\numberwithin{equation}{section}
\newtheoremstyle{thmlemcorr}{10pt}{10pt}{\itshape}{}{\bfseries}{.}{10pt}{{\thmname{#1}\thmnumber{ #2}\thmnote{ (#3)}}}
\newtheoremstyle{thmlemcorr*}{10pt}{10pt}{\itshape}{}{\bfseries}{.}\newline{{\thmname{#1}\thmnumber{ #2}\thmnote{ (#3)}}}
\newtheoremstyle{defi}{10pt}{10pt}{\itshape}{}{\bfseries}{.}{10pt}{{\thmname{#1}\thmnumber{ #2}\thmnote{ (#3)}}}
\newtheoremstyle{remexample}{10pt}{10pt}{}{}{\bfseries}{.}{10pt}{{\thmname{#1}\thmnumber{ #2}\thmnote{ (#3)}}}
\newtheoremstyle{ass}{10pt}{10pt}{}{}{\bfseries}{.}{10pt}{{\thmname{#1}\thmnumber{ A#2}\thmnote{ (#3)}}}
\theoremstyle{thmlemcorr}
\newtheorem{theorem}{Theorem}
\numberwithin{theorem}{section}
\newtheorem{lemma}[theorem]{Lemma}
\newtheorem{corollary}[theorem]{Corollary}
\newtheorem{proposition}[theorem]{Proposition}
\theoremstyle{thmlemcorr*}
\newtheorem{theorem*}{Theorem}
\newtheorem{lemma*}[theorem]{Lemma}
\newtheorem{corollary*}[theorem]{Corollary}
\newtheorem{proposition*}[theorem]{Proposition}
\newtheorem{problem*}[theorem]{Problem}
\newtheorem{conjecture*}[theorem]{Conjecture}
\theoremstyle{defi}
\theoremstyle{remexample}
\newtheorem{remark}[theorem]{Remark}
\theoremstyle{ass}
\newcommand{\Crm}{\mathrm{C}}
\newcommand{\Lrm}{\mathrm{L}}
\newcommand{\Wrm}{\mathrm{W}}
\newcommand{\Fcal}{\mathcal{F}}
\newcommand{\Hcal}{\mathcal{H}}
\newcommand{\Lcal}{\mathcal{L}}
\newcommand{\Ebf}{\mathbf{E}}
\newcommand{\Mbf}{\mathbf{M}}
\newcommand{\Ybf}{\mathbf{Y}}
\renewcommand{\Bbb}{\mathbb{B}}
\newcommand{\Rbb}{\mathbb{R}}
\newcommand{\Sbb}{\mathbb{S}}
\DeclareMathOperator{\id}{id}
\DeclareMathOperator*{\wslim}{w*-lim}
\DeclareMathOperator{\dist}{dist}
\DeclareMathOperator{\rk}{rank}
\DeclareMathOperator{\supp}{supp}
\newcommand{\setn}[2]{\{\, #1 \ \ \textup{\textbf{:}}\ \ #2 \,\}}
\newcommand{\setb}[2]{\bigl\{\, #1 \ \ \textup{\textbf{:}}\ \ #2 \,\bigr\}}
\newcommand{\norm}[1]{\|#1\|}
\newcommand{\abs}[1]{|#1|}
\newcommand{\absn}[1]{|#1|}
\newcommand{\absb}[1]{\bigl|#1\bigr|}
\newcommand{\absBB}[1]{\biggl|#1\biggr|}
\newcommand{\absBBB}[1]{\Biggl|#1\Biggr|}
\newcommand{\floor}[1]{\lfloor #1 \rfloor}
\newcommand{\floorBB}[1]{\biggl\lfloor #1\biggr\rfloor}
\newcommand{\dpr}[1]{\langle #1 \rangle}	
\newcommand{\dprn}[1]{\langle #1 \rangle}
\newcommand{\dprb}[1]{\bigl\langle #1 \bigr\rangle}
\newcommand{\ddpr}[1]{\langle\!\langle #1 \rangle\!\rangle}
\newcommand{\ddprn}[1]{\langle\!\langle #1 \rangle\!\rangle}
\newcommand{\ddprb}[1]{\bigl\langle\hspace{-2.5pt}\bigl\langle #1 \bigr\rangle\hspace{-2.5pt}\bigr\rangle}
\newcommand{\ddprB}[1]{\Bigl\langle\!\!\Bigl\langle #1 \Bigr\rangle\!\!\Bigr\rangle}
\newcommand{\cl}[1]{\overline{#1}}
\newcommand{\di}{\mathrm{d}}
\newcommand{\dd}{\;\mathrm{d}}
\newcommand{\N}{\mathbb{N}}
\newcommand{\R}{\mathbb{R}}
\newcommand{\ONE}{\mathbbm{1}}
\newcommand{\toweakstar}{\overset{*}\rightharpoondown}
\newcommand{\todown}{\downarrow}
\newcommand{\toY}{\overset{\Ybf}{\to}}
\newcommand{\sbullet}{\begin{picture}(1,1)(-0.5,-2)\circle*{2}\end{picture}}
\newcommand{\frarg}{\,\sbullet\,}
\newcommand{\BV}{\mathrm{BV}}
\newcommand{\const}{\mathrm{const}}
\newcommand{\GYbf}{\mathbf{GY}}
\DeclareMathOperator{\Tan}{Tan}
\newcommand{\term}[1]{\textbf{#1}}
\newcommand{\proofstep}[1]{\textit{#1}}
\def\Xint#1{\mathchoice 
{\XXint\displaystyle\textstyle{#1}}%
{\XXint\textstyle\scriptstyle{#1}}%
{\XXint\scriptstyle\scriptscriptstyle{#1}}%
{\XXint\scriptscriptstyle\scriptscriptstyle{#1}}%
\!\int} 
\def\XXint#1#2#3{{\setbox0=\hbox{$#1{#2#3}{\int}$} 
\vcenter{\hbox{$#2#3$}}\kern-.5\wd0}} 
\def\dashint{\,\Xint-}
\newcommand{\restrict}{\begin{picture}(10,8)\put(2,0){\line(0,1){7}}\put(1.8,0){\line(1,0){7}}\end{picture}}
\renewcommand{\rho}{\varrho}
\renewcommand{\epsilon}{\varepsilon}
\renewcommand{\phi}{\varphi}
\begin{document}

\title[BV-Lower semicontinuity without Alberti's Theorem]{Lower semicontinuity and Young measures in BV without Alberti's Rank-One Theorem}

\author{Filip Rindler}
\address{Mathematical Institute, University of Oxford, 24--29 St Giles', Oxford OX1 3LB, United Kingdom}
\email{rindler@maths.ox.ac.uk}

\begin{abstract}
We give a new proof of sequential weak* lower semicontinuity in $\BV(\Omega;\R^m)$ for integral functionals of the form
\begin{align*}
  \Fcal(u) := &\int_\Omega f( x, \nabla u) \dd x
  + \int_\Omega f^\infty \Bigl( x, \frac{\di D^s u}{\di \abs{D^s u}} \Bigr) \dd \abs{D^s u} \\
  &\qquad + \int_{\partial\Omega} f^\infty \bigl( x, u \otimes n_{\Omega} \bigr) \dd \Hcal^{d-1},
  \qquad u \in \BV(\Omega;\R^m),
\end{align*}
where $f \colon \cl{\Omega} \times \R^{m \times d} \to \R$ is a quasiconvex Carath\'{e}odory integrand with linear growth at infinity, i.e.\ $\abs{f(x,A)} \leq M(1+\abs{A})$ for some $M \geq 0$, and such that the recession function $f^\infty(x,A) := \lim_{x'\to x,t\to\infty} t^{-1}f(x',tA)$ exists and is (jointly) continuous. In contrast to the classical proofs by Ambrosio \& Dal Maso [\textit{J.\ Funct.\ Anal.} 109 (1992), 76--97] and Fonseca \& M\"{u}ller [\textit{Arch.\ Ration.\ Mech.\ Anal.} 123 (1993), 1--49], we do not use Alberti's Rank-One Theorem [\textit{Proc.\ Roy.\ Soc.\ Edinburgh Sect.\ A} 123 (1993), 239--274], but a rigidity result for gradients. The proof is set in the framework of generalized Young measures and proceeds via establishing Jensen-type inequalities for regular and singular points of $Du$.

\vspace{8pt}

\noindent\textsc{MSC (2010):} 49J45 (primary); 26B30, 28B05.

\noindent\textsc{Keywords:} BV, lower semicontinuity, Alberti's Rank-One Theorem, rigidity, Young measure, differential inclusion.

\vspace{8pt}

\noindent\textsc{Date:} \today.
\end{abstract}

\maketitle

\section{Introduction}

The aim of this work is to give a new proof of a classical lower semicontinuity theorem for integral functionals on the space $\BV(\Omega;\R^m)$ of vector-valued functions of bounded variation:

\begin{theorem} \label{thm:BV_lsc_teaser}
Let $\Omega \subset \R^d$ be a bounded Lipschitz domain with boundary unit inner normal $n_{\Omega} \colon \partial \Omega \to \Sbb^{d-1}$. Further, let $f \colon \cl{\Omega} \times \R^{m \times d} \to \R$ be a Carath\'{e}odory integrand with linear growth at infinity, i.e.\ $\abs{f(x,A)} \leq M(1+\abs{A})$ for some constant $M \geq 0$, that is quasiconvex in its second argument and for which the recession function
\[
  f^\infty(x,A) := \lim_{\substack{\!\!\!\! x' \to x \\ \; t \to \infty}}
    \frac{f(x',tA)}{t},  \qquad \text{$x \in \cl{\Omega}$, $A \in \R^{m \times d}$,}
\]
exists and is (jointly) continuous. Then, the functional
\begin{align*}
  \Fcal(u) := &\int_\Omega f( x, \nabla u) \dd x
    + \int_\Omega f^\infty \Bigl( x, \frac{\di D^s u}{\di \abs{D^s u}} \Bigr)
    \dd \abs{D^s u} \\
  &\qquad + \int_{\partial\Omega} f^\infty \bigl( x, u|_{\partial \Omega}
    \otimes n_{\Omega} \bigr) \dd \Hcal^{d-1},
    \qquad u \in \BV(\Omega;\R^m),
\end{align*}
where $u|_{\partial \Omega} \in \Lrm^1(\partial \Omega,\Hcal^{d-1};\R^m)$ is the inner boundary trace of $u$ on $\partial \Omega$, is sequentially lower semicontinuous with respect to weak* convergence in the space $\BV(\Omega;\R^m)$.
\end{theorem}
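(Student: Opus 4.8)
The plan is to recast the statement in the framework of generalized Young measures and to derive the lower bound from a family of local Jensen-type inequalities, one for each part of the Lebesgue--Radon--Nikod\'ym decomposition of $Du$; the role classically played by Alberti's Rank-One Theorem will be taken over by a rigidity result for blow-ups of $\BV$-gradients, whose proof should be elementary precisely because localization turns the polar of $D^su$ into a fixed direction. Concretely, let $u_j \toweakstar u$ in $\BV(\Omega;\R^m)$; after passing to a subsequence (not relabelled) and using the Lipschitz character of $\Omega$ to account for boundary concentrations, we may assume that the derivatives $Du_j$ generate a generalized Young measure $\nu = (\nu_x,\lambda_\nu,\nu_x^\infty)_{x\in\cl{\Omega}}$ on $\cl{\Omega}$, with barycenter $[\nu] = Du + \bigl(u|_{\partial\Omega}\otimes n_{\Omega}\bigr)\Hcal^{d-1}\restrict\partial\Omega$, regarded as a measure on $\cl{\Omega}$. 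Since $f$ is a Carath\'eodory integrand with linear growth whose recession function $f^\infty$ exists and is jointly continuous, the Young-measure functional associated with $f$ is lower semicontinuous under generation, so that
\[
  \liminf_{j\to\infty}\Fcal(u_j) \;\geq\; \ddpr{f,\nu} := \int_{\cl{\Omega}} \dpr{f(x,\frarg),\nu_x}\dd x + \int_{\cl{\Omega}} \dpr{f^\infty(x,\frarg),\nu_x^\infty}\dd\lambda_\nu(x),
\]
and it remains to prove $\ddpr{f,\nu} \geq \Fcal(u)$.

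To this end write $\lambda_\nu = a\,\Lcal^d + \lambda_\nu^s$ with $\lambda_\nu^s\perp\Lcal^d$. Since $\Lcal^d\restrict\Omega$, $\abs{D^su}$ and $\Hcal^{d-1}\restrict\partial\Omega$ are mutually singular, $\ddpr{f,\nu}\geq\Fcal(u)$ reduces, by integration against the respective measures, to three pointwise bounds. \emph{(i) Regular points:} for $\Lcal^d$-a.e.\ $x_0\in\Omega$,
\[
  \dpr{f(x_0,\frarg),\nu_{x_0}} + a(x_0)\,\dpr{f^\infty(x_0,\frarg),\nu_{x_0}^\infty} \;\geq\; f\bigl(x_0,\nabla u(x_0)\bigr).
\]
\emph{(ii) Singular interior points:} for $\abs{D^su}$-a.e.\ $x_0\in\Omega$,
\[
  \frac{\di\lambda_\nu}{\di\abs{D^su}}(x_0)\,\dpr{f^\infty(x_0,\frarg),\nu_{x_0}^\infty} \;\geq\; f^\infty\!\Bigl(x_0,\tfrac{\di D^su}{\di\abs{D^su}}(x_0)\Bigr).
\]
\emph{(iii) Boundary points:} the analogue of (ii) on $\partial\Omega$, with $u|_{\partial\Omega}\otimes n_{\Omega}$ in place of the polar of $D^su$. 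Each of (i)--(iii) is established by blow-up: at a point $x_0$ that is simultaneously a Lebesgue point of the relevant densities and a point at which the corresponding tangent measure of $Du$ exists, the rescaled and renormalized Young measures converge to a tangent generalized Young measure $\sigma$ on the unit ball that is again generated by $\BV$-gradients and is homogeneous in the averaged sense, with barycenter the constant $\nabla u(x_0)$ in case (i) and $P_0\,\tau_0$ (where $P_0 := \tfrac{\di D^su}{\di\abs{D^su}}(x_0)$ and $\tau_0$ is a positive, self-similar measure) in case (ii). Case (i) is then the by now classical argument: quasiconvexity of $f(x_0,\frarg)$ yields a Jensen inequality for the absolutely continuous part of $\sigma$, while quasiconvexity of $f^\infty(x_0,\frarg)$ absorbs the concentration part, and no rank-one information is used.

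The new ingredient enters in case (ii). The obstruction is that $f^\infty(x_0,\frarg)$ is merely quasiconvex, so Jensen's inequality at its barycenter $P_0$ is not automatic unless $P_0$ is rank-one — which classically is supplied by Alberti's theorem. Instead one invokes the rigidity result for gradients: a homogeneous $\BV$-gradient Young measure whose barycenter has a nontrivial singular part with \emph{constant} polar $P_0$ must, morally, be supported in the recession variable along the rank-one cone through $P_0$ (in particular $\rk P_0 = 1$) — this is where the paper does work that would otherwise be outsourced to Alberti, but which is now tractable directly because localization has made the polar direction constant. Granting it, case (ii) collapses to a one-dimensional Jensen inequality for $t\mapsto f^\infty(x_0,A+tP_0)$, valid because quasiconvexity implies rank-one convexity, i.e.\ convexity along this line. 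Case (iii) is reduced to (ii) by an even reflection of $\Omega$ across a flat portion of $\partial\Omega$, which turns the boundary term into an interior one of the same type, after which the same blow-up and rigidity argument applies.

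The main obstacle is case (ii)/(iii): one must verify that the tangent Young measure at an $\abs{D^su}$- or $\Hcal^{d-1}\restrict\partial\Omega$-singular point is genuinely generated by $\BV$-gradients and carries a \emph{constant} polar barycenter, and then prove the rigidity statement in that setting — the constant-direction case of a rank-one phenomenon, which should be substantially more elementary than the full Rank-One Theorem but still requires a genuine argument (a Liouville/rigidity-type computation for gradients). A secondary, bookkeeping-type difficulty is ensuring that the localized inequalities (i)--(iii) reassemble exactly into $\Fcal(u)$, with the interior Cantor, interior jump, and boundary contributions handled simultaneously inside the single Young measure $\nu$ on $\cl{\Omega}$.
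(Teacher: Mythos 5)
Your overall architecture (generate a Young measure on a neighbourhood of $\cl{\Omega}$ whose barycenter is $Du + (u|_{\partial\Omega}\otimes n_{\Omega})\,\Hcal^{d-1}\restrict\partial\Omega$, reduce to Jensen-type inequalities at regular and singular points, and use rigidity where the classical proof uses Alberti) is the paper's, and your case (i) and the reassembly step are fine. The genuine gap is in case (ii). You propose to use the rigidity result to conclude that the singular tangent Young measure is supported \enquote{along the rank-one cone through $P_0$ (in particular $\rk P_0 = 1$)} and then to finish with a one-dimensional Jensen inequality along that line. Neither half works. Rigidity cannot deliver $\rk P_0 \leq 1$: the Rigidity Lemma only says that \emph{if} $Dv = P\abs{Dv}$ with $\rk P \geq 2$ then $v$ is affine --- it does not exclude that case, and no blow-up argument can, since Preiss's Example 5.9(1) gives a purely singular measure all of whose tangent measures are multiples of $\Lcal^d$, so a blow-up at a singular point can be indistinguishable from a regular one. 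Concluding $\rk P_0 \leq 1$ at $\abs{D^s u}$-a.e.\ point \emph{is} (a weak form of) Alberti's theorem, i.e.\ exactly what is being avoided. The correct use of rigidity is a dichotomy: if $\rk P_0 \geq 2$ the blow-up limit is affine and quasiconvexity is applied with affine boundary data exactly as at a regular point (no rank-one information needed); if $P_0 = a\otimes\xi$ the blow-up limit is one-directional, $v(x) = \psi(x\cdot\xi)a$, and one runs the classical averaging construction before invoking quasiconvexity.

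Moreover, even granting $P_0 = a\otimes\xi$, the target inequality $g(\dprb{\id,\nu_{x_0}^\infty}) \leq \dprb{g,\nu_{x_0}^\infty}$ does not collapse to convexity of $t\mapsto g(A + tP_0)$: the measure $\nu_{x_0}^\infty$ is a general probability measure on $\partial\Bbb^{m\times d}$, not one supported on the line $\R P_0$ (oscillating concentration directions are perfectly admissible), so rank-one convexity along a single line gives nothing. One must apply full quasiconvexity to a generating sequence of the tangent Young measure after modifying it --- periodic replication in the $\xi$-direction with a compensating staircase term to kill the gluing jumps, followed by a boundary adjustment to the affine datum $A_0 x$ --- so that the averaged sequence still generates the same action on positively $1$-homogeneous integrands. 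Your boundary case (iii) by even reflection is also more delicate than necessary (the Lipschitz boundary is not flat); extending $u_j, u$ by zero to a larger domain $\Omega'$ and extending $f$ positively $1$-homogeneously off $\cl{\Omega}$ converts the trace term into an interior jump part of the extended derivative, whose polar is automatically of rank one, so that case (ii) already covers it.
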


This result was first established by Ambrosio \& Dal Maso~\cite{AmbDal92RBVQ} and Fonseca \& M\"{u}ller~\cite{FonMul93RQFB}, also see~\cite{FonMul92QCIL}, which introduced the blow-up method employed in the proof, and~\cite{KriRin10RSIF} for the recent extension to signed integrands. Notice, however, that in the result above we need a stronger notion of recession function (not just the limes superior, but a proper limit); this phenomenon will be explained in Remark~\ref{rem:rec_funct}. An analogous lower semicontinuity theorem for symmetric-quasiconvex integral functionals with linear growth on the space BD of functions of bounded deformation has recently been proved by the author by employing a similar, yet more refined, strategy as in this paper, see~\cite{Rind10?LSIF}.

Traditionally, the proof of the above lower semicontinuity theorem crucially employs Alberti's Rank-One Theorem~\cite{Albe93ROPD}, which confirmed a conjecture of Ambrosio \& De Giorgi~\cite{AmbDeG88NTFC} and asserts that for a function $u \in \BV(\Omega;\R^m)$,
\[
  \rk \biggl( \frac{\di D^s u}{\di \abs{D^s u}}(x_0) \biggr) \leq 1
  \qquad\text{for $\abs{D^s u}$-almost every $x_0 \in \Omega$.}
\]
Here, $\frac{\di D^s u}{\di \abs{D^s u}}$ denotes the Radon--Nikod\'{y}m derivative of $D^s u$ with respect to the corresponding total variation measure $\abs{D^s u}$ (this particular density is sometimes called the \enquote{polar}). The proof of Alberti's Rank-One Theorem is rather involved, despite some recent efforts of simplification~\cite{DeLe09NARO} (there also is an announcement of a new proof in~\cite{AlCsPr05SNPA}). The main difficulty lies in the fact that it cannot be proved by the usual blow-up arguments, but requires a more sophisticated \enquote{decomposition} approach together with a clever use of the $\BV$-coarea formula.

This work will give a proof of the $\BV$-Lower Semicontinuity Theorem~\ref{thm:BV_lsc_teaser} that does not use Alberti's Rank-One Theorem, but instead combines the usual blow-up arguments with a rigidity lemma. \enquote{Rigidity} here means that all (exact) solutions to certain differential inclusions involving the gradient have additional structure. The decisive point is to realize that in the currently known proof of the lower semicontinuity theorem, Alberti's Theorem is employed only as a rigidity result: Blowing-up a function $u \in \BV(\Omega;\R^m)$ around a singular point $x_0 \in \Omega$ yields a $\BV$-function $v$ with constant polar function of the derivative, i.e.\
\[
  Dv = P(x_0) \abs{Dv}, \qquad\text{where}\qquad P(x_0) = \frac{\di D^s u}{\di \abs{D^s u}}(x_0).
\]
Alberti's Theorem now tells us that for $\abs{D^s u}$-almost every $x_0 \in \Omega$, $P(x_0) = a \otimes \xi$ for some $a \in \R^m$, $\xi \in \Sbb^{d-1}$, and hence we may infer that $v$ can be written as
\begin{equation} \label{eq:v_one_dir}
  v(x) = v_0 + \psi(x \cdot \xi)a \qquad\text{for some $\psi \in \BV(\R)$, $v_0 \in \R^m$.}
\end{equation}

The key observation in this paper is that a weaker statement can be proved much more easily: If $v \in \BV(C;\R^m)$, $C \subset \R^d$ an open convex set, satisfies
\[
  Dv = P \abs{Dv}  \qquad\text{where $P \in \R^{m \times d}$ with $\rk P \leq 1$},
\]
then again~\eqref{eq:v_one_dir} holds (with $P = a \otimes \xi$ and for $x \in C$), whereas if $\rk P \geq 2$, then $v$ must even be affine. This rigidity result traces its origins to Hadamard's jump condition and Proposition~2 in~\cite{BalJam87FPMM}. For our purposes, however, we need a stronger statement than in the latter reference, but the proof is still elementary and based only on the fact that $\BV$-derivatives must be curl-free, which translates into an algebraic condition on $P$, and the fact that gradients are always othogonal to level sets. To the best of the author's knowledge, rigidity results seem not to have been employed explicitly in lower semicontinuity theory before (except the aforementioned use of Alberti's Theorem of course).

For a sequence $u_j \toweakstar u$ in the $\BV$-Lower Semicontinuity Theorem, we distinguish several different types of blow-up, depending on whether $x_0$ is a regular point for $Du$, or a singular point (see Section~\ref{sc:setup} for definitions), and in the latter case also depending on whether $\rk P(x_0) \leq 1$ or $\rk P(x_0) \geq 2$ for the matrix $P(x_0) := \frac{D^s u}{\abs{D^s u}}(x_0)$. At regular points ($\Lcal^d$-a.e.), we have a \enquote{regular blow-up}, that is an affine blow-up limit, and we can apply quasiconvexity directly. At singular points $x_0 \in \Omega$ with $\rk P(x_0) \leq 1$ we have a \enquote{fully singular blow-up}, meaning that we get a one-directional function in the blow-up limit and we need an averaging procedure before we can apply quasiconvexity (just as in the usual proof). If $\rk P(x_0) \geq 2$, we call this a \enquote{semi-regular blow-up}, we get an affine function in the blow-up again, which can then be treated by just slightly adapting the procedure for regular blow-ups. Of course, from Alberti's Theorem we know that this case occurs only on a $\abs{D^s u}$-negligible set, but the main objective of this work is to avoid using this result.

Our proof is set in the theory of generalized Young measures as introduced by DiPerna \& Majda~\cite{DiPMaj87OCWS} and further developed by Alibert \& Bouchitt\'{e}~\cite{AliBou97NUIG} and others~\cite{KruRou97MDM,Roub97ROTV,KriRin10CGGY}. We follow the framework as presented in~\cite{KriRin10CGGY}. The main reason for choosing this Young measure approach is that it provides a very conceptual and clean organization of the lower semicontinuity proof (and only through this point of view it became apparent to the author how to argue without Alberti's Theorem). In fact, we prove Jensen-type inequalities for the regular and the singular part of a generalized Young measure generated by a sequence of $\BV$-derivatives and then deduce lower semicontinuity from that.

It should be remarked that for the present result it is possible to circumvent the use of Young measures altogether by simply substituting our Rigidity Lemma~\ref{lem:BV_rigidity} in place of Alberti's Theorem in the classical proof (see Remark~\ref{rem:YM_free_lsc} for more details). However, while the Young measure approach requires a few technical results, it obviates the need to use certain other measure-theoretic arguments (like the De Giorgi--Letta Theorem). Besides, a secondary aim of this work is to showcase this Young measure approach, since it is also useful for proving \emph{new} lower semicontinuity results; for instance, the recent proof of lower semicontinuity for symmetric-quasiconvex integral functionals with linear growth in the space BD of functions of bounded deformation~\cite{Rind10?LSIF} relies substantially on the theory of generalized Young measures.

As a noteworthy technical tool we introduce tangent Young measures, which complement classical tangent measures in blow-up arguments involving Young measures. They retain the good compactness properties of weak*-convergence, but contain much more information about the blow-up sequence. Tangent Young measures allow us to formulate Localization Principles for Young measures at regular and singular points. Moreover, we provide a slightly stronger version (and a different proof) of a lemma on strictly converging blow-up sequences, first noticed by Larsen, see Lemma~5.1 of~\cite{Lars98QWOJ}, which allows to shorten the blow-up argument.

The paper is organized as follows: We collect preliminaries and notation in Section~\ref{sc:setup}. Section~\ref{sc:rigidity} recalls basic facts on tangent measures, proves the Strict Blow-up Lemma and then exhibits global and local versions of the key rigidity result. The Localization Principles for Young measures are the topic of Section~\ref{sc:localization}, and, finally, Section~\ref{sc:Jensen} shows the Jensen-type inequalities and deduces the $\BV$-Lower Semicontinuity Theorem~\ref{thm:BV_lsc_teaser} and related results from them.

\section{Setup}  \label{sc:setup}

\subsection{Notation} \label{ssc:general}

By $\Bbb^d$ we denote the open unit ball in $\R^d$ and also set $B(x_0,r) := x_0 + r\Bbb^d$; $\Sbb^{d-1} := \partial \Bbb^d$ is the unit sphere. Generically, $\Omega$ is an open, bounded set in $\R^d$ with Lipschitz boundary. We equip the space $\R^{m \times n}$ of $(m \times n)$-dimensional matrices with the Frobenius norm $\abs{A} := \sqrt{\sum_{i,j} (A_j^i)^2} = \sqrt{\mathrm{trace}(A^T A)}$ (the Euclidean norm in $\R^{mn}$), where $A_j^i$ denotes the entry of $A$ in the $i$th row and $j$th column. The \term{tensor product} between two vectors $a \in \R^m$, $b \in \R^d$ is $a \otimes b := ab^T \in \R^{m \times d}$. For a set $A$, $\ONE_A$ is its \term{characteristic function}, while we use $\ONE$ to denote the function, which is constant and equal to $1$ everywhere.

The space $\Mbf(\R^d;\R^{m \times d})$ contains all \term{finite (Radon) measures} on the Borel $\sigma$-algebra of $\R^d$ with values in $\R^{m \times d}$. Analogously, define $\Mbf(A;\R^{m \times d})$ for a Borel set $A \subset \R^d$. For every measure $\mu \in \Mbf(\R^d;\R^{m \times d})$, we denote by $\abs{\mu} \in \Mbf(\R^d)$ its \term{total variation measure}. We write $\dpr{h,\mu} := \int h \cdot \di \mu$ for any Borel measurable $h \colon \R^d \to \R^{m \times d}$. The symbols $\Lcal^d$ and $\Hcal^k$ denote the $d$-dimensional Lebesgue measure and the $k$-dimensional Hausdorff measure ($0 \leq k < \infty$), respectively. The restriction $\mu \restrict B$ for a measure $\Mbf(A;\R^{m \times d})$ and a Borel set $B \subset A$ is defined via $(\mu \restrict B)(C) := \mu(C \cap B)$ for every Borel set $C \subset A$.

Every measure $\mu \in \Mbf(\R^d;\R^{m \times d})$ has a \term{Lebesgue--Radon--Nikod\'{y}m decomposition}
\[
  \mu = \mu^a + \mu^s = \frac{\di \mu}{\di \Lcal^d} \Lcal^d + \frac{\di \mu^s}{\di \abs{\mu^s}} \abs{\mu^s},
\]
the function $\frac{\di \mu^s}{\di \abs{\mu^s}} \in \Lrm^1(\R^d,\abs{\mu^s};\partial \Bbb^{m \times d})$ is also referred to as the \term{polar function} of $\mu^s$. Here, the space $\Lrm^1(\R^d,\abs{\mu^s};\partial \Bbb^{m \times d})$ contains all $\abs{\mu^s}$-integrable functions with values in the unit sphere of $\R^{m \times d}$. More on these notions can for example be found in~\cite{AmFuPa00FBVF,FonLeo07MMCV}.

We call $x_0 \in \supp \mu$ a \term{regular point} if the \term{Radon--Nikod\'{y}m derivative} $\frac{\di \mu}{\di \Lcal^d}(x_0)$ exists as the limit
\[
  \frac{\di \mu}{\di \Lcal^d}(x_0) =
  \lim_{r \todown 0} \frac{\mu(B(x_0,r))}{\abs{B(x_0,r)}} \quad \in \R^{m \times d},
\]
where $\abs{B(x_0,r)} = \Lcal^d(B(x_0,r))$ is the $d$-dimensional Lebesgue measure of the ball $B(x_0,r)$. It is well-known that $\Lcal^d$-almost all $x_0 \in \supp \mu$ are regular points, the other points are called \term{singular points}.

On several occasions we will employ the \term{pushforward} measure $T_*^{(x_0,r)} \mu := \mu \circ (T^{(x_0,r)})^{-1} \in \Mbf(\R^d;\R^{m \times d})$ of a finite Radon measure $\mu \in \Mbf(\R^d;\R^{m \times d})$ under the affine mapping $T^{(x_0,r)}(x) := (x-x_0)/r$, where $x_0 \in \R^d$ and $r > 0$. For a (measurable) function $g \colon \R^d \to \R$ we have the transformation formula
\[
  \int g \dd (T_*^{(x_0,r)}\mu) = \int g \circ T^{(x_0,r)} \dd \mu
  = \int g \Bigl(\frac{x-x_0}{r}\Bigr) \dd \mu(x),
\]
provided one, hence all, of these integrals are defined.

Besides the usual \term{weak* convergence} $\mu_j \toweakstar \mu$ of a sequence of measures $(\mu_j) \subset \Mbf(\R^d;\R^{m \times d})$, we also use the \term{strict convergence} where in addition to weak* convergence $\mu_j \toweakstar \mu$ we also assume $\abs{\mu_j}(\R^d) \to \abs{\mu}(\R^d)$ (or $\abs{\mu_j}(A) \to \abs{\mu}(A)$ if we consider measures on a Borel set $A$).

By $\BV(\Omega;\R^m)$ we denote the space of \term{functions of bounded variation}, i.e.\ the space of functions $u \in \Lrm^1(\Omega;\R^m)$ such that the distributional derivative $Du$ is (representable as) a finite matrix-valued Radon measure, $Du \in \Mbf(\Omega;\R^{m \times d})$. We write its Lebesgue--Radon--Nikod\'{y}m decomposition as $Du = \nabla u \Lcal^d + D^s u$ and call $\nabla u$ the \term{approximate gradient} (more precisely, the Lebesgue-density coincides a.e.\ with the approximate gradient, which is defined in a pointwise fashion), while $D^s u$ is the \term{singular part} of the derivative. We use the weak* and the strict convergence in $\BV(\Omega;\R^m)$, which correspond to $\Lrm^1$-convergence together with respectively weak* or strict convergence of the derivatives. Finally, each function $u \in \BV(\Omega;\R^m)$, $\Omega \subset \R^d$ an open bounded Lipschitz domain as usual, has a boundary trace $u|_{\partial \Omega} \in \Lrm^1(\partial \Omega, \Hcal^{d-1} \restrict \partial \Omega;\R^m)$ and the trace operator $u \mapsto u|_{\partial \Omega}$ is strictly, but not weakly* continuous. A thorough introduction to the space $\BV(\Omega;\R^m)$ is given in~\cite{AmFuPa00FBVF}.

\subsection{Integrands}

For $f \colon \Omega \times \R^{m \times d} \to \R$ with \term{linear growth at infinity}, that is $\abs{f(x,A)} \leq M(1+\abs{A})$ for some constant $M \geq 0$ and all $x \in \Omega$, $A \in \R^{m \times d}$, define the transformation
\[
  (Sf)(x,\hat{A}) := (1-\abs{\hat{A}})f \Bigl( x, \frac{\hat{A}}{1-\abs{\hat{A}}}
  \Bigr),  \qquad\text{$x \in \Omega$, $\hat{A} \in \Bbb^{m \times d}$,}
\]
where $\Bbb^{m \times d}$ is the unit ball in $\Rbb^{m \times d}$. Then $Sf \colon \Omega \times \Bbb^{m \times d} \to \R$, and we let $\Ebf(\Omega;\R^{m \times d})$ be the space of all $f \in \Crm(\Omega \times \R^{m \times d})$ such that $Sf$ extends into a bounded, continuous function on $\cl{\Omega \times \Bbb^{m \times d}}$ (which is equivalent to $Sf$ being uniformly continuous on $\Omega \times \Bbb^{m \times d}$). We norm this space by
\[
  \norm{f}_{\Ebf(\Omega;\R^{m \times d})} := \sup_{(x,\hat{A}) \in \Omega \times \Bbb^{m \times d}} \absb{Sf(x,\hat{A})},
  \qquad f \in \Ebf(\Omega;\R^{m \times d}).
\]
From the definition we get that for each $f \in \Ebf(\Omega;\R^{m \times d})$ the limit
\begin{equation} \label{eq:f_infty}
  f^\infty(x,A) := \lim_{\substack{\!\!\!\! x' \to x \\ \!\!\!\! A' \to A \\ \; t \to \infty}}
    \frac{f(x',tA')}{t},  \qquad \text{$x \in \cl{\Omega}$, $A \in \R^{m \times d}$,}
\end{equation}
exists and defines a positively $1$-homogeneous function (i.e.\ $f(x,\theta A) = \theta f(x,A)$ for all $\theta \geq 0$, $x \in \cl{\Omega}$, $A \in \R^{m \times d}$). This function $f^\infty$ is called the \term{(strong) recession function} of $f$.

If we only have $h \in \Crm(\R^{m \times d})$ with linear growth at infinity, the recession function $h^\infty$ does not necessarily exist (not even for \emph{quasiconvex} $h \in \Crm(\R^{m \times d})$, see Theorem~2 of~\cite{Mull92QFHD}). But for such functions $h$ we can always define the \term{generalized recession function} $h^\# \colon \R^{m \times d} \to \R$ by
\[
  h^\#(A) := \limsup_{\substack{\!\!\!\! A' \to A \\ \; t \to \infty}} \frac{h(tA')}{t},  \qquad A \in \R^{m \times d},
\]
which again is always positively $1$-homogeneous ($h^\#$ is usually just called the \enquote{recession function} in other works, but here the distinction is important). We refer to Section~2.5 of~\cite{AusTeb03ACFO} for a more systematic approach to recession functions and their associated cones.

If $f,h$ are Lipschitz continuous, then the definitions of $f^\infty$ and $h^\#$ simplify to
\begin{equation}  \label{eq:rec_funct_simpler}
\begin{aligned}
  f^\infty(x,A) &= \lim_{\substack{\!\!\!\! x' \to x \\ \; t \to \infty}}
   \frac{f(x',tA)}{t}, \qquad \text{$x \in \cl{\Omega}$, $A \in \R^{m \times d}$,} \\
  h^\#(A) &= \limsup_{t \to \infty} \frac{h(tA)}{t},  \qquad A \in \R^{m \times d}.
\end{aligned}
\end{equation}
It is elementary to show that $h^\#$ is always upper semicontinuous.

We will also need the following approximation lemma:

\begin{lemma} \label{lem:integrand_approx}
For every upper semicontinuous function $h \colon \R^{m \times d} \to \R$ with linear growth at infinity, there exists a decreasing sequence $(\ONE \otimes h_k) \subset \Ebf(\Omega;\R^{m \times d})$ with
\[
  \inf_{k \in \N} h_k = \lim_{k \to \infty} h_k = h, \qquad
  \inf_{k \in \N} h_k^\infty = \lim_{k \to \infty} h_k^\infty = h^\#
  \qquad\text{(pointwise).}
\]
Furthermore, the linear growth constants of the $h_k$ can be chosen to be bounded by the linear growth constant of $h$.
\end{lemma}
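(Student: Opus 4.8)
The plan is to push the whole statement onto the open unit ball $\Bbb^{m\times d}$ through the transformation $S$ and there invoke the elementary fact that every bounded upper semicontinuous function on a compact metric space is the pointwise non-increasing limit of a sequence of Lipschitz functions. Concretely, since $h$ has linear growth with some constant $M$, i.e.\ $\abs{h(A)} \le M(1+\abs{A})$, the function $g := Sh \colon \Bbb^{m\times d} \to \R$, $g(\hat A) = (1-\abs{\hat A})\,h\bigl(\hat A/(1-\abs{\hat A})\bigr)$, satisfies $\abs{g} \le M$ throughout $\Bbb^{m\times d}$ (a one-line computation), and likewise $\abs{h^\#(\beta)} \le M$ for all $\beta \in \partial \Bbb^{m\times d}$. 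Moreover $g$ is upper semicontinuous on the open ball, being the composition of the u.s.c.\ function $h$ with the homeomorphism $\hat A \mapsto \hat A/(1-\abs{\hat A})$, times the positive continuous factor $1-\abs{\hat A}$.

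Next I would extend $g$ to a function $G \colon \cl{\Bbb^{m\times d}} \to \R$ by declaring $G := g$ on $\Bbb^{m\times d}$ and $G(\beta) := h^\#(\beta)$ for $\beta \in \partial \Bbb^{m\times d}$, and check that $G$ is upper semicontinuous on the \emph{compact} set $\cl{\Bbb^{m\times d}}$. On the sphere this is the upper semicontinuity of $h^\#$ (recalled in the excerpt); and for $\hat A \to \beta \in \partial\Bbb^{m\times d}$ from inside the ball one writes $g(\hat A) = \abs{\hat A}\,h\bigl(s\,\hat A/\abs{\hat A}\bigr)\big/s$ with $s := \abs{\hat A}/(1-\abs{\hat A}) \to \infty$, so that, using $\abs{\hat A} \to 1$ and the boundedness of $h(\,\cdot\,)/(\,\cdot\,)$,
\[
  \limsup_{\hat A \to \beta} g(\hat A) \;\le\; \limsup_{\substack{A' \to \beta \\ t \to \infty}} \frac{h(tA')}{t} \;=\; h^\#(\beta) \;=\; G(\beta)
\]
by the very definition of $h^\#$. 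This is the one place where the distinction between the strong recession function $f^\infty$ and the generalized one $h^\#$ is essential.

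Now apply the sup-convolution with a cone: set
\[
  G_k(\hat A) := \sup_{\hat B \in \cl{\Bbb^{m\times d}}} \bigl( G(\hat B) - k\abs{\hat A - \hat B} \bigr), \qquad \hat A \in \cl{\Bbb^{m\times d}},\quad k \in \N .
\]
Each $G_k$ is $k$-Lipschitz, $G_{k+1} \le G_k$, $G_k \ge G$ (take $\hat B = \hat A$), $\norm{G_k}_\infty \le \norm{G}_\infty \le M$, and upper semicontinuity of $G$ gives $\inf_k G_k = \lim_k G_k = G$ pointwise on $\cl{\Bbb^{m\times d}}$. Define $h_k(A) := (1+\abs{A})\,G_k\bigl(A/(1+\abs{A})\bigr)$ (i.e.\ $h_k = S^{-1}G_k$). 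Then $h_k$ is Lipschitz on $\R^{m\times d}$, $Sh_k = G_k$ extends continuously to $\cl{\Bbb^{m\times d}}$, so $\ONE \otimes h_k \in \Ebf(\Omega;\R^{m\times d})$, and one computes $(\ONE\otimes h_k)^\infty(x,A) = h_k^\infty(A) = \abs{A}\,G_k(A/\abs{A})$ for $A \ne 0$ — the boundary trace of $Sh_k$ on $\partial\Bbb^{m\times d}$ is exactly $h_k^\infty$ there. Since $\phi \mapsto (1+\abs{\cdot})\,\phi(\,\cdot\,/(1+\abs{\cdot}))$ is order-preserving and $G_k$ is $M$-bounded, $(\ONE \otimes h_k)$ is non-increasing, $\abs{h_k(A)} \le M(1+\abs{A})$ for all $A$, and $h_k \downarrow h$ pointwise; and using positive $1$-homogeneity of $h^\#$ and of each $h_k^\infty$ together with $G_k(\beta) \downarrow h^\#(\beta)$ on the sphere, $h_k^\infty \downarrow h^\#$ pointwise on $\R^{m\times d}$. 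The only genuinely delicate step is the upper semicontinuity of $G$ across the boundary sphere, specifically the inequality $\limsup_{\hat A \to \beta} g(\hat A) \le h^\#(\beta)$; everything else is routine transfer through $S$ and the standard regularization of bounded u.s.c.\ functions on compacta.
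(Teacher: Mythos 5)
Your proof is correct, and it follows essentially the standard route: the paper itself does not prove this lemma but defers to Lemma~2.3 of Alibert--Bouchitt\'{e} and the appendix of \cite{KriRin10RSIF}, both of which argue exactly as you do, by transplanting $h$ via $S$ to a bounded u.s.c.\ function on the closed unit ball (with boundary values $h^\#$) and then applying the monotone Lipschitz approximation from above (sup-convolution) before pulling back through $S^{-1}$. The one step you rightly single out as delicate --- $\limsup_{\hat A \to \beta} (Sh)(\hat A) \leq h^\#(\beta)$, which is where the \emph{generalized} recession function enters --- is handled correctly.
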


A proof can be found in Lemma 2.3 of~\cite{AliBou97NUIG} or the appendix of~\cite{KriRin10RSIF}.

\subsection{Young measures} \label{ssc:YM}

Generalized Young measures were introduced by DiPerna \& Majda in~\cite{DiPMaj87OCWS}, we here follow the framework of~\cite{KriRin10CGGY}, which itself is based upon Alibert \& Bouchitt\'{e}'s reformulation~\cite{AliBou97NUIG} of the theory.

A \term{(generalized) Young measure} carried by the open set $\Omega \subset \R^d$ and with values in $\R^{m \times d}$ is a triple $(\nu_x,\lambda_\nu,\nu_x^\infty)$, where
\begin{itemize}
  \item[(i)] $(\nu_x)_{x \in \Omega} \subset \Mbf(\R^{m \times d})$ is a parametrized family of probability measures on $\R^{m \times d}$,
  \item[(ii)] $\lambda_\nu \in \Mbf(\cl{\Omega})$ is a positive finite measure on $\cl{\Omega}$, and
  \item[(iii)] $(\nu_x^\infty)_{x \in \cl{\Omega}} \subset \Mbf(\partial \Bbb^{m \times d})$ is a parametrized family of probability measures on the unit sphere $\partial \Bbb^{m \times d}$ of $\R^{m \times d}$.
\end{itemize}
Moreover, we require that
\begin{itemize}
  \item[(iv)] the map $x \mapsto \nu_x$ is weakly* measurable with respect to $\Lcal^d$, i.e.\ the function $x \mapsto \dpr{f(x,\frarg),\nu_x}$ is Lebesgue-measurable for every bounded Borel function $f \colon \Omega \times \R^{m \times d} \to \R$,
  \item[(v)] the map $x \mapsto \nu_x^\infty$ is weakly* measurable with respect to $\lambda_\nu$, and
  \item[(vi)] $x \mapsto \dprn{\abs{\frarg},\nu_x} \in \Lrm^1(\Omega)$.
\end{itemize}
We collect all such Young measures in the set $\Ybf(\Omega;\R^{m \times d})$.

For an integrand $f \in \Ebf(\Omega;\R^{m \times d})$ and a Young measure $\nu \in \Ybf(\Omega;\R^{m \times d})$ we set
\[
  \ddprb{f,\nu} := \int_\Omega \dprb{f(x,\frarg),\nu_x} \dd x
  + \int_{\cl{\Omega}} \dprb{f^\infty(x,\frarg),\nu_x^\infty} \dd \lambda_\nu(x).
\]
Since $\Ybf(\Omega;\R^{m \times d})$ is part of the dual space to $\Ebf(\Omega;\R^{m \times d})$ via the duality pairing $\ddpr{\frarg,\frarg}$, we say that a sequence of Young measures $(\nu_j) \subset \Ybf(\Omega;\R^{m \times d})$ \term{converges weakly*} to $\nu \in \Ybf(\Omega;\R^{m \times d})$, in symbols $\nu_j \toweakstar \nu$, if
\begin{equation} \label{eq:nuj_conv}
  \ddprb{f,\nu_j} \to \ddprb{f,\nu}  \qquad\text{for all $f \in \Ebf(\Omega;\R^{m \times d})$.}
\end{equation}

Fundamental for all Young measure theory is the following Compactness Theorem, see Section~3.1 of~\cite{KriRin10CGGY} for a proof:

\begin{theorem} \label{thm:YM_compactness}
Let $(\nu_j) \subset \Ybf(\Omega;\R^{m \times d})$ be a sequence of Young measures satisfying
\begin{itemize}
  \item[(i)] the functions $x \mapsto \dprn{\abs{\frarg},(\nu_j)_x}$ are uniformly bounded in $\Lrm^1(\Omega)$,
  \item[(ii)] $\sup_j \lambda_{\nu_j}(\cl{\Omega}) < \infty$,
\end{itemize}
or, equivalently,
\[
  \textstyle\sup_j \, \ddprb{\ONE \otimes \abs{\frarg}, \nu_j} < \infty.
\]
Then, there exists a subsequence (not relabeled) and $\nu \in \Ybf(\Omega;\R^{m \times d})$ such that $\nu_j \toweakstar \nu$ in $\Ybf(\Omega;\R^{m \times d})$.
\end{theorem}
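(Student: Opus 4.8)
The plan is to realize $\Ybf(\Omega;\R^{m \times d})$ as a weak*-closed subset of the dual of the \emph{separable} Banach space $\Ebf(\Omega;\R^{m \times d})$, to extract a weak* limit point $L$ of the bounded sequence $(\nu_j)$ in $\Ebf(\Omega;\R^{m \times d})^*$ by the sequential Banach--Alaoglu theorem, and then to show that $L$ is in fact represented by a generalized Young measure $\nu \in \Ybf(\Omega;\R^{m \times d})$.

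First I would record that $\Ebf(\Omega;\R^{m \times d})$ is separable: by definition the map $f \mapsto Sf$ is a linear isometry of $\Ebf(\Omega;\R^{m \times d})$ into $\Crm(\cl{\Omega \times \Bbb^{m \times d}})$, and the latter space is separable since $\cl{\Omega \times \Bbb^{m \times d}}$ is a compact metric space. Next, the two hypotheses displayed in the statement are literally equivalent because $\abs{\frarg} \equiv 1$ on $\partial \Bbb^{m \times d}$, so that $\ddpr{\ONE \otimes \abs{\frarg}, \nu_j} = \int_\Omega \dpr{\abs{\frarg},(\nu_j)_x} \dd x + \lambda_{\nu_j}(\cl{\Omega})$. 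From the definition of $\norm{\frarg}_{\Ebf(\Omega;\R^{m \times d})}$ one reads off the pointwise estimates $\abs{f(x,A)} \leq \norm{f}_{\Ebf(\Omega;\R^{m \times d})}(1+\abs{A})$ and $\abs{f^\infty(x,A)} \leq \norm{f}_{\Ebf(\Omega;\R^{m \times d})}\abs{A}$, whence
\[
  \absb{\ddprb{f,\nu_j}} \leq \norm{f}_{\Ebf(\Omega;\R^{m \times d})} \Bigl( \Lcal^d(\Omega) + \int_\Omega \dprb{\abs{\frarg},(\nu_j)_x} \dd x + \lambda_{\nu_j}(\cl{\Omega}) \Bigr),
\]
and by (i)--(ii) the factor in parentheses is bounded uniformly in $j$. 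Thus $(\nu_j)$ is bounded in $\Ebf(\Omega;\R^{m \times d})^*$, and separability yields a subsequence (not relabeled) and $L \in \Ebf(\Omega;\R^{m \times d})^*$ with $\ddpr{f,\nu_j} \to L(f)$ for all $f \in \Ebf(\Omega;\R^{m \times d})$.

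The remaining, and decisive, step is to identify $L$ with $\ddpr{\frarg,\nu}$ for some $\nu \in \Ybf(\Omega;\R^{m \times d})$. I would observe that $L$ is positive in the sense that $L(f) \geq 0$ whenever $Sf \geq 0$, since this holds for each $\nu_j$; since the image of $S$ contains the constants (note $S(\ONE \otimes (1+\abs{\frarg})) = \ONE$), composing with $S$ and extending by the Riesz/Hahn--Banach theorem for positive functionals produces a positive measure $\mu \in \Mbf(\cl{\Omega \times \Bbb^{m \times d}})$ with $L(f) = \int Sf \dd \mu$ for all $f \in \Ebf(\Omega;\R^{m \times d})$. Disintegrating $\mu$ with respect to the projection $\pi \colon \cl{\Omega} \times \cl{\Bbb^{m \times d}} \to \cl{\Omega}$ gives a $\pi_*\mu$-measurable family of probability measures on $\cl{\Bbb^{m \times d}}$, which I would then split along the sphere at infinity $\partial \Bbb^{m \times d} = \{\abs{\hat A}=1\}$: the restriction of $\mu$ to $\Omega \times \Bbb^{m \times d}$, transported back through the proper change of variables $\hat A \mapsto \hat A/(1-\abs{\hat A})$ and disintegrated over $\Omega$, yields the family $(\nu_x)_{x \in \Omega}$ (and identifies its $x$-marginal with $\Lcal^d \restrict \Omega$), while the part of $\mu$ carried by $\cl{\Omega} \times \partial \Bbb^{m \times d}$ furnishes $\lambda_\nu$ as its projection and $(\nu_x^\infty)_{x \in \cl{\Omega}}$ as the corresponding disintegration; properties (iv)--(v) then fall out of the disintegration theorem. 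Testing $L$ against integrands of the form $\phi(x)$, $\phi(x)\abs{A}$ and $\phi(x)\,g(A/\abs{A})\abs{A}$ with $\phi \in \Crm(\cl{\Omega})$ and $g \in \Crm(\partial \Bbb^{m \times d})$ forces $\dpr{1,\nu_x} = 1$ for $\Lcal^d$-a.e.\ $x$, $\dpr{1,\nu_x^\infty} = 1$ for $\lambda_\nu$-a.e.\ $x$, and $x \mapsto \dpr{\abs{\frarg},\nu_x} \in \Lrm^1(\Omega)$, completing (i)--(vi). I expect this final bookkeeping --- cleanly separating the \enquote{finite} and the \enquote{at-infinity} parts of $\mu$ and verifying all the normalizations --- to be the main obstacle; everything preceding it is soft functional analysis.
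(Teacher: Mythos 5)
Your argument is correct and is essentially the standard one: the paper does not prove this theorem itself but defers to Section~3.1 of~\cite{KriRin10CGGY}, where the proof proceeds exactly as you describe --- isometric identification of $\Ebf(\Omega;\R^{m \times d})$ with $\Crm(\cl{\Omega \times \Bbb^{m \times d}})$ via $S$, sequential Banach--Alaoglu in the dual of this separable space, Riesz representation of the positive limit functional by a measure $\mu$ on the compactification, and disintegration into the parts carried by $\cl{\Omega}\times\Bbb^{m\times d}$ and by $\cl{\Omega}\times\partial\Bbb^{m\times d}$. The only point in your \enquote{bookkeeping} step that deserves explicit care is that $(\nu_x)$ must be obtained by disintegrating the \emph{weighted} measure $(1-\abs{\hat A})\,\mu$ (pushed forward under $\hat A \mapsto \hat A/(1-\abs{\hat A})$) with respect to its marginal $\Lcal^d\restrict\Omega$, rather than $\mu$ itself; testing with $\phi\otimes\ONE$ as you propose is precisely what identifies this weighted marginal with Lebesgue measure and makes the $\nu_x$ probability measures.
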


The following density (or separability) lemma is proved in Lemma~3 of~\cite{KriRin10CGGY}:

\begin{lemma} \label{lem:E_separable}
There exists a countable set of functions $\{f_k\} = \setn{ \phi_k \otimes h_k \in \Crm(\cl{\Omega}) \times \Crm(\R^{m \times d}) }{ k \in \N } \subset \Ebf(\Omega;\R^{m \times d})$ such that $\ddpr{f_k,\nu_1} = \ddpr{f_k,\nu_2}$ for two Young measures $\nu_1,\nu_2 \in \Ybf(\Omega;\R^{m \times d})$ and all $k \in \N$, implies $\nu_1 = \nu_2$. Moreover, all the $h_k$ can be chosen Lipschitz continuous.
\end{lemma}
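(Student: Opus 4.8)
\emph{Overall plan.} The transformation $S$ identifies $\Ebf(\Omega;\R^{m\times d})$ isometrically with the Banach space $\Crm(\cl\Omega\times\cl{\Bbb^{m\times d}})$ of continuous functions on a compact metric space, which is separable; the task then reduces to upgrading an arbitrary countable dense family to one of the special form $\phi_k\otimes h_k$ with $h_k$ Lipschitz and $\phi_k\otimes h_k\in\Ebf(\Omega;\R^{m\times d})$. The genuinely delicate point is exactly this upgrade, since a bare dense set carries neither a tensor structure nor any regularity; the remedy I would use is to manufacture the dense family by pulling polynomials back through $S^{-1}$.

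\emph{Step 1: separability of $\Ebf$.} First I would check that $S$ is a linear bijection of $\Ebf(\Omega;\R^{m\times d})$ onto $\Crm(\cl\Omega\times\cl{\Bbb^{m\times d}})$, with inverse $(S^{-1}g)(x,A)=(1+\abs{A})\,g\bigl(x,A/(1+\abs{A})\bigr)$, and that $\norm{f}_{\Ebf(\Omega;\R^{m\times d})}=\norm{Sf}_{\Crm(\cl\Omega\times\cl{\Bbb^{m\times d}})}$ by the very definition of $\Ebf$ (the supremum of $\abs{Sf}$ over $\Omega\times\Bbb^{m\times d}$ agrees with that over its compact closure). Hence $S$ is an isometric isomorphism, and since $\Crm$ of a compact metric space is separable, so is $\Ebf(\Omega;\R^{m\times d})$.

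\emph{Step 2: a dense family of tensor products with Lipschitz factors.} By the Stone--Weierstrass theorem the polynomials in the $d+md$ coordinates $(x,\hat A)$ are dense in $\Crm(\cl\Omega\times\cl{\Bbb^{m\times d}})$, and each such polynomial is a finite $\R$-linear combination of products $p(x)\,q(\hat A)$ of a monomial $p$ in $x$ and a monomial $q$ in $\hat A$. A short computation shows that $S^{-1}(p\otimes q)=\phi\otimes h$ with $\phi:=p\in\Crm(\cl\Omega)$ and $h(A):=(1+\abs{A})\,q\bigl(A/(1+\abs{A})\bigr)\in\Crm(\R^{m\times d})$, that $\phi\otimes h\in\Ebf(\Omega;\R^{m\times d})$ (because $S(\phi\otimes h)=p\otimes q$ is continuous on the compact set $\cl\Omega\times\cl{\Bbb^{m\times d}}$), that $h$ has linear growth (as $q$ is bounded on $\cl{\Bbb^{m\times d}}$), and --- this is the reason for choosing polynomials --- that $h$ is Lipschitz: by the chain rule $\nabla h$ is bounded, since $A\mapsto A/(1+\abs{A})$ is $\Crm^1$ on $\R^{m\times d}$ with derivative of operator norm $\BigO\bigl((1+\abs{A})^{-1}\bigr)$, which offsets the prefactor $1+\abs{A}$. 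Relabelling the countably many admissible monomial pairs as $(p_k,q_k)_{k\in\N}$ and setting $\phi_k:=p_k$, $h_k(A):=(1+\abs{A})q_k\bigl(A/(1+\abs{A})\bigr)$ and $f_k:=\phi_k\otimes h_k$, the $\R$-linear span of $\{f_k\}$ equals $S^{-1}$ of all polynomials and is therefore dense in $\Ebf(\Omega;\R^{m\times d})$.

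\emph{Step 3: conclusion.} Let $\nu_1,\nu_2\in\Ybf(\Omega;\R^{m\times d})$ with $\ddpr{f_k,\nu_1}=\ddpr{f_k,\nu_2}$ for every $k$. Each $\nu_i$ defines a bounded linear functional $f\mapsto\ddpr{f,\nu_i}$ on $\Ebf(\Omega;\R^{m\times d})$, with norm at most $\int_\Omega\bigl(1+\dpr{\abs{\frarg},(\nu_i)_x}\bigr)\dd x+\lambda_{\nu_i}(\cl\Omega)<\infty$ by properties (ii) and (vi) of a Young measure (using $\abs{f(x,A)}\le(1+\abs{A})\norm{f}_{\Ebf}$ and $\abs{f^\infty(x,\beta)}\le\norm{f}_{\Ebf}$ for $\abs{\beta}=1$, the recession function being the boundary trace of $Sf$). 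By linearity these two functionals agree on the span of $\{f_k\}$, hence on all of $\Ebf(\Omega;\R^{m\times d})$ by density. Since the pairing $\ddpr{\frarg,\frarg}$ is non-degenerate on $\Ybf(\Omega;\R^{m\times d})$ --- equivalently: via $S$ the number $\ddpr{f,\nu}$ is the integral of $Sf$ against a finite positive Radon measure $\sigma_\nu$ on $\cl\Omega\times\cl{\Bbb^{m\times d}}$, and disintegrating $\sigma_\nu$ over the two disjoint Borel pieces $\cl\Omega\times\Bbb^{m\times d}$ and $\cl\Omega\times\partial\Bbb^{m\times d}$ recovers $(\nu_x)_x$, $\lambda_\nu$ and $(\nu_x^\infty)_x$ --- we conclude $\nu_1=\nu_2$. (If one prefers to treat the non-degeneracy as part of the duality set-up already recalled above, Step~3 reduces to the density argument alone.)
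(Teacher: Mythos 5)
Your proof is correct. Note first that the paper does not actually prove this lemma: it is quoted from Lemma~3 of the cited reference \cite{KriRin10CGGY}, whose argument runs along the same lines as yours (the isometry $S\colon \Ebf(\Omega;\R^{m\times d})\to\Crm(\cl{\Omega}\times\cl{\Bbb^{m\times d}})$, separability of the latter, and density of tensor products via Stone--Weierstrass). Your specific device of pulling back monomials $p(x)q(\hat A)$ through $S^{-1}$ is a clean way to obtain the tensor structure and the Lipschitz continuity of the $h_k$ in one stroke, whereas the reference obtains Lipschitz representatives by a separate smoothing step; both routes work. Two very minor points you may wish to tighten: (a) $\hat A\mapsto A/(1+\abs{A})$ is indeed $\Crm^1$ at $A=0$, but the product-rule term $\tfrac{A}{\abs{A}}\,q\bigl(A/(1+\abs{A})\bigr)$ in $\nabla h$ is undefined at the origin since $\abs{\frarg}$ is not differentiable there; as this term is bounded and $h$ is continuous, $h$ is still globally Lipschitz, but the sentence ``by the chain rule $\nabla h$ is bounded'' should acknowledge this. (b) In Step~3 the recovery of $(\nu_x,\lambda_\nu,\nu_x^\infty)$ from $\sigma_\nu$ uses that the regular part of $\sigma_\nu$ charges only $\Omega\times\Bbb^{m\times d}$ while the singular part charges only $\cl{\Omega}\times\partial\Bbb^{m\times d}$, and that disintegrations are unique up to null sets --- which matches the a.e.\ sense in which Young measures are defined; this is exactly the injectivity implicit in the paper's phrase that $\Ybf(\Omega;\R^{m\times d})$ is ``part of the dual space'' of $\Ebf(\Omega;\R^{m\times d})$, so you were right to flag that it can be taken as part of the set-up.
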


An immediate consequence is that to determine the limit in the weak* convergence $\nu_j \toweakstar \nu$ in $\Ebf(\Omega;\R^{m \times d})$ of a bounded Young measure sequence in the sense of conditions (i), (ii) in Theorem~\ref{thm:YM_compactness}, it suffices to test with the collection $\{f_k\}$ exhibited in the previous lemma.

Each measure $\mu \in \Mbf(\cl{\Omega};\R^{m \times d})$ with Lebesgue--Radon--Nikod\'{y}m decomposition $\mu = a \Lcal^d \restrict \Omega + p \abs{\mu^s}$, where $a \in \Lrm^1(\Omega;\R^{m \times d})$, $p \in \Lrm^1(\cl{\Omega},\abs{\mu^s};\partial \Bbb^{m \times d})$, induces an \term{elementary Young measure} $\epsilon_\mu \in \Ybf(\Omega;\R^{m \times d})$ through
\[
  (\epsilon_\mu)_x := \delta_{a(x)},  \qquad \lambda_{\epsilon_\mu} := \abs{\mu^s},
    \qquad (\epsilon_\mu)_x^\infty := \delta_{p(x)}.
\]
If $\epsilon_{\mu_j} \toweakstar \nu$ in $\Ybf(\Omega;\R^{m \times d})$, then we say that the $\mu_j$ \term{generate} $\nu$ and we write $\mu_j \toY \nu$.

The limit representation~\eqref{eq:nuj_conv} can be extended as follows, see Proposition~2 of~\cite{KriRin10CGGY} for a proof.

\begin{proposition} \label{prop:ext_repr}
Let $\nu_j \toweakstar \nu$ in $\Ybf(\Omega;\R^{m \times d})$. Then, $\ddpr{f,\nu_j} \to \ddpr{f,\nu}$ 
holds provided one of the following conditions is satisfied:
\begin{enumerate}
  \item[(i)] $f \colon \cl{\Omega} \times \R^{m \times d} \to \R$ is a Carath\'{e}odory integrand possessing a recession function $f^\infty$ in the sense of~\eqref{eq:f_infty} that is (jointly) continuous.
  \item[(ii)] $f(x,A) = \ONE_U(x) g(x,A)$, where $g \in \Ebf(\Omega;\R^n)$ and a Borel set $U \subset \Omega$ with $(\Lcal^d + \lambda_\nu)(\partial U) = 0$,
\end{enumerate}
\end{proposition}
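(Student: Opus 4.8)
The plan is to reduce both statements to the defining property $\ddprb{h,\nu_j}\to\ddprb{h,\nu}$ for $h\in\Ebf(\Omega;\R^{m\times d})$, by approximating the less regular integrands of (i), (ii) by members of $\Ebf(\Omega;\R^{m\times d})$. Two preliminary observations are used throughout. First, testing with $\phi\otimes(1+\abs{\frarg})$, $\phi\in\Crm(\cl{\Omega})$ (note $(1+\abs{\frarg})^\infty=\abs{\frarg}$ and $\dprn{\abs{\frarg},(\nu_j)^\infty_x}=1$), shows that the auxiliary measures
\[
  \Lambda_j:=\dprn{\abs{\frarg},(\nu_j)_x}\Lcal^d\restrict\Omega+\lambda_{\nu_j}\in\Mbf(\cl{\Omega})
\]
converge weakly* to $\Lambda:=\dprn{\abs{\frarg},\nu_x}\Lcal^d\restrict\Omega+\lambda_\nu$, with convergence of total masses; in particular $\sup_j\Lambda_j(\cl{\Omega})<\infty$, which is also the needed uniform $\Lrm^1$-bound on $x\mapsto\dprn{\abs{\frarg},(\nu_j)_x}$. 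Secondly, for $c\in\Crm_0(\R^{m\times d})$ one has $\ONE\otimes c\in\Ebf(\Omega;\R^{m\times d})$ with $(\ONE\otimes c)^\infty=0$, so testing $\phi\otimes c$ ($\phi\in\Crm(\cl{\Omega})$), together with $\absn{\dprn{c,(\nu_j)_x}}\le\norm{c}_\infty$ and density of $\Crm_c(\Omega)$ in $\Lrm^1(\Omega)$, gives $\dprn{c,(\nu_j)_x}\toweakstar\dprn{c,\nu_x}$ in $\Lrm^\infty(\Omega)$, hence $\int_U\dprn{c,(\nu_j)_x}\dd x\to\int_U\dprn{c,\nu_x}\dd x$ for every bounded Borel $U\subset\Omega$.

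\textit{Part (ii).} Since $\dprn{\abs{\frarg},\nu_x}\in\Lrm^1(\Omega)$ and $(\Lcal^d+\lambda_\nu)(\partial U)=0$, also $\Lambda(\partial U)=0$, so by outer regularity there is, for each $\epsilon>0$, a closed neighbourhood $\cl{V}$ of $\partial U$ in $\cl{\Omega}$ with $(\Lcal^d+\Lambda)(\cl{V})<\epsilon$. Choose $\varphi\in\Crm(\cl{\Omega})$ with $0\le\varphi\le1$, $\varphi\equiv1$ on $\cl{U}\setminus V$ and $\varphi\equiv0$ on $\cl{\Omega}\setminus(U\cup V)$ (Urysohn), so that $\{\ONE_U\ne\varphi\}\subset\cl{V}$. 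Then $\ONE_U g=\varphi g+(\ONE_U-\varphi)g$; the first term is a product of a $\Crm(\cl\Omega)$-function and an $\Ebf$-function, hence in $\Ebf(\Omega;\R^{m\times d})$, so $\ddprn{\varphi g,\nu_j}\to\ddprn{\varphi g,\nu}$, while $\absn{g(x,A)}\le\norm{g}_{\Ebf}(1+\abs{A})$ and $\absn{g^\infty(x,A)}\le\norm{g}_{\Ebf}\abs{A}$ give
\[
  \absb{\ddprb{(\ONE_U-\varphi)g,\nu_j}}\le\norm{g}_{\Ebf}\bigl(\Lcal^d(\cl{V})+\Lambda_j(\cl{V})\bigr),
\]
whose $\limsup$ over $j$ is $\le\norm{g}_{\Ebf}(\Lcal^d(\cl{V})+\Lambda(\cl{V}))<2\epsilon\norm{g}_{\Ebf}$ since $\ONE_{\cl{V}}$ is upper semicontinuous and $\Lambda_j\toweakstar\Lambda$; the same bound holds with $\nu$ in place of $\nu_j$. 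As $\varphi\to\ONE_U$ pointwise off the $(\Lcal^d+\lambda_\nu)$-null set $\partial U$, dominated convergence yields $\ddprn{\varphi g,\nu}\to\ddprn{\ONE_U g,\nu}$ as $\epsilon\to0$. Letting $j\to\infty$ and then $\epsilon\to0$ finishes (ii).

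\textit{Part (i).} Write $f=f^\infty+g$. Being jointly continuous and positively $1$-homogeneous, $f^\infty$ satisfies $Sf^\infty=f^\infty$, which extends continuously to $\cl{\Omega}\times\cl{\Bbb^{m\times d}}$, so $f^\infty\in\Ebf(\Omega;\R^{m\times d})$ and $\ddprn{f^\infty,\nu_j}\to\ddprn{f^\infty,\nu}$; it remains to treat $g:=f-f^\infty$, which has linear growth and, directly from the definition \eqref{eq:f_infty}, vanishing recession $g^\infty\equiv0$. The decisive point is that, because the limit in \eqref{eq:f_infty} is taken also over the base point $x'$, $g$ decays sublinearly \emph{uniformly in $x$}, i.e.\
\[
  \gamma_R:=\sup_{x\in\Omega}\ \sup_{\abs{A}\ge R}\ \frac{\absn{g(x,A)}}{1+\abs{A}}\ \longrightarrow\ 0\qquad(R\to\infty),
\]
which follows from \eqref{eq:f_infty} by a compactness argument over $\cl{\Omega}\times\partial\Bbb^{m\times d}$. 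With $\eta\in\Crm_c([0,\infty))$ a standard cut-off equal to $1$ near the origin, the truncations $g_R(x,A):=g(x,A)\,\eta(\abs{A}/R)$ are then bounded Carath\'{e}odory integrands with $g_R^\infty=0$, and as the recession terms vanish,
\[
  \absb{\ddprb{g-g_R,\nu_j}}\le\gamma_R\int_\Omega\dprn{1+\abs{\frarg},(\nu_j)_x}\dd x\le C\gamma_R
\]
with $C$ independent of $j$, and likewise for $\nu$. Finally, for fixed $R$ the map $x\mapsto g_R(x,\frarg)$ is bounded and strongly measurable into $\Crm_0(\R^{m\times d})$, hence approximable in $\Lrm^1\bigl(\Omega;\Crm_0(\R^{m\times d})\bigr)$ by simple functions $s=\sum_{i=1}^N\ONE_{U_i}\otimes c_i$, $c_i\in\Crm_0(\R^{m\times d})$; then $\absn{\ddprn{g_R-s,\nu_j}}\le\int_\Omega\norm{g_R(x,\frarg)-s(x)}_\infty\dd x$ uniformly in $j$ (each $(\nu_j)_x$ being a probability measure), while $\ddprn{s,\nu_j}=\sum_i\int_{U_i}\dprn{c_i,(\nu_j)_x}\dd x\to\ddprn{s,\nu}$ by the second preliminary. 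Chaining these three approximations and sending $\epsilon\to0$ and then $R\to\infty$ gives $\ddprn{g,\nu_j}\to\ddprn{g,\nu}$, hence (i).

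The only genuinely delicate step is the passage from the pointwise limit \eqref{eq:f_infty} to the uniform-in-$x$ decay $\gamma_R\to0$: without it the truncation error $\ddprn{g-g_R,\nu_j}$ cannot be made small uniformly in $j$, since the densities $x\mapsto\dprn{\abs{\frarg},(\nu_j)_x}$ need not be equiintegrable — this is exactly why a bona fide limit, rather than merely a limes superior, is demanded in the definition of $f^\infty$. Everything else is routine bookkeeping, the guiding principle being that oscillation and concentration (the $\Lcal^d$- and $\Lambda_j$-parts) may be separated only for integrands with vanishing recession, as exploited above.
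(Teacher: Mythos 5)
The paper does not actually prove this proposition; it defers to Proposition~2 of~\cite{KriRin10CGGY}. Measured against that, your argument is a correct, self-contained proof along essentially the standard lines, and all the key mechanisms are in place: the auxiliary measures $\Lambda_j\toweakstar\Lambda$ with mass convergence (obtained by testing with $\phi\otimes(1+\abs{\frarg})$), the portmanteau bound $\limsup_j\Lambda_j(\cl{V})\le\Lambda(\cl{V})$ on a closed neighbourhood of the $(\Lcal^d+\lambda_\nu)$-negligible set $\partial U$ for part~(ii), and for part~(i) the splitting $f=f^\infty+g$ with $f^\infty\in\Ebf(\Omega;\R^{m\times d})$ (by $1$-homogeneity $Sf^\infty=f^\infty$). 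You correctly isolate the one genuinely delicate point: the uniform sublinear decay $\gamma_R\to0$ of $g=f-f^\infty$, which does follow by the compactness/contradiction argument over $\cl{\Omega}\times\partial\Bbb^{m\times d}$ precisely because \eqref{eq:f_infty} is a full limit in $(x',A',t)$ and $f^\infty$ is jointly continuous --- this is where a mere limes superior would break the argument, consistent with Remark~\ref{rem:rec_funct}. The main stylistic difference from the cited proof is in handling the truncated part $g_R$: the reference works via Scorza--Dragoni-type restriction to compact sets of joint continuity, whereas you view $x\mapsto g_R(x,\frarg)$ as a strongly measurable map into the separable space $\Crm_0(\R^{m\times d})$ and approximate by vector-valued simple functions, exploiting that integrands $\ONE_{U}\otimes c$ with $c\in\Crm_0$ have no concentration part and so tolerate arbitrary Borel $U$ via the $\Lrm^\infty$-weak* convergence of $x\mapsto\dprn{c,(\nu_j)_x}$. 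Both routes buy the same thing; yours is arguably cleaner bookkeeping. No gaps.
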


The \term{barycenter} $[\nu] \in \Mbf(\R^d;\R^{m \times d})$ of a Young measure $\nu \in \Ybf(\R^d;\R^{m \times d})$, is
\[
  [\nu] := \dprb{\id,\nu_x} \, \Lcal^d + \dprb{\id,\nu_x^\infty} \, \lambda_\nu.
\]
Clearly, if $\mu_j \toweakstar \mu$ in $\Mbf(\cl{\Omega};\R^{m \times d})$ and $\mu_j \toY \nu \in \Ybf(\Omega;\R^{m \times d})$, then $[\nu] = \mu$.

\subsection{Gradient Young measures} \label{ssc:GYM}

In this paper, we are only interested in Young measures that are generated by a sequence of $\Wrm^{1,1}$-gradients or $\BV$-derivatives. We define the set $\GYbf(\Omega;\R^{m \times d})$ of \term{gradient Young measures} to be the set of all Young measures $\nu \in \Ybf(\Omega;\R^{m \times d})$ such that there exists a (necessarily norm-bounded) sequence $(u_j) \subset \BV(\Omega;\R^m)$ with $Du_j \toY \nu$.

We have the following theorem on generation, which is an immediate consequence of the above Compactness Theorem:

\begin{theorem}
Let $(u_j) \subset \BV(\Omega;\R^m)$ be a uniformly norm-bounded sequence, that is $\sup_j (\norm{u_j}_{\Lrm^1(\Omega;\R^m)} + \abs{Du_j}(\Omega)) < \infty$. Then, there exists a subsequence of the $u_j$ (not relabeled) such that $Du_j \toY \nu \in \GYbf(\Omega;\R^{m \times d})$.
\end{theorem}

By mollification (see Proposition~4 of~\cite{KriRin10CGGY}) it is proved that for every $\nu \in \GYbf(\Omega;\R^{m \times d})$, there also exists a sequence $(v_j) \subset (\Wrm^{1,1} \cap \Crm^\infty)(\Omega;\R^m)$ with $\nabla v_j \toY \nu$ (which is of course to be understood as $\nabla v_j \Lcal^d \restrict \Omega \toY \nu$). In fact, even the following stronger statement is true:

\begin{lemma} \label{lem:boundary_adjust}
Let $\nu \in \GYbf(\Omega;\R^{m \times d})$ be a gradient Young measure with $\lambda_\nu(\partial \Omega) = 0$ and barycenter $[\nu] = Du$, where $u \in \BV(\Omega;\R^m)$. Then, there exists a generating sequence $(v_j) \subset (\Wrm^{1,1} \cap \Crm^{\infty})(\Omega;\R^m)$ with $Dv_j \toY \nu$, $v_j \toweakstar u$ in $\BV(\Omega;\R^m)$, and $v_j|_{\partial \Omega} = u|_{\partial \Omega}$ (in the sense of trace) for all $j \in \N$.
\end{lemma}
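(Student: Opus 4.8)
The plan is to start from a sequence $(u_j) \subset \BV(\Omega;\R^m)$ with $Du_j \toY \nu$ and $u_j \toweakstar u$ (such a sequence exists since $[\nu] = Du$ pins down the weak* limit up to a constant, which can be adjusted). The first step is to mollify: replacing $u_j$ by $u_j \conv \rho_{\epsilon_j}$ for a suitable diagonal sequence $\epsilon_j \todown 0$, one obtains $w_j \in \Crm^\infty(\Omega';\R^m)$ on slightly shrunk domains $\Omega' \Subset \Omega$ with $Dw_j \toY \nu$ still holding (this is Proposition~4 of~\cite{KriRin10CGGY}); the point of the hypothesis $\lambda_\nu(\partial\Omega)=0$ is that no mass of the Young measure is lost near $\partial\Omega$ in this process. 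So the real content is the boundary adjustment: upgrading a smooth (or $\Wrm^{1,1}$) generating sequence that converges weakly* to $u$ to one whose boundary trace is \emph{exactly} $u|_{\partial\Omega}$.

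The second step carries out this adjustment by a cut-off/gluing construction. Fix a sequence of open sets $\Omega_k \Subset \Omega$ with $\Lcal^d(\Omega \setminus \Omega_k) \to 0$ and $\abs{Du}(\Omega \setminus \Omega_k) \to 0$ and, crucially, with $\Hcal^{d-1}(\partial\Omega_k)$ controlled (e.g. take $\Omega_k = \{x : \dist(x,\partial\Omega) > 1/k\}$, whose boundaries have uniformly bounded perimeter since $\Omega$ is Lipschitz). On a collar $\Omega \setminus \cl{\Omega_k}$ one interpolates between the generating sequence and a fixed function realizing the trace $u|_{\partial\Omega}$: concretely, pick $\phi_k \in \Crm^\infty_c(\Omega;[0,1])$ with $\phi_k = 1$ on $\Omega_k$, and set
\[
  v_j^{k} := \phi_k \, w_j + (1 - \phi_k)\, \tilde{u},
\]
where $\tilde{u} \in \BV(\Omega;\R^m)$ is chosen (or mollified off the boundary) so that $\tilde{u}$ is smooth in a neighbourhood of $\partial\Omega_k$, agrees with the required trace, and is $\Lrm^1$-close to $u$. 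The derivative is $Dv_j^k = \phi_k Dw_j + (1-\phi_k) D\tilde{u} + (w_j - \tilde{u}) \otimes \nabla\phi_k$, and the idea is that on $\Omega_k$ this equals $Dw_j$, so the Young measure is unchanged there, while the corrector terms are supported in the thin collar and can be made small. Since $w_j \to u$ and $\tilde u \approx u$ in $\Lrm^1$, the term $(w_j - \tilde u)\otimes\nabla\phi_k$ is small in $\Lrm^1$ for $j$ large (for each fixed $k$), and the remaining collar contributions have total variation bounded by $\abs{Du}(\Omega\setminus\Omega_k) + \abs{D\tilde u}(\text{collar})$, which is small for $k$ large.

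The third step is to diagonalize. For each $k$, one has $Dv_j^k \toY \nu^k$ as $j \to \infty$, where $\nu^k$ agrees with $\nu$ on $\Omega_k$ and carries a small error mass on $\Omega \setminus \Omega_k$ of order $\abs{Du}(\Omega\setminus\Omega_k) + o_k(1)$; using the metrizability of weak* convergence on bounded sets of $\Ybf$ (Lemma~\ref{lem:E_separable} gives a countable determining family, hence a metric), one checks $\nu^k \toweakstar \nu$ as $k \to \infty$. A standard diagonal argument then produces $v_j := v_{j(k)}^{k}$ with $Dv_j \toY \nu$, $v_j \toweakstar u$ in $\BV$, $v_j \in (\Wrm^{1,1}\cap\Crm^\infty)(\Omega;\R^m)$, and $v_j|_{\partial\Omega} = \tilde u|_{\partial\Omega} = u|_{\partial\Omega}$ by construction. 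The main obstacle I expect is precisely the bookkeeping in the second step: one must ensure simultaneously that (a) the interpolation does not create spurious Young-measure mass \emph{inside} $\Omega$ — this is why the cut-off is done on a collar shrinking to $\partial\Omega$, where $\lambda_\nu$ has no mass and $Du$ has vanishingly small mass — and (b) the trace is matched exactly, which forces $\tilde u$ near $\partial\Omega$ to be rigid while $w_j$ must be smoothed without destroying generation. Getting the quantitative estimates on the collar to line up (so that the errors are genuinely controlled by $\abs{Du}(\Omega \setminus \Omega_k)$ and the Lipschitz geometry of $\Omega$, rather than blowing up as $\nabla\phi_k$ does) is the technical heart; everything else is soft functional analysis.
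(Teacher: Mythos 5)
Your proposal is correct and follows essentially the same route as the proof the paper defers to (Lemma~4 of~\cite{KriRin10CGGY}): mollify, glue on a shrinking collar to a smooth $\Wrm^{1,1}$ function realizing the trace $u|_{\partial\Omega}$, use that $\Lcal^d+\abs{Du}+\lambda_\nu$ gives the collar vanishing mass (here $\lambda_\nu(\partial\Omega)=0$ enters), and diagonalize via the countable determining family. The one step to phrase more carefully is the cross term: for fixed $k$ it converges to $(u-\tilde u)\otimes\nabla\phi_k$, which is small only if $\tilde u=\tilde u_k$ is chosen with $\norm{u-\tilde u_k}_{\Lrm^1}$ small compared to $\norm{\nabla\phi_k}_\infty^{-1}$ (while keeping $\abs{D\tilde u_k}$ of the collar controlled) --- exactly the tuning you already flag as the technical heart.
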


This boundary adjustment is standard, a detailed proof can be found in Lemma~4 of~\cite{KriRin10CGGY}.

\subsection{Quasiconvexity}  \label{ssc:qc}

A locally bounded Borel function $h \colon \R^{m \times d} \to \R$ is called \term{quasiconvex} if
\[
  h(A) \leq \dashint_{\omega} h \bigl( A + \nabla \psi(x) \bigr) \dd x
  \qquad\text{for all $A \in \R^{m \times d}$ and all
  $\psi \in \Crm_0^\infty(\omega;\R^m)$,}
\]
where $\omega \subset \R^d$ is an arbitrary bounded Lipschitz domain, and $\Crm_0^\infty(\omega;\R^m)$ is the set of infinitely differentiable functions with zero boundary values. By standard covering arguments it suffices to check this for one particular choice of $\omega$ only. Moreover, if $h$ has linear growth at infinity, the requirement that $\psi \in \Crm_0^\infty(\omega;\R^m)$ may equivalently be replaced by $\psi \in \Wrm_0^{1,1}(\omega;\R^m)$. See~\cite{Daco08DMCV} for details on quasiconvexity.

It is well-known that quasiconvex functions are rank-one convex, i.e.\ convex along rank-one lines. Notice also that under the assumption of linear growth it follows from Fatou's Lemma that the generalized recession function $h^\#$ is quasiconvex whenever $h$ is, see for example~\cite{AmFuPa00FBVF},~pp.~303--304; the same applies to $f^\infty$ if it exists (quasiconvexity then is understood with respect to the second argument).

It is shown in Morrey's book~\cite{Morr66MICV}, see also Lemma~2.2 of~\cite{BaKiKr00RQE}, that quasiconvex functions with linear growth are Lipschitz continuous, hence we may use the simpler definition~\eqref{eq:rec_funct_simpler} for the recession functions.

\section{Tangent measures, rigidity, and fine structure of $\BV$-derivatives}  \label{sc:rigidity}

\subsection{Tangent measures}

Let $T^{(x_0,r)}(x) := (x-x_0)/r$ for $x_0 \in \R^d$ and $r > 0$. For a matrix-valued measure $\mu \in \Mbf(\R^d;\R^{m \times d})$ and $x_0 \in \supp \mu$, we call a \term{tangent measure} to $\mu$ in $x_0$ any weak* limit of (restrictions of) the rescaled measures $c_n T^{(x_0,r_n)}_* \mu$ in $\Mbf(\Bbb^d;\R^{m \times d})$, where $r_n \todown 0$ is a sequence of radii and $c_n := \abs{\mu}(B(x_0,r_n))^{-1}$. The set of all such tangent measures is denoted by $\Tan(\mu,x_0)$ and the sequence $c_n T^{(x_0,r_n)}_* \mu$ is called a \term{blow-up sequence}. General information on the above definition of tangent measures can for example be found in Chapter~2 of~\cite{AmFuPa00FBVF}, whereas in~\cite{Matt95GSME} one can find much information on Preiss's original, more general definition of tangent measures and their applications in Geometric Measure Theory (see~\cite{Prei87GMDR} for the original work).

At $\abs{\mu}$-almost every point $x_0 \in \supp \mu$, there exists a sequence $r_n \todown 0$ such that the condition
\begin{equation} \label{eq:blowup_loc_bdd}
  \limsup_{n \to \infty} \frac{\abs{\mu}(B(x_0,Kr_n))}{\abs{\mu}(B(x_0,r_n))}
  \leq \beta_K
\end{equation}
is satisfied for all $K \in \N$ and some constants $\beta_K \geq 0$; this is proved in Lemma~2.4, Theorem~2.5 of~\cite{Prei87GMDR} (or see the appendix to~\cite{Rind10?LSIF}).

A special property of tangent measures is that at $\abs{\mu}$-almost every $x_0 \in \R^d$ and any $r_n \todown 0$ it holds that
\begin{equation} \label{eq:Tan_abs}
  \tau = \wslim_{n\to\infty} \, c_n T_*^{(x_0,r_n)} \mu  \quad\text{if and only if}\quad
  \abs{\tau} = \wslim_{n\to\infty} \, c_n T_*^{(x_0,r_n)} \abs{\mu},
\end{equation}
where the weak* limits are to be understood in the spaces $\Mbf(\Bbb^d;\R^{m \times d})$ and $\Mbf(\Bbb^d;\R)$, respectively, see for instance Theorem~2.44 in~\cite{AmFuPa00FBVF} for a proof. The previous equivalence in particular entails
\begin{equation} \label{eq:Tan_density}
  \Tan(\mu,x_0) = \frac{\di \mu}{\di \abs{\mu}}(x_0) \cdot \Tan(\abs{\mu},x_0).
\end{equation}

We will also need to employ tangent measures which are not defined on the unit ball $\Bbb^d$, but on some other open convex set $C \subset \R^d$ containing the origin. In this case, we write $\Tan_C(\mu,x_0)$ instead of $\Tan(\mu,x_0)$ and set $c_n :=  \abs{\mu}(C(x_0,r_n))^{-1}$, where $C(x_0,r) := x_0 + rC$. Clearly, analogous statements to before hold.

At a given point $x_0 \in \supp \mu$, different blow-up sequences might behave very differently. Most starkly, this phenomenon can be observed for the (positive) O'Neil measure~\cite{ONei95MLST}, which has \emph{every} non-zero sub-probability measure as tangent measure at almost every point. Therefore, we need to distinguish several classes of blow-up sequences $\gamma_j := c_j T^{(x_0,r_j)}_* \mu$ for a measure $\mu \in \Mbf(\R^d;\R^{m \times d})$ at a point $x_0 \in \supp \mu$:
\begin{itemize}
  \item \term{Regular blow-up:} $x_0$ is a regular point of $\mu$ and $\gamma_j \toweakstar A_0 \Lcal^d \restrict C$, where $A_0 = \frac{\di \mu}{\di \Lcal^d}(x_0)$.
  \item \term{Semi-regular blow-up:} $x_0$ is a singular point of $\mu$, but nevertheless $\gamma_j \toweakstar A_0 \Lcal^d  \restrict C$, where $A_0 = \frac{\di \mu}{\di \abs{\mu}}(x_0)$.
  \item \term{Fully singular blow-up:} $x_0$ is a singular point of $\mu$ and $\gamma_j \toweakstar \tau$, but $\tau \neq A \Lcal^d \restrict C$ for any $A \in \R^{m \times d}$.
\end{itemize}
At $\Lcal^d$-almost every Lebesgue point $x_0 \in \supp \mu$ of $\frac{\di \mu}{\di \Lcal^d}$ with respect to $\Lcal^d$ we have that $\Tan_C(\mu,x_0)$ contains only one measure, which is a constant multiple of $\Lcal^d \restrict C$. Hence, at $\Lcal^d$-almost every $x_0 \in \supp \mu$, all blow-up sequences are regular.

In some sense conversely to the O'Neil measure alluded to above, Preiss exhibited a positive, purely singular measure on a bounded interval (in particular a $\BV$-derivative) such that all tangent measures are a fixed multiple of Lebesgue measure, see Example 5.9(1) in~\cite{Prei87GMDR}. In our terminology above this means that all blow-ups at almost all the singular points are semi-regular.

The following result seems to have appeared first in Lemma~5.1 of~\cite{Lars98QWOJ}; we here give a slightly stronger version with a different proof.

\begin{lemma}[Strictly converging blow-ups] \label{lem:strict_blowup}
Let $\mu \in \Mbf(\R^d;\R^{m \times d})$. For $\abs{\mu}$-almost every $x_0 \in \supp \mu$ the following two assertions holds for all open convex sets $C \subset \R^d$:
\begin{itemize}
  \item[(i)] There exists $\tau \in \Tan_C(\mu,x_0)$ with $\abs{\tau}(C) = 1$, $\abs{\tau}(\partial C) = 0$.
  \item[(ii)] Moreover, there exists a sequence $r_n \todown 0$, such that the blow-up sequence $\gamma_n := c_n T_*^{(x_0,r_n)} \mu$ (as usual, $c_n := \abs{\mu}(C(x_0,r_n))^{-1}$) satisfies $\abs{\gamma_n}(\partial C) = 0$ and $\gamma_n \to \tau$ strictly in $\Mbf(\cl{C};\R^{m \times d})$
\end{itemize}
\end{lemma}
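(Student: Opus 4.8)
The strategy is to extract a single good sequence of radii that simultaneously controls the growth of $\abs{\mu}$ on dilated balls, puts no mass on the boundary $\partial C$ in the limit, and converges strictly. I would work throughout at a point $x_0$ in the full-measure set where the density equivalence \eqref{eq:Tan_abs} holds and where one can find $r_n \todown 0$ realizing the doubling-type bound \eqref{eq:blowup_loc_bdd}. Because of \eqref{eq:Tan_density} it suffices to prove both assertions for the positive measure $\abs{\mu}$: if $\sigma \in \Tan_C(\abs{\mu},x_0)$ is obtained as a strict limit of positive blow-ups $c_n T_*^{(x_0,r_n)}\abs{\mu}$ with $\sigma(\partial C)=0$, then setting $\tau := \frac{\di\mu}{\di\abs{\mu}}(x_0)\,\sigma$ and using \eqref{eq:Tan_abs} gives $\tau \in \Tan_C(\mu,x_0)$ with $\abs{\tau}=\sigma$, and strict convergence of the vector-valued blow-ups $c_n T_*^{(x_0,r_n)}\mu \to \tau$ follows from weak* convergence together with $c_n\abs{\mu}(C(x_0,r_n)) = \sigma_n(\cl C) \to \sigma(\cl C)$, once we know $\sigma(\partial C)=0$ (so that no mass escapes and the total variations converge on $\cl C$). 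So from now on $\mu \geq 0$.

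\textbf{Step 1: compactness and a candidate tangent measure.} Fix a radius sequence $s_n \todown 0$ satisfying \eqref{eq:blowup_loc_bdd} at $x_0$; then for each fixed $K$ the measures $c_n T_*^{(x_0,s_n)}\mu$ restricted to $K\Bbb^d \supset C$ are eventually uniformly bounded, so by a diagonal argument over $K$ and weak* compactness of bounded Radon measures we may pass to a subsequence (not relabeled) so that $c_n T_*^{(x_0,s_n)}\mu \toweakstar \sigma$ in $\Mbf(\Bbb^d)$ and in fact on every $K\Bbb^d$; in particular $\sigma$ is a well-defined nonzero element of $\Tan_C(\mu,x_0)$ with $\sigma(C) \leq \liminf c_n\mu(C(x_0,s_n)) \leq 1$ and, since $\overline{C} \subset K\Bbb^d$ for large $K$, also $\sigma(\overline C) \leq \beta_K < \infty$. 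The value $\sigma(C)$ need not be $1$ and $\sigma(\partial C)$ need not be $0$ yet — fixing this is the crux.

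\textbf{Step 2: killing the boundary mass by an averaging/Fubini argument.} The key point is that $\partial C$ has Lebesgue measure zero, and the dilated boundaries $\partial C(x_0,r) = x_0 + r\,\partial C$ are disjoint for distinct $r$; therefore $\int_0^1 \abs{\mu}\bigl(\partial C(x_0,r)\bigr)\,\di r = 0$, so for $\Lcal^1$-a.e.\ $r>0$ the sphere $\partial C(x_0,r)$ is $\mu$-null. I would use this to replace the sequence $s_n$ by a nearby sequence $r_n \in [s_n, (1+\epsilon_n)s_n]$ with $\epsilon_n \todown 0$ chosen so that (a) $\mu(\partial C(x_0,r_n))=0$ for every $n$ (possible on a full-measure set of $r$'s in each tiny interval), (b) the doubling bound \eqref{eq:blowup_loc_bdd} is preserved up to replacing $\beta_K$ by $\beta_{2K}$ or so (it is stable under comparable radii), and (c) $\mu(C(x_0,r_n))/\mu(C(x_0,s_n)) \to 1$, using that $\mu(C(x_0,(1+\epsilon)s_n)) - \mu(C(x_0,s_n))$ is small relative to $\mu(C(x_0,s_n))$ along a suitable choice — here one invokes a standard pigeonhole/monotone-rearrangement argument: among the radii in a dyadic family one can always find one across which the measure of the annulus is a vanishing fraction of the measure of the ball, because these annular increments telescope. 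With such $r_n$, define $\gamma_n := c_n T_*^{(x_0,r_n)}\mu$ with $c_n = \mu(C(x_0,r_n))^{-1}$, so that $\gamma_n(\cl C) = 1$ and $\gamma_n(\partial C) = 0$ for all $n$, and by Step 1 (passing to a further subsequence) $\gamma_n \toweakstar \tau$ for some $\tau \in \Tan_C(\mu,x_0)$.

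\textbf{Step 3: upgrading to strict convergence and checking $\abs{\tau}(\partial C)=0$.} Since $C$ is open, weak* convergence gives $\tau(C) \leq \liminf \gamma_n(C) \leq 1$; since $\cl C$ is compact and $\gamma_n(\cl C)=1$, lower semicontinuity on the open set $\R^d \setminus \cl C$ (all $\gamma_n$ vanish there) plus weak* convergence of the masses on $\cl C$ is not automatic — instead I would argue directly: test against a continuous function $\chi$ with $\ONE_{\cl C} \leq \chi \leq \ONE_{C'}$ for a slightly larger convex $C'$ with $\tau(\partial C')=0$ (such $C'$ exists for a.e.\ dilation factor by the same Fubini argument), giving $\tau(\cl C) \leq \tau(C') = \lim \gamma_n(C') = \lim \gamma_n(\cl C) = 1$ provided $\gamma_n$ eventually puts no mass between $\cl C$ and $C'$; that last point again follows by choosing the enlargement factor so that the corresponding annulus is $\mu$-thin along the sequence. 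Combined with $\tau(C) \geq \limsup \gamma_n(\cl C) - \tau(\partial C') $-type reasoning one concludes $\tau(\cl C) = 1 = \tau(C)$, hence $\tau(\partial C) = 0$, which is assertion (i) for $\mu$ (and then for the original vector measure via Step 0). Finally, $\gamma_n(\cl C) = 1 = \tau(\cl C)$ together with $\gamma_n \toweakstar \tau$ is exactly strict convergence in $\Mbf(\cl C)$, giving (ii).

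\textbf{Main obstacle.} The delicate part is Step 2–3: the honest bookkeeping needed to choose the radii $r_n$ (and the enlarged sets $C'$) so that \emph{all} of the competing requirements hold at once — no boundary mass, preserved doubling, and vanishing annular increments — at one and the same $\abs{\mu}$-a.e.\ point. This is where one must be careful that the "pigeonhole over dyadic radii" argument producing a thin annulus does not conflict with the a.e.-in-$r$ choice that makes the sphere null; the resolution is that both are generic conditions on $r$ (one holds for a.e.\ $r$, the other for a positive-proportion subset of any dyadic block), so a common good $r_n$ in each block $[s_{n},2s_n]$ exists, and letting $s_n \todown 0$ fast enough diagonalizes everything. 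Once the radii are fixed, the convergence statements are soft consequences of weak* compactness and the definition of strict convergence.
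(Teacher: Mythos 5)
Your reduction to the positive measure $\abs{\mu}$ via \eqref{eq:Tan_abs}--\eqref{eq:Tan_density} and your use of \eqref{eq:blowup_loc_bdd} for compactness are fine, and you correctly identify the crux: preventing the blow-up limit from charging $\partial C$. But the argument you offer for that crux does not close. Choosing $r_n$ with $\abs{\mu}(\partial C(x_0,r_n))=0$ gives $\abs{\gamma_n}(\partial C)=0$ for each $n$, but says nothing about $\abs{\tau}(\partial C)$: mass can still concentrate onto $\partial C$ in the limit, both from inside $C$ (forcing $\abs{\tau}(C)<1$) and from outside $\cl{C}$ (forcing $\abs{\tau}(\cl{C})>1$). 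Your fix in Step 3 needs a \emph{fixed} enlargement $C'=(1+\delta)C$ with $\gamma_n(C'\setminus\cl{C})\to 0$, obtained by a pigeonhole over dyadic radii. That pigeonhole, however, only yields, for a fixed ratio $1+\delta$ and the fixed number $N\approx\log K/\log(1+\delta)$ of steps that fit inside the doubling window, one radius whose annular increment ratio is at most $\beta_K^{1/N}$ --- a fixed constant $>1$, not a quantity tending to $1$ as $n\to\infty$ (for Lebesgue measure the $(1+\delta)$-annulus always carries the fraction $(1+\delta)^d-1$ of the mass, so "vanishing fraction" is simply false for fixed $\delta$). Forcing the increment to vanish requires either $\delta=\delta_n\todown 0$ (then $C'$ is no longer fixed and $\lim_m\gamma_m(C'_n)$ is uncontrolled for $m\neq n$) or $N\to\infty$ (then the radii leave the window where \eqref{eq:blowup_loc_bdd} holds). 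The same obstruction affects the inner estimate you would need for $\abs{\tau}(C)=1$. So the "common good $r_n$ in each block" invoked in your final paragraph is not produced by the arguments given; this is a genuine gap, not mere bookkeeping.

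The paper avoids the problem by reversing the order of quantifiers: it first blows up with respect to \emph{balls} $B(x_0,\rho_n)$, normalized by $\abs{\mu}(B(x_0,\rho_n))$, and extracts a limit $\tilde{\tau}$ on a large ball $K\Bbb^d$ compactly containing $\eta C$; only \emph{then} does it choose the dilation factor $\eta$ of the convex window so that $\abs{\tilde{\tau}}(\partial(\eta C))=0$, which is possible for all but countably many $\eta$ because $\tilde{\tau}$ is already fixed. After renormalizing by $\abs{\tilde{\tau}}(\eta C)$, convergence of the masses on $C$ is automatic from the null boundary; the remaining work is a small perturbation of the radii (to get $\abs{\gamma_n}(\partial C)=0$ and an outer-regularity bound), controlled by an explicit Lipschitz estimate, plus the observation that the two normalizing constants satisfy $c_n/b_n\to 1$. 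If you want to keep your structure, you need some version of this "choose the window after the limit" step.
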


For the assertions $\abs{\tau}(\partial C) = 0$, $\abs{\gamma_n}(\partial C) = 0$, $\tau$ and $\gamma_n$ are to be considered as measures on $\cl{C}$.

\begin{proof}
Let $x_0 \in \supp \mu$ be such that condition~\eqref{eq:blowup_loc_bdd} is satisfied for a sequence $\rho_n \todown 0$; this is the case for $\abs{\mu}$-almost every $x_0 \in \Omega$.

Pick $\eta > 0$, $K \in \N$ such that $\Bbb^d \subset\subset \eta C \subset\subset K \Bbb^d$. Then, let $a_n T_*^{(x_0,\rho_n)} \mu \toweakstar \tilde{\tau}$ in $\Mbf(K\Bbb^d;\R^{m \times d})$, possibly selecting a subsequence of the $\rho_n$s, and with $a_n := \abs{\mu}(B(x_0,\rho_n))^{-1}$. Hence, $\abs{\tilde{\tau}}(\eta C) > 0$. By~\eqref{eq:Tan_abs}, also $a_n T_*^{(x_0,\rho_n)} \abs{\mu} \toweakstar \abs{\tilde{\tau}}$ and slightly increasing $\eta$ if necessary, we may also assume $\abs{\tilde{\tau}}(\partial (\eta C)) = 0$.

Set $b_n := \abs{\tilde{\tau}}(\eta C)^{-1} a_n$, $\tau := \abs{\tilde{\tau}}(\eta C)^{-1} \tilde{\tau}(\eta \frarg) = \abs{\tilde{\tau}}(\eta C)^{-1} T_*^{(0,\eta)} \tilde{\tau}$, and observe
\[
  b_n T_*^{(x_0,\eta\rho_n)} \mu \toweakstar \tau,  \qquad
  b_n T_*^{(x_0,\eta \rho_n)} \abs{\mu} \toweakstar \abs{\tau}
  \qquad\text{in $\Mbf(\cl{C};\R^{m \times d})$}.
\]
From $\abs{\tau}(\partial C) = \abs{\tilde{\tau}}(\eta C)^{-1} \abs{\tilde{\tau}}(\partial (\eta C)) = 0$, standard results in measure theory allow us to infer
\begin{equation} \label{eq:eta_rho_j_strict}
  b_n T_*^{(x_0,\eta \rho_n)} \abs{\mu}(C) \to \abs{\tau}(C) = 1.
\end{equation}

Now pick a sequence $r_n \todown 0$ such that
\begin{itemize}
\item[(I)] $\eta \rho_n \leq r_n \leq \eta \rho_n(1+1/(b_n n))$,
\item[(II)] $T_*^{(x_0,r_n)} \mu(\partial C) = 0$ (from finiteness), and
\item[(III)] $b_n T_*^{(x_0,r_n)} \absb{\mu}(C) \leq b_n T_*^{(x_0,\eta \rho_n)} \absb{\mu}(C) + 1/n$ (from outer regularity).
\end{itemize}
Then, for all $\psi \in \Crm_0^1(K\Bbb^d;\R^{m \times d})$ with Lipschitz constant $L$ say,
\begin{align*}
  &b_n \absBB{ \! \int \psi \cdot \di T_*^{(x_0,r_n)} \mu
    - \int \! \psi \cdot \di T_*^{(x_0,\eta \rho_n)} \mu} \\
  &\qquad \leq b_n \int \absBB{ \psi \Bigl(\frac{x-x_0}{r_n}\Bigr)
    - \psi \Bigl(\frac{x-x_0}{\eta \rho_n}\Bigr) } \dd \abs{\mu}(x) \\
  &\qquad \leq b_n L \int_{B(x_0,K r_n)} \absBB{\frac{x-x_0}{r_n} - \frac{x-x_0}{\eta \rho_n}} \dd \abs{\mu}(x),
\end{align*}
and this goes to zero as $n \to \infty$, because by (I)
\[
  \absBB{\frac{x-x_0}{r_n} - \frac{x-x_0}{\eta \rho_n}}
  \leq \frac{\abs{\eta \rho_n - r_n}}{\eta \rho_n} \cdot
  \absBB{\frac{x-x_0}{r_n}} \leq \frac{K}{b_n n}
  \qquad\text{for all $x \in B(x_0,K r_n)$}
\]
Thus, in particular,
\begin{equation} \label{eq:bn_blowup}
\begin{aligned}
  &b_n T_*^{(x_0,r_n)} \mu \toweakstar \tau
  \quad\text{in $\Mbf(\cl{C};\R^{m \times d})$} \qquad \text{and} \\
  &b_n T_*^{(x_0,r_n)} \abs{\mu}(C) \to \abs{\tau}(C) = 1,
\end{aligned}
\end{equation}
where for the second assertion we also used (III) together with~\eqref{eq:eta_rho_j_strict}.

It remains to show that $c_n T_*^{(x_0,r_n)} \mu \toweakstar \tau$ in $\Mbf(C;\R^{m \times d})$. To this effect observe
\[
  1 = \lim_{n\to\infty} c_n T_*^{(x_0,r_n)} \abs{\mu}(C)
  = \lim_{n\to\infty} \frac{c_n}{b_n} \cdot \lim_{n\to\infty}
  b_n T_*^{(x_0,r_n)} \abs{\mu}(C)
  = \lim_{n\to\infty} \frac{c_n}{b_n}
\]
and hence we may replace $b_n$ by $c_n$ in~\eqref{eq:bn_blowup}.
\end{proof}

\subsection{Rigidity}

In this section we establish that functions $u \in \BV(\Omega;\R^m)$ with the property that $Du = P \abs{Du}$, where $P \in \R^{m \times d}$ is a fixed matrix, have a very special structure. The origins of this observation can be traced back to Hadamard's jump condition, Proposition~2 in~\cite{BalJam87FPMM}, and Lemma~1.4 of~\cite{DeLe09NARO}; also see the proof of Theorem~3.95 in~\cite{AmFuPa00FBVF}. Other rigidity results may be found in~\cite{Mull99VMMP,Kirc03RGM,KiMuSv03SNPG} and the references cited therein.

\begin{lemma}[Rigidity of $\BV$-functions] \label{lem:BV_rigidity}
Let $C \subset \R^d$ be open and convex (not necessarily bounded), and let $u \in \BV(C;\R^m)$ such that $Du = P \abs{Du}$, or equivalently $\frac{\di Du}{\di \abs{Du}}(x) = P$ almost everywhere, where $P \in \R^{m \times d}$, $\abs{P} = 1$, is a fixed matrix.
\begin{itemize}
  \item[(i)]  If $\rk P \geq 2$, then $u(x) = u_0 + \alpha Px$ (a.e.), where $\alpha \in \R$, $u_0 \in \R^m$.
  \item[(ii)] If $P = a \otimes \xi$ ($a \in \R^m$, $\xi \in \Sbb^{d-1}$), then there exist $\psi \in \BV(\R)$, $u_0 \in \R^m$ such that $u(x) = u_0 + \psi(x \cdot \xi)a$ (a.e.).
\end{itemize}
\end{lemma}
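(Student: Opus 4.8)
The plan is to exploit the fact that $Du = P\abs{Du}$ forces two algebraic constraints on $P$: one from the distributional curl-free condition ($Du$ is a derivative, so $\partial_j (Du)^i_k = \partial_k (Du)^i_j$ in the sense of distributions), and one from the geometric fact that the measure $\abs{Du}$ is concentrated on level sets of $u$, whose normals must be orthogonal to the range of $P^T$. Concretely, writing $\mu := \abs{Du} \in \Mbf(C)$ (a positive scalar measure), the hypothesis says $Du = P\mu$, so $\Curl(P\mu) = 0$ componentwise. Since $P$ is constant, this reads $P^i_j \partial_k \mu = P^i_k \partial_j \mu$ as distributions, i.e.\ for each fixed $i$ the vector $\nabla \mu \in \Mbf(C;\R^d)$ (if it exists) is parallel to the $i$-th row of $P$; more robustly, the distribution $P^i_j \partial_k \mu - P^i_k \partial_j \mu$ vanishes for all $i,j,k$. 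I would first reduce to the case $m=1$ by noting that each scalar component $u^i$ satisfies $Du^i = (\text{row }i\text{ of }P)\,\mu$, so all components have mutually parallel derivative measures; thus $u$ is, up to an affine change, determined by a single scalar BV function times a fixed vector $a$, plus possibly a genuinely $d$-dimensional piece if two rows are linearly independent.

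For part (ii), $P = a\otimes\xi$ with $\abs{a}=\abs{\xi}=1$: then $Du = a\otimes\xi\,\mu$, so $Du$ has rank-one polar everywhere and $D u^i = a^i \xi \mu$. Setting $w := u\cdot a/\abs a^2$ (or more simply working with the scalar $w$ whose derivative is $\xi\mu$), we get $Dw = \xi\mu$, i.e.\ $\partial_j w = \xi_j \mu$. Then for any two directions $e_j, e_k$ with $\xi_k \neq 0$ we have $\xi_k \partial_j w = \xi_j \partial_k w$, which says exactly that the distributional gradient of $w$ points in the direction $\xi$ everywhere; equivalently all directional derivatives $\partial_\eta w$ with $\eta \perp \xi$ vanish. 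A standard mollification/Fubini argument (slice $C$ by hyperplanes orthogonal to $\xi$, using that $C$ convex so slices are convex hence connected) then gives $w(x) = \psi(x\cdot\xi)$ for some $\psi \in \BV_\loc(\R)$, and since $\mu(C) < \infty$ in fact $\psi \in \BV$ on the relevant interval (extend by constants). Finally $u = u_0 + \psi(x\cdot\xi)a$ follows because $D(u - \psi(x\cdot\xi)a) = 0$ on the connected set $C$.

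For part (i), $\rk P \geq 2$: pick two linearly independent rows, say rows $1$ and $2$ of $P$, call them $p_1, p_2 \in \R^d$. From $\Curl(P\mu)=0$ applied to row $1$ we get that $\nabla\mu$ (as a distribution) is parallel to $p_1$; from row $2$, parallel to $p_2$. Since $p_1, p_2$ are linearly independent this forces $\nabla\mu = 0$ as a distribution on the connected open set $C$, hence $\mu = c\,\Lcal^d\restrict C$ for some constant $c \geq 0$. But then $Du = cP\,\Lcal^d\restrict C$, so $u \in W^{1,1}_\loc$ with constant gradient $cP$, whence $u(x) = u_0 + cPx$ on $C$; writing $\alpha = c$ (and noting $\abs{P}=1$ is already built in) gives the claim. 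The one subtlety is making the step "$\Curl(P\mu)=0$ and $\rk P\geq2$ imply $\mu$ is a multiple of Lebesgue measure" rigorous at the level of distributions rather than assuming $\mu$ has a density a priori; I would handle this by mollifying, $\mu_\varepsilon := \mu * \rho_\varepsilon$, deriving $\nabla\mu_\varepsilon \parallel p_1$ and $\parallel p_2$ pointwise on the interior, concluding $\nabla\mu_\varepsilon \equiv 0$ hence $\mu_\varepsilon$ constant on connected components, and passing to the limit.

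The main obstacle I anticipate is not any single deep step but the careful bookkeeping in the reduction: organizing the rank cases, ensuring the convexity of $C$ is genuinely used (to guarantee slices and the whole domain are connected, so that "gradient zero" upgrades to "constant"), and correctly tracking that the one-dimensional profile $\psi$ lands in $\BV$ (not merely $\BV_\loc$) — this requires using that $\abs{Du}(C) = \mu(C) < \infty$ and relating it to the total variation of $\psi$ via the coarea/Fubini slicing, keeping in mind $C$ may be unbounded so $\psi$ may need to be interpreted on a possibly infinite interval where it nonetheless has finite total variation. The algebraic heart — that a constant matrix $P$ against which a positive measure $\mu$ satisfies $\Curl(P\mu)=0$ must have $\mu$ "one-directional" in a sense dictated by $\rk P$ — is elementary linear algebra once the distributional curl condition is written out, so I expect that to go through cleanly.
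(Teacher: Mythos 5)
Your proposal is correct and follows essentially the same route as the paper: the curl-free condition on $Du = P\,\abs{Du}$ yields the algebraic identity $P^i_j\partial_k\mu = P^i_k\partial_j\mu$, which after mollification forces $\mu$ to be a constant multiple of Lebesgue measure when $\rk P\geq 2$, and forces one-directionality (via convexity of $C$ and the vanishing of derivatives orthogonal to $\xi$) when $P = a\otimes\xi$. The only cosmetic differences are that the paper mollifies $u$ itself (writing $\nabla u = Pg$ with $g$ smooth) rather than the scalar measure $\mu$, and your sections-style argument for part (ii) is essentially the alternative proof the paper records in the remark following the lemma.
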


\begin{proof}
First assume $u \in (\Wrm^{1,1} \cap \Crm^\infty)(C;\R^m)$. The idea of the proof is that the curl of $\nabla u$ vanishes, i.e.\
\[
  \partial_i (\nabla u)_j^k = \partial_j (\nabla u)_i^k  \qquad\text{for all $i,j = 1,\ldots,d$ and $k=1,\ldots,m$.}
\]
For our special $\nabla u = P g$, where $g \in \Crm^\infty(C)$ is a smooth function, this gives the conditions
\begin{equation} \label{eq:Curl_ker}
  P_j^k \partial_i g = P_i^k \partial_j g  \qquad\text{for all $i,j = 1,\ldots,d$ and $k=1,\ldots,m$.}
\end{equation}
Under the assumptions of (i), we claim that $\nabla g \equiv 0$. If otherwise $\xi(x) := \nabla g(x) \neq 0$ for some $x \in C$, then with $a_k(x) := P_j^k / \xi_j(x)$ ($k = 1,\ldots,m$) for any $j$ such that $\xi_j(x) \neq 0$ (the quantity $a_k(x)$ is well-defined by the relation~\eqref{eq:Curl_ker}), we have $P_j^k = a_k(x) \xi_j(x)$, which immediately implies $P = a(x) \otimes \xi(x)$. This, however, is impossible if $\rk P \geq 2$. Hence, $\nabla g \equiv 0$ and $u$ is an affine function, which must be of the form exhibited in assertion (i).

For part (ii), that is $P = a \otimes \xi$, we additionally assume $a \neq 0$. Observe that in this case $\nabla g(x) = \theta(x) \xi^T$ for some function $\theta \in \Crm^\infty(C)$. Indeed,~\eqref{eq:Curl_ker} entails
\[
  \xi \nabla g(x) = \nabla g(x)^T \xi^T \in \R^{d \times d},
\]
which gives the projection relation
\[
  \nabla g(x) = (\xi \cdot \nabla g(x)^T)\xi^T =: \theta(x)\xi^T.
\]
Next, since level sets of a function are always orthogonal to the function's gradient, we infer that $g$, hence $\theta$, is constant on all hyperplanes orthogonal to $\xi$ intersected with $C$. As $C$ is assumed convex, we may therefore write
\[
  \theta(x) = \tilde{\theta}(x \cdot \xi),  \qquad x \in C
\]
with $\tilde{\theta} \in \Crm^\infty(\R)$. Taking $\psi \in \Crm^\infty(\R)$ with $\psi'' = \tilde{\theta}$, we have
\[
  \nabla \bigl[ g(x) - \psi'(x \cdot \xi) \bigr]
  = \theta(x)\xi^T - \psi''(x \cdot \xi)\xi^T = 0,
\]
whence $\nabla u(x) = P g(x) = (a \otimes \xi) \psi'(x \cdot \xi)$ (absorb any constant into $\psi'$). Thus, $u(x) = u_0 + \psi(x \cdot \xi)a$ for some $u_0 \in \R^m$.

For general $u$ as in the statement of the proposition, we employ a mollification argument as follows: Consider a convex subdomain $C' \subset\subset C$ and mollify the original $u$ by a smooth kernel with support inside $B(0,d)$, where $d > 0$ is the distance from $C'$ to $\R^d \setminus C$. Then, in $C'$ this yields a smooth function $\tilde{u} \in (\Wrm^{1,1} \cap \Crm^\infty)(C';\R^m)$, which still satisfies $D\tilde{u} = P_0 \abs{D\tilde{u}}$, and we can apply the above reasoning to that function. Since $C'$ was arbitrary, we conclude the proof.
\end{proof}

\begin{remark}
Statement (ii) can also be proved in a slightly different, less elementary fashion using the theory of one-dimensional sections of $\BV$ functions (see e.g.~\cite[Section~3.11]{AmFuPa00FBVF}). Let $P = a \otimes \xi$ and pick any $b \perp \xi$. Then, the theory of sections implies that (the scalar product here is to be taken row-wise)
\[
 Du \cdot b = \Lcal^{d-1} \restrict \Omega_b \otimes Du_y^b = \int_{\Omega_b} Du_y^b \dd y,
\]
where $\Omega_b$ is the orthogonal projection of $\Omega$ onto the hyperplace orthogonal to $b$ and $u_y^b(t) := u(y + tb)$ for any $y \in \Omega_b$ and $t \in \R$ such that $y + tb \in \Omega$. Also,
\[
 \absb{Du \cdot b} = \Lcal^{d-1} \restrict \Omega_b \otimes \absb{Du_y^b}.
\]
For $Du = P \abs{Du}$ we have $\abs{Du \cdot b} = \abs{a \xi^T b} \abs{Du} = 0$ and hence $\abs{Du_y^b} = 0$ for almost every $y \in \Omega_b$. But this implies that $u$ is constant in direction $b$. As $b \perp \xi$ was arbitrary, $u(x)$ can only depend on $x \cdot \xi$ and we have shown the claim.
\end{remark}

\begin{remark}[Differential inclusions] \label{rem:rigidity}
Restating the preceding lemma, we have proved rigidity for the differential inclusion
\begin{equation} \label{eq:diff_incl}
  Du \in \mathrm{span} \{P\},  \qquad u \in \BV(C;\R^m),
\end{equation}
which, if we additionally assume $\abs{P} = 1$, is to be interpreted as
\[
  \frac{\di Du}{\di \abs{Du}}(x) = P \qquad\text{for $\abs{Du}$-a.e.\ $x \in C$.}
\]
Notice that for $u \in \Wrm^{1,1}(C;\R^m)$, this simply means
\[
  \nabla u(x) \in \mathrm{span} \{P\}, \qquad\text{for a.e.\ $x \in C$.}
\]
Rigidity here refers to the fact that~\eqref{eq:diff_incl} has either only affine solutions if $\rk P \geq 2$, or one-directional solutions (\enquote{plane waves}) in direction $\xi$ if $P = a \otimes \xi$. For the terminology also cf.\ Definition~1.1 in~\cite{Kirc03RGM}.
\end{remark}

The following corollary is not needed in the sequel, but is included for completeness.

\begin{corollary}[Rigidity for approximate solutions]
If $P \in \R^{m \times d}$ with $\rk P \geq 2$, and $(u_j) \subset \Wrm^{1,\infty}(\Omega;\R^m)$ is a sequence such that
\begin{align*}
  u_j &\toweakstar u \quad\text{in $\Wrm^{1,\infty}(\Omega;\R^m)$}  \qquad\text{and} \\
  \dist \bigl( \nabla u_j,\mathrm{span} \{P\} \bigr) &\to 0 \quad\text{in measure,}
\end{align*}
then even
\[
  \nabla u_j \to \const  \quad\text{in measure.}
\]
\end{corollary}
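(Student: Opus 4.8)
The plan is to run a Young-measure argument: extract a gradient Young measure generated by $(\nabla u_j)$, use the differential constraint to confine its support to the line $\mathrm{span}\{P\}$, then exploit one quadratic null Lagrangian (a nonvanishing $2\times2$ minor, which exists precisely because $\rk P\geq2$) to show the Young measure is elementary, and finally recover the constancy of its barycenter from the curl-free computation already performed in Lemma~\ref{lem:BV_rigidity}.

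First I would note that since $(u_j)$ is bounded in $\Wrm^{1,\infty}$, the $\nabla u_j$ are uniformly bounded in $\Lrm^\infty$, so after passing to a subsequence $\nabla u_j\toY\nu$ for some $\nu\in\GYbf(\Omega;\R^{m\times d})$; the uniform bound rules out concentration, so $\lambda_\nu=0$, each $\nu_x$ is supported in a fixed ball, and $\dpr{\id,\nu_x}=\nabla u(x)$ a.e. The function $A\mapsto\dist(A,\mathrm{span}\{P\})$ is continuous and positively $1$-homogeneous (as $\mathrm{span}\{P\}$ is a cone), hence lies in $\Ebf(\Omega;\R^{m\times d})$; testing against it and using that $\dist(\nabla u_j,\mathrm{span}\{P\})\to0$ in measure (hence in $\Lrm^1$, by the $\Lrm^\infty$-bound) gives $\int_\Omega\dpr{\dist(\frarg,\mathrm{span}\{P\}),\nu_x}\dd x=0$, so $\supp\nu_x\subseteq\mathrm{span}\{P\}$ for a.e.\ $x$. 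Thus $\nu_x$ is the image under $t\mapsto tP$ of a probability measure $\sigma_x$ on a bounded interval of $\R$. Because $\rk P\geq2$, some $2\times2$ minor of $P$ is nonzero; let $M\colon\R^{m\times d}\to\R$ be the corresponding $2\times2$-minor function, so $M$ is quasiaffine and $M(tP)=c\,t^2$ with $c\neq0$. Combining the classical weak continuity of $2\times2$ minors along $W^{1,\infty}$-weak-$*$ convergent sequences with the definition of the generated Young measure identifies $\dpr{M,\nu_x}=M(\nabla u(x))$ a.e., and evaluating on $\supp\nu_x$ yields
\[
  c\int t^2\dd\sigma_x(t)=\dpr{M,\nu_x}=M(\nabla u(x))=M\Bigl(P\!\int t\dd\sigma_x(t)\Bigr)=c\Bigl(\int t\dd\sigma_x(t)\Bigr)^2,
\]
so the variance of $\sigma_x$ vanishes, i.e.\ $\sigma_x=\delta_{\bar t(x)}$ and $\nu_x=\delta_{\bar t(x)P}=\delta_{\nabla u(x)}$ a.e.\ with $\bar t\in\Lrm^\infty(\Omega)$ and $\nabla u=\bar t P$ a.e. Since a sequence generating an elementary Young measure converges in measure to its barycenter, $\nabla u_j\to\nabla u$ in measure.

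It then remains to see that $\bar t$ is constant. As $\nabla u=\bar t P$ is a distributional gradient, mollification gives $\nabla(u\conv\eta_\epsilon)=(\bar t\conv\eta_\epsilon)\,P$ on compactly contained subdomains; plugging $g:=\bar t\conv\eta_\epsilon$ into the curl-free relations~\eqref{eq:Curl_ker} forces $\nabla(\bar t\conv\eta_\epsilon)\equiv0$ exactly as in the proof of Lemma~\ref{lem:BV_rigidity}(i) — this is again where $\rk P\geq2$ (i.e.\ $P\neq a\otimes\xi$) enters — so $\bar t$ is constant on connected components of $\Omega$. Hence $\nabla u_j\to\const$ in measure.

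I expect the genuine obstacle to be the step showing $\nu_x$ is a Dirac mass: the inclusion $\supp\nu_x\subseteq\mathrm{span}\{P\}$ does \emph{not} by itself trivialize a gradient Young measure (being supported on a rank-one-connection-free set is insufficient in general), and the essential point is that $\mathrm{span}\{P\}$ is a \emph{line} on which the quasiaffine function $M$ restricts to a nondegenerate quadratic, so no mass can be spread. A streamlined variant avoiding Young measures sets $t_j:=\abs{P}^{-2}\dpr{\nabla u_j,P}$, uses weak continuity of minors to deduce $t_j^2\toweakstar(\wslim_j t_j)^2$, concludes $t_j\to\wslim_j t_j$ strongly in $\Lrm^2_{\loc}$ (hence in measure), and then $\nabla u_j=t_jP+(\nabla u_j-t_jP)\to\const$ in measure, the last term vanishing in measure by hypothesis.
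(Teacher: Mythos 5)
Your proof is correct, but it is a genuinely self-contained argument where the paper offers only a two-line citation: the paper's proof of this corollary reads \enquote{This follows from Lemma~2.7~(ii) in \cite{Mull99VMMP} and the Rigidity Lemma}, i.e.\ it outsources the approximate-rigidity step to M\"{u}ller's lecture notes. What you have written is essentially a reconstruction of the mechanism behind that cited lemma, adapted directly to the set $\mathrm{span}\{P\}$: confine the (concentration-free) gradient Young measure to the line, and use that a nonvanishing $2\times 2$ minor restricts to a nondegenerate quadratic on that line, so weak continuity of minors forces the fibre-wise variance to vanish; your closing remark correctly identifies this as the crux, since support in a rank-one-connection-free set alone does not trivialize a gradient Young measure. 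Your streamlined $L^2_{\loc}$ variant with $t_j:=\abs{P}^{-2}\dpr{\nabla u_j,P}$ is a clean way to avoid Young measures entirely. Two small points of care, both of which you in effect handle correctly: (a) the limit $\nabla u=\bar t\,P$ does not literally satisfy the hypothesis $Du=P\abs{Du}$ of Lemma~\ref{lem:BV_rigidity} when $\bar t$ changes sign (the polar would be $\pm P/\abs{P}$), so it is right that you re-run the curl computation~\eqref{eq:Curl_ker} with $g=\bar t\conv\eta_\epsilon$ rather than citing the lemma as a black box --- note that case (i) of that computation needs no convexity of the domain, only connectedness for the final constancy; and (b) since you pass to a subsequence to generate $\nu$, one should add the routine remark that the in-measure limit $\nabla u$ is determined by the full weak* limit $u$, so the whole sequence converges in measure. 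Neither affects the validity of the argument.
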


\begin{proof}
This follows from Lemma~2.7~(ii) in~\cite{Mull99VMMP} and the Rigidity Lemma.
\end{proof}

The following lemma applies to all types of blow-ups and is essentially a local version of the Rigidity Lemma~\ref{lem:BV_rigidity}.

\begin{lemma}[Local structure of $BV$-derivatives] \label{lem:BV_local_structure}
Let $u \in \BV(\Omega;\R^m)$ and let $C \subset \R^d$ be an open convex set containing the origin. Then, for every $x_0 \in \supp Du$, each $\tau \in \Tan_C(Du,x_0)$ is a $\BV$-derivative, $\tau = Dv$ for some $v \in \BV(C;\R^m)$, and with $P_0 := \frac{\di Du}{\di \abs{Du}}(x_0)$ (assume that this exists as a limit) it holds that:
\begin{itemize}
  \item[(i)]  If $\rk P_0 \geq 2$, then $v(x) = v_0 + \alpha P_0 x$ (a.e.), where $\alpha \in \R$, $v_0 \in \R^m$.
  \item[(ii)] If $P_0 = a \otimes \xi$ ($a \in \R^m$, $\xi \in \Sbb^{d-1}$), then there exist $\psi \in \BV(\R)$, $v_0 \in \R^m$ such that $v(x) = v_0 + \psi(x \cdot \xi)a$ (a.e.).
\end{itemize}
\end{lemma}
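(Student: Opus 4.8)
The statement is a localization of the global Rigidity Lemma~\ref{lem:BV_rigidity}, so the strategy is to transfer the blow-up picture into a setting where that lemma applies. The first step is to show that every $\tau \in \Tan_C(Du,x_0)$ is itself a $\BV$-derivative on $C$. This is a standard fact: each rescaled measure $c_n T_*^{(x_0,r_n)} Du$ equals $D w_n$ where $w_n(x) := c_n r_n^{-1} \cdot (\text{translate/rescale of } u)$ — more precisely, if $u_n(x) := c_n r_n^{-1}\, u(x_0 + r_n x)$ then $D u_n = c_n T_*^{(x_0,r_n)} Du$ on $C$ (up to the constant $r_n^{d-1}/r_n^d$ bookkeeping, which I would state carefully). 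After subtracting the mean on a fixed ball and using the uniform bound $|D u_n|(C) \le \beta_K < \infty$ coming from~\eqref{eq:blowup_loc_bdd} together with Poincaré, a subsequence of the $u_n$ converges in $\Lrm^1_{\loc}(C;\R^m)$ to some $v \in \BV(C;\R^m)$ with $Dv = \tau$. One has to be slightly careful that the $\BV$-bounds are only local in $C$, so $v \in \BVloc(C;\R^m)$ a priori; but since $|\tau|(C) < \infty$ we actually get $v \in \BV(C;\R^m)$.

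**Identifying the polar.** The second step is to show $Dv = P_0\,|Dv|$, i.e.\ the polar of $\tau$ is the constant matrix $P_0 = \frac{\di Du}{\di|Du|}(x_0)$. This is exactly the content of~\eqref{eq:Tan_density}: at $|Du|$-a.e.\ $x_0$ we have $\Tan_C(Du,x_0) = P_0 \cdot \Tan_C(|Du|,x_0)$, so any $\tau \in \Tan_C(Du,x_0)$ is of the form $P_0\,\sigma$ with $\sigma$ a positive measure, and since $|P_0| = 1$ (recall $P_0$ is a unit-sphere-valued polar) we get $|\tau| = \sigma$ and hence $\frac{\di\tau}{\di|\tau|} = P_0$ $|\tau|$-a.e. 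Translating back to $v$: $\frac{\di Dv}{\di|Dv|}(x) = P_0$ for $|Dv|$-a.e.\ $x \in C$, which is precisely the hypothesis of Lemma~\ref{lem:BV_rigidity} (with the fixed matrix $P = P_0$, normalized $|P_0| = 1$). A minor point: one must note that points $x_0$ where $\frac{\di Du}{\di|Du|}(x_0)$ exists as a genuine limit are $|Du|$-a.e.\ points and also points where~\eqref{eq:Tan_density} is available, so the hypothesis "assume this exists as a limit" is harmless.

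**Conclusion.** With $v \in \BV(C;\R^m)$ and $Dv = P_0\,|Dv|$ in hand, I simply invoke Lemma~\ref{lem:BV_rigidity}: case~(i), $\rk P_0 \ge 2$, forces $v(x) = v_0 + \alpha P_0 x$; case~(ii), $P_0 = a\otimes\xi$, forces $v(x) = v_0 + \psi(x\cdot\xi)a$ for some $\psi \in \BV(\R)$. (The degenerate possibilities $P_0 = 0$ or $a = 0$ are excluded since $|P_0| = 1$; if $a = 0$ then $P_0 = 0$, contradiction, so in case~(ii) we may take $a \ne 0$ as required by the proof of the global lemma.) This completes the argument.

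**Main obstacle.** The one genuinely delicate point is the very first step — verifying that the weak* limit $\tau$ of a blow-up sequence is a $\BV$-derivative on \emph{all} of $C$ and not merely on compactly contained subsets. The rescaled functions $u_n$ only enjoy uniform $\BV$ bounds on $K\Bbb^d$ for each fixed $K$ via~\eqref{eq:blowup_loc_bdd}, so the natural compactness gives $v \in \BVloc(C;\R^m)$; one then needs the finiteness $|\tau|(C) < \infty$ (true because $\tau$ is a finite measure on $C$ by construction as a weak* limit of probability-mass-bounded measures restricted to $C$) plus lower semicontinuity of total variation to upgrade to $v \in \BV(C;\R^m)$. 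Everything else is a direct citation of~\eqref{eq:Tan_density} and Lemma~\ref{lem:BV_rigidity}.
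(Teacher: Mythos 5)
Your proof is correct and follows the paper's argument essentially verbatim: rescale and recentre $u$ to obtain $v_n$ with $Dv_n = c_n T_*^{(x_0,r_n)}Du$, use the Poincar\'{e} inequality to pass to a limit $v \in \BV(C;\R^m)$ with $Dv = \tau$, identify the constant polar via~\eqref{eq:Tan_density}, and invoke Lemma~\ref{lem:BV_rigidity}. The only remark worth making is that your \enquote{main obstacle} dissolves under the paper's normalization: since $c_n = \abs{Du}(C(x_0,r_n))^{-1}$, one has $\abs{Dv_n}(C) = 1$ exactly, so the $\BV$-bound is already global on $C$ and no appeal to~\eqref{eq:blowup_loc_bdd} or a local-to-global upgrade is needed.
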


Notice that we can indeed treat \emph{all} $x_0 \in \supp Du$.

\begin{proof}
Assume $\gamma_n := c_n T^{(x_0,r_n)}_* Du \toweakstar \tau$ in $\Mbf(C;\R^m)$, where as usual we have set $c_n := \abs{Du}(C(x_0,r_n))^{-1}$ (recall $C(x_0,r_n) := x_0 + r_n C$). Let $v_n \in \BV(C;\R^m)$ be defined by
\[
  v_n(y) := \frac{r_n^{d-1}}{\abs{Du}(C(x_0,r_n))} \bigl( u(x_0 + r_n y) - \bar{u}^{(r_n)} \bigr),
  \qquad y \in C,
\]
where $\bar{u}^{(r_n)} = \dashint_{C(x_0,r_n)} u \dd x$. Integration by parts yields
\[
  Dv_n = \frac{Du(x_0 + r_n \frarg)}{\abs{Du}(C(x_0,r_n))} = c_n T^{(x_0,r_n)}_* Du = \gamma_n.
\]
By the Poincar\'{e} inequality, the sequence $(v_n)$ is uniformly bounded in $\BV(C;\R^m)$ and hence $v_n \toweakstar v$ in $\BV(C;\R^m)$ with $Dv = \tau$ (even without selecting a subsequence since the limit is unique). Moreover, from~\eqref{eq:Tan_density} we get that $Dv = P_0 \abs{Dv}$. Hence, we are in the situation of Lemma~\ref{lem:BV_rigidity} and the conclusion follows from this.
\end{proof}

\begin{remark}[Comparison to Alberti's Rank-One Theorem]
The preceding lemma can be seen as a weaker version of Alberti's Rank-One Theorem~\cite{Albe93ROPD}, which asserts that
\[
  P_0 := \frac{\di D^s u}{\di \abs{D^s u}}(x_0) \in \setb{ a \otimes \xi }{ a \in \R^m, \; \xi \in \Sbb^{d-1} }
\]
for $\abs{D^s u}$-almost every $x_0 \in \R^d$. From the Local Structure Lemma we get that every tangent measure $\tau \in \Tan_C(Du,x_0)$ at almost every point $x_0 \in \supp Du$ is the derivative of a $\BV$-function $v \in \BV(C;\R^m)$, which either has the form $v(x) = v_0 + \alpha P_0 x$ or $v(x) = v_0 + \psi(x \cdot \xi)a$ with $a \in \R^m$, $\xi \in \Sbb^{d-1}$, $\psi \in \BV(\R)$, and $v_0 \in \R^m$. In the latter case also $P_0 = a \otimes \xi$, and only at these points we can assert that the conclusion of Alberti's Theorem holds. But Preiss's Example 5.9(1) from~\cite{Prei87GMDR} shows that the first case may even occur almost everywhere, so the result is potentially much weaker than Alberti's. Nevertheless, our lemma still asserts that locally at singular points, $\tau = Dv$ is always one-directional, i.e.\ translation-invariant in all but at most one direction (which usually is proved as a corollary to Alberti's Theorem) and this will suffice later on. On a related note, Preiss's example alluded to above is also the reason why Alberti's Theorem cannot be proved by a blow-up argument, see Section~3 of~\cite{DeLe09NARO} for further explanation.
\end{remark}

\section{Tangent Young measures and localization}   \label{sc:localization}

\subsection{Localization at regular points} \label{ssc:localization}

We first investigate blow-ups of gradient Young measures at regular points.

\begin{proposition}[Localization at regular points] \label{prop:localize_reg}
Let $\nu \in \GYbf(\Omega;\R^{m \times d})$ be a gradient Young measure. Then, for $\Lcal^d$-almost every $x_0 \in \Omega$ there exists a \term{regular tangent Young measure} $\sigma \in \GYbf(\Bbb^d;\R^{m \times d})$ to $\nu$ at $x_0$, that is
\begin{equation} \label{eq:loc_reg_sigma}
  \sigma_y = \nu_{x_0} \quad\text{a.e.,} \qquad
  \lambda_\sigma = \frac{\di \lambda_\nu}{\di \Lcal^d}(x_0) \, \Lcal^d \restrict \Bbb^d, \qquad
  \sigma_y^\infty = \nu_{x_0}^\infty \quad\text{a.e.,}
\end{equation}
in particular
\begin{align}
  [\sigma](\Bbb^d) &= \biggl[ \dprb{\id,\nu_{x_0}} + \dprb{\id,\nu_{x_0}^\infty}
    \frac{\di \lambda_\nu}{\di \Lcal^d}(x_0) \biggr] \abs{\Bbb^d},
    \label{eq:loc_reg_barycenter} \\
  \ddprb{\ONE \otimes h, \sigma} &= \biggl[ \dprb{h,\nu_{x_0}} + \dprb{h^\infty,\nu_{x_0}^\infty}
    \frac{\di \lambda_\nu}{\di \Lcal^d}(x_0) \biggr] \abs{\Bbb^d} \label{eq:loc_reg_ddpr}
\end{align}
for all $\ONE \otimes h \in \Ebf(\Bbb^d;\R^{m \times d})$.
\end{proposition}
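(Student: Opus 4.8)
The plan is to realise $\sigma$ as a rescaled limit of $\nu$, imitating at the level of Young measures the classical blow-up of $Du$ at a regular point. Fix a generating sequence $(u_j) \subset \BV(\Omega;\R^m)$ with $Du_j \toY \nu$; for $x_0 \in \Omega$ and small $r > 0$ with $B(x_0,r) \subset \Omega$, set $w_j^{x_0,r}(y) := r^{-1}\bigl( u_j(x_0+ry) - \dashint_{B(x_0,r)} u_j \dd x \bigr)$ for $y \in \Bbb^d$, so that $Dw_j^{x_0,r} = r^{-d} T_*^{(x_0,r)} Du_j$, and define the \emph{rescaled Young measure} $\nu^{x_0,r} \in \Ybf(\Bbb^d;\R^{m\times d})$ by $(\nu^{x_0,r})_y := \nu_{x_0+ry}$, $(\nu^{x_0,r})_y^\infty := \nu_{x_0+ry}^\infty$ and $\lambda_{\nu^{x_0,r}} := r^{-d} T_*^{(x_0,r)}(\lambda_\nu \restrict \cl{B(x_0,r)})$. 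A change of variables gives, for $f \in \Ebf(\Bbb^d;\R^{m\times d})$, the identity $\ddpr{f,\nu^{x_0,r}} = r^{-d}\ddpr{\ONE_{B(x_0,r)}\, g,\nu}$, where $g(x,A) := \tilde f\bigl( \tfrac{x-x_0}{r}, A \bigr) \in \Ebf(\Omega;\R^{m\times d})$ and $\tilde f$ is a fixed $\Ebf$-extension of $f$ (precompose the spatial slot with a Lipschitz retraction onto $\cl{\Bbb^d}$); the same identity holds with $(\nu,\nu^{x_0,r})$ replaced by $(\epsilon_{Du_j},\epsilon_{Dw_j^{x_0,r}})$, because $\epsilon_{Dw_j^{x_0,r}}$ is exactly this rescaling of $\epsilon_{Du_j}$. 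Hence, applying Proposition~\ref{prop:ext_repr}(ii) with the Borel set $U := B(x_0,r)$ — legitimate for all but countably many $r$, namely those with $\lambda_\nu(\partial B(x_0,r)) = 0$ — we obtain $\epsilon_{Dw_j^{x_0,r}} \toweakstar \nu^{x_0,r}$ as $j \to \infty$, so that $\nu^{x_0,r} \in \GYbf(\Bbb^d;\R^{m\times d})$ for every such $r$.

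Next I would prove $\nu^{x_0,r} \toweakstar \sigma$ as $r \todown 0$ for $\Lcal^d$-a.e.\ $x_0$, with $\sigma$ the Young measure in~\eqref{eq:loc_reg_sigma}. By Lemma~\ref{lem:E_separable} (applied with $\Bbb^d$ in place of $\Omega$) it suffices to test against the countably many $\phi_k \otimes h_k$ exhibited there, all $h_k$ Lipschitz, so with $h_k^\infty = h_k^\#$ continuous. For each of them the change of variables above reads
\begin{align*}
  \ddprb{\phi_k \otimes h_k,\, \nu^{x_0,r}}
  &= \frac{1}{r^d}\int_{B(x_0,r)} \phi_k\Bigl( \tfrac{x-x_0}{r} \Bigr) \dprb{h_k,\nu_x} \dd x \\
  &\qquad + \frac{1}{r^d}\int_{\cl{B(x_0,r)}} \phi_k\Bigl( \tfrac{x-x_0}{r} \Bigr) \dprb{h_k^\infty,\nu_x^\infty} \dd\lambda_\nu(x).
\end{align*}
The first summand is a rescaled mollified average of $x \mapsto \dpr{h_k,\nu_x} \in \Lrm^1(\Omega)$ (using (vi) and linear growth), hence converges to $\abs{\Bbb^d}\, \dpr{h_k,\nu_{x_0}} \dashint_{\Bbb^d} \phi_k \dd y$ at every Lebesgue point of that function. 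In the second summand I split the finite signed measure $\dpr{h_k^\infty,\nu_x^\infty}\, \lambda_\nu$ into its $\Lcal^d$-absolutely continuous part, with density $x \mapsto \dpr{h_k^\infty,\nu_x^\infty}\, \frac{\di\lambda_\nu}{\di\Lcal^d}(x) \in \Lrm^1(\Omega)$, and an $\Lcal^d$-singular remainder dominated by $\norm{h_k^\infty}_\infty\, \lambda_\nu^s$; the former contributes, at a.e.\ $x_0$, the term $\abs{\Bbb^d}\, \dpr{h_k^\infty,\nu_{x_0}^\infty}\, \frac{\di\lambda_\nu}{\di\Lcal^d}(x_0) \dashint_{\Bbb^d} \phi_k \dd y$, and the latter contributes $0$ since $\lambda_\nu^s(B(x_0,r))/r^d \to 0$ for $\Lcal^d$-a.e.\ $x_0$ by Besicovitch differentiation (a doubling estimate passing from open to closed balls). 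Intersecting these countably many full-measure sets of good $x_0$ gives the claimed convergence; running the same Lebesgue-point and Besicovitch arguments for $\dpr{\abs{\frarg},\nu_x}$ and $\lambda_\nu$ also shows $\ddpr{\ONE\otimes\abs{\frarg},\nu^{x_0,r}}$ stays bounded as $r\todown0$, so by Theorem~\ref{thm:YM_compactness} together with the countable determining family of Lemma~\ref{lem:E_separable} we in fact have $\nu^{x_0,r} \toweakstar \sigma$ in $\Ybf(\Bbb^d;\R^{m\times d})$. (Where $\frac{\di\lambda_\nu}{\di\Lcal^d}(x_0) = 0$ the only-$\lambda_\nu$-a.e.\ defined $\nu_{x_0}^\infty$ is multiplied by $0$; otherwise $x_0$ lies in a set on which $\Lcal^d \ll \lambda_\nu$, so $\nu_{x_0}^\infty$ is well defined.)

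It remains to verify $\sigma \in \GYbf(\Bbb^d;\R^{m\times d})$, which I would do by a diagonal argument. Choose $r_n \todown 0$ with $\lambda_\nu(\partial B(x_0,r_n)) = 0$, so $\epsilon_{Dw_j^{x_0,r_n}} \toweakstar \nu^{x_0,r_n}$ as $j \to \infty$. Since $\ONE\otimes\abs{\frarg} \in \Ebf(\Bbb^d;\R^{m\times d})$, this already gives $\abs{Dw_j^{x_0,r_n}}(\Bbb^d) = \ddpr{\ONE\otimes\abs{\frarg},\epsilon_{Dw_j^{x_0,r_n}}} \to \ddpr{\ONE\otimes\abs{\frarg},\nu^{x_0,r_n}}$ as $j \to \infty$, and the right-hand side tends to $\ddpr{\ONE\otimes\abs{\frarg},\sigma} < \infty$ as $n \to \infty$ by the convergence just established. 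Hence one can pick indices $j_n$ with $\abs{Dw_{j_n}^{x_0,r_n}}(\Bbb^d)$ bounded uniformly in $n$ and, using metrisability of weak* convergence on mass-bounded subsets of $\Ybf(\Bbb^d;\R^{m\times d})$ (Lemma~\ref{lem:E_separable}), with $\epsilon_{Dw_{j_n}^{x_0,r_n}}$ within $1/n$ of $\nu^{x_0,r_n}$; then $Dw_{j_n}^{x_0,r_n} \toY \sigma$ with $(w_{j_n}^{x_0,r_n}) \subset \BV(\Bbb^d;\R^m)$, so $\sigma \in \GYbf(\Bbb^d;\R^{m\times d})$. Finally,~\eqref{eq:loc_reg_barycenter} and~\eqref{eq:loc_reg_ddpr} follow by inserting the fibres and mass measure of $\sigma$ from~\eqref{eq:loc_reg_sigma} into the definitions of the barycenter and of $\ddpr{\frarg,\frarg}$.

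The step I expect to be most delicate is making the rescaling of Young measures rigorous: the localised integrand $\ONE_{B(x_0,r)}\, g$ is discontinuous in $x$, so it does not belong to $\Ebf(\Omega;\R^{m\times d})$ and everything must be routed through Proposition~\ref{prop:ext_repr}(ii); this forces one to discard the countable set of radii with $\lambda_\nu(\partial B(x_0,r)) > 0$ and to check that this is harmless both for the limit $r \todown 0$ (it is, since the underlying Lebesgue and Besicovitch limits are taken along \emph{all} $r \todown 0$) and for the diagonal extraction. The remaining ingredients — reduction to the countable family via Lemma~\ref{lem:E_separable}, Lebesgue points of $\Lrm^1$-densities, Besicovitch differentiation of the $\Lcal^d$-singular part, and metrisability of weak* convergence — are routine once this reduction is in place.
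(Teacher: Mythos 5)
Your proof is correct and follows essentially the same route as the paper's: rescale the generating sequence around $x_0$, convert the Young measure pairings via a change of variables, identify the limit as $r \todown 0$ on the countable determining family of Lemma~\ref{lem:E_separable} using Lebesgue points and the vanishing of $\lambda_\nu^s(B(x_0,r))/r^d$, and conclude with the Compactness Theorem and a diagonal extraction. The only organizational difference is that you explicitly identify the intermediate rescaled Young measure $\nu^{x_0,r}$ and obtain full-sequence convergence $\epsilon_{Dw_j^{x_0,r}} \toweakstar \nu^{x_0,r}$ via Proposition~\ref{prop:ext_repr}(ii) (at the cost of discarding the countably many radii with $\lambda_\nu(\partial B(x_0,r)) > 0$), whereas the paper simply extracts a subsequential Young measure limit $\sigma^{(r)}$ at each fixed $r$ and computes its action on test integrands by the same change of variables.
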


\begin{proof}
Let $\{f_k\} = \setn{ \phi_k \otimes h_k \in \Crm(\cl{\Omega}) \times \Crm(\R^{m \times d}) }{ k \in \N } \subset \Ebf(\Omega;\R^{m \times d})$ be as in Lemma~\ref{lem:E_separable}. Further, assume that $x_0 \in \Omega$ is such that
\begin{equation} \label{eq:x0_reg_point}
  \lim_{r \todown 0} \frac{\lambda_\nu^s(B(x_0,r))}{r^d} = 0
\end{equation}
and $x_0$ is a Lebesgue point of the functions
\[
  x \mapsto \dprb{h_k,\nu_x} + \dprb{h_k^\infty,\nu_x^\infty}
  \frac{\di \lambda_\nu}{\di \Lcal^d}(x)
  \qquad\text{for all $k \in \N$.}
\]
By standard results in measure theory, $\Lcal^d$-almost every $x_0 \in \Omega$ satisfies the above hypotheses. 

Take $\nabla u_j \toY \nu$, where $(u_j) \subset \Wrm^{1,1}(\Omega;\R^d)$ (cf.\ Section~\ref{ssc:GYM} for the possibility of finding such a sequence). For $r > 0$ such that $B(x_0,r) \subset \Omega$ introduce $v_j^{(r)} \in \BV(\Bbb^d;\R^m)$,
\[
  v_j^{(r)}(y) := \frac{u_j(x_0 + ry) - \bar{u}_j^{(r)}}{r},  \qquad y \in \Bbb^d,
\]
where $\bar{u}_j^{(r)} = \dashint_{B(x_0,r)} u_j \dd x$. We get $\nabla v_j^{(r)}(y) = \nabla u_j(x_0 + ry)$ and the Poincar\'{e} inequality yields a uniform norm-bound on the sequence $(v_j^{(r)})_j$ in $\Wrm^{1,1}(\Bbb^d;\R^m)$ for fixed $r > 0$. Hence, for every $r > 0$ as above we may pick a subsequence of $j$s (depending on $r$, which is implicit in the following) such that $Dv_j^{(r)} \toY \sigma^{(r)} \in \Ybf(\Bbb^d;\R^{m \times d})$.

Let $\phi \otimes h \in \Ebf(\Bbb^d;\R^{m \times d})$ and use a change of variables to see
\begin{align*}
  \ddprb{\phi \otimes h,\sigma^{(r)}} &= \lim_{j\to\infty} \int_{\Bbb^d} \phi(y) h \bigl(
    \nabla v_j^{(r)}(y) \bigr) \dd y \\
  &= \lim_{j\to\infty} \int_{\Bbb^d} \phi(y) h \bigl(\nabla u_j(x_0 + r y) \bigr) \dd y \\
  &= \lim_{j\to\infty} \frac{1}{r^d} \int_{B(x_0,r)} \phi \Bigl( \frac{x-x_0}{r} \Bigr)
    h \bigl(\nabla u_j(x) \bigr) \dd x \\
  &= \frac{1}{r^d} \, \ddprB{\phi\Bigl(\frac{\frarg-x_0}{r}\Bigr) \otimes h,\nu} .
\end{align*}
We also have
\begin{align*}
  &\frac{1}{r^d} \int_{B(x_0,r)} \phi\Bigl(\frac{x-x_0}{r}\Bigr) \biggl[ \dprb{h,\nu_x}
    + \dprb{h^\infty,\nu_x^\infty} \frac{\di \lambda_\nu}{\di \Lcal^d}(x) \biggr] \dd x \\
  &\qquad = \int_{\Bbb^d} \phi(y) \biggl[ \dprb{h,\nu_{x_0+r y}}
    + \dprb{h^\infty,\nu_{x_0+r y}^\infty} \frac{\di \lambda_\nu}{\di \Lcal^d}(x_0 + r y)
    \biggr] \dd y,
\end{align*}
which by assumption on $x_0$ converges to
\[
  \int_{\Bbb^d} \phi(y) \biggl[ \dprb{h,\nu_{x_0}} + \dprb{h^\infty,\nu_{x_0}^\infty}
    \frac{\di \lambda_\nu}{\di \Lcal^d}(x_0) \biggr] \dd y
\]
as $r \todown 0$. To see this, first consider the collection $\{ f_k \} = \{ \phi_k \otimes h_k \}$ from above, employ the Lebesgue point properties of $x_0$, and then use Lemma~\ref{lem:E_separable} to get the assertion for all $\phi \otimes h \in \Ebf(\Bbb^d;\R^{m \times d})$. For the singular part, we have
\begin{align*}
  &\absBB{ \frac{1}{r^d} \int_{B(x_0,r)} \phi\Bigl(\frac{x-x_0}{r}\Bigr) \dprb{h^\infty,\nu_x^\infty}
    \dd \lambda_\nu^s(x)} \\
  &\qquad \leq \frac{\lambda_\nu^s(B(x_0,r))}{r^d} \norm{\phi}_\infty \cdot
    \sup_{A \in \partial \Bbb^{m \times d}} \, \absb{h^\infty(A)}
    \quad\to\quad 0 \qquad\text{as $r \todown 0$}
\end{align*}
by~\eqref{eq:x0_reg_point}. In particular, we have proved
\[
  \limsup_{r \todown 0} \, \absb{\ddprb{\phi \otimes h,\sigma^{(r)}}} < \infty,
\]
and so by virtue of Theorem~\ref{thm:YM_compactness} we may choose a sequence $r_n \todown 0$ such that $\sigma^{(r_n)} \toweakstar \sigma$ in $\Ybf(\Bbb^d;\R^{m \times d})$. A diagonal argument yields $\sigma \in \GYbf(\Bbb^d;\R^{m \times d})$.

Hence,
\begin{align*}
  \ddprb{\phi \otimes h,\sigma} &= \lim_{n\to\infty} \ddprb{\phi \otimes h,\sigma^{(r_n)}} \\
  &= \int_{\Bbb^d} \phi(y) \biggl[ \dprb{h,\nu_{x_0}} + \dprb{h^\infty,\nu_{x_0}^\infty}
    \frac{\di \lambda_\nu}{\di \Lcal^d}(x_0) \biggr] \dd y.
\end{align*}
Varying $\phi$ and then $h$, we get~\eqref{eq:loc_reg_sigma}, from which the assertions~\eqref{eq:loc_reg_barycenter},~\eqref{eq:loc_reg_ddpr} follow immediately.
\end{proof}

\subsection{Localization at singular points}

We now consider singular points.

\begin{proposition}[Localization at singular points] \label{prop:localize_sing}
Let $\nu \in \GYbf(\Omega;\R^{m \times d})$ be a gradient Young measure. Then, there exists a set $S \subset \Omega$ with $\lambda_\nu^s(\Omega \setminus S) = 0$ such that for all $x_0 \in S$ and all open cubes $Q$ with center $0$ and $\abs{Q} = 1$, there exists a \term{singular tangent Young measure} $\sigma \in \GYbf(Q;\R^{m \times d})$ to $\nu$ at $x_0$, that is
\begin{align}
  [\sigma] &\in \absn{\dprn{\id,\nu_{x_0}^\infty}} \cdot \Tan_Q([\nu],x_0),  &\sigma_y &= \delta_0 \quad\text{a.e.,} \label{eq:loc_sing_1} \\
  \lambda_\sigma &\in \Tan_Q(\lambda_\nu^s,x_0), \quad \lambda_\sigma(Q) = 1,  &\sigma_y^\infty &= \nu_{x_0}^\infty
    \quad\text{$\lambda_\sigma$-a.e.} \label{eq:loc_sing_2}
\end{align}
In particular, for all bounded open sets $U \subset Q$ with $(\Lcal^d + \lambda_\sigma)(\partial U) = 0$ and all positively $1$-homogeneous $g \in \Crm(\R^{m \times d})$ it holds that
\begin{equation} \label{eq:loc_sing_ddpr}
  \ddprb{\ONE_U \otimes g, \sigma} = \dprb{g,\nu_{x_0}^\infty} \, \lambda_\sigma(U).
\end{equation}
\end{proposition}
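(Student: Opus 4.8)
The plan is to run the argument of Proposition~\ref{prop:localize_reg}, but with the blow-up taken \emph{at the scale of $\lambda_\nu^s$}: this choice of scale is exactly what simultaneously produces $\lambda_\sigma(Q)=1$ and kills the regular part of the rescaled Young measures in the limit. Fix a countable set $\{f_k=\phi_k\otimes h_k\}\subset\Ebf(\Omega;\R^{m \times d})$ as in Lemma~\ref{lem:E_separable}, enlarged so that the $h_k|_{\partial\Bbb^{m \times d}}$ are dense in $\Crm(\partial\Bbb^{m \times d})$ and $\{h_k\}$ contains the coordinate functions $A\mapsto A^i_j$. Let $S$ consist of those $x_0\in\supp\lambda_\nu^s$ such that: (a) the densities at $x_0$ with respect to $\lambda_\nu^s$ of $\Lcal^d$, of $\lambda_\nu^a\,\Lcal^d$ and of the measure $\dprn{\abs{\frarg},\nu_x}\,\Lcal^d$ all vanish; (b) $x_0$ is a $\lambda_\nu^s$-Lebesgue point of every $x\mapsto\dprn{h_k,\nu_x^\infty}$ (with the evident limiting value) and $\frac{\di[\nu]}{\di\abs{[\nu]}}(x_0)$ exists (wherever $\dprn{\id,\nu_{x_0}^\infty}\neq 0$); (c) the conclusions of the Strict Blow-up Lemma~\ref{lem:strict_blowup} hold at $x_0$ for the positive measure $\lambda_\nu^s$. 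By mutual singularity of $\lambda_\nu^s$ with $\Lcal^d$ and the Besicovitch differentiation theorem, each condition holds off a $\lambda_\nu^s$-null set, so $\lambda_\nu^s(\Omega\setminus S)=0$. Now fix $x_0\in S$ and an open unit cube $Q$ centred at $0$. By (c), choose $r_n\todown 0$ with, writing $d_n:=\lambda_\nu^s(Q(x_0,r_n))^{-1}$, the measures $\kappa_n:=d_n T^{(x_0,r_n)}_*\lambda_\nu^s$ converging strictly in $\Mbf(\cl{Q})$ to some $\kappa\in\Tan_Q(\lambda_\nu^s,x_0)$ with $\kappa(Q)=1$ and $\kappa(\partial Q)=\kappa_n(\partial Q)=0$; also require $(\Lcal^d+\lambda_\nu)(\partial Q(x_0,r_n))=0$ (discarding at most countably many radii). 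Choose $(u_j)\subset(\Wrm^{1,1}\cap\Crm^\infty)(\Omega;\R^m)$ with $\nabla u_j\toY\nu$ and set, as in Proposition~\ref{prop:localize_reg},
\[
  v_j^{(n)}(y):=\frac{r_n^{d-1}}{\lambda_\nu^s(Q(x_0,r_n))}\Bigl(u_j(x_0+r_ny)-\dashint_{Q(x_0,r_n)}u_j\dd x\Bigr),\qquad y\in Q,
\]
so $\nabla v_j^{(n)}(y)=s_n\nabla u_j(x_0+r_ny)$ with $s_n:=d_n r_n^d=r_n^d/\lambda_\nu^s(Q(x_0,r_n))\to 0$ by (a).

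The heart of the proof is the computation, via a change of variables together with Proposition~\ref{prop:ext_repr}(ii) on the continuity set $Q(x_0,r_n)$ and the fact that $h(s_n\frarg)\in\Ebf(\R^{m \times d})$ with recession function $s_n h^\infty$, that the $\nabla v_j^{(n)}$ generate (as $j\to\infty$) a Young measure $\sigma^{(n)}\in\GYbf(Q;\R^{m \times d})$ satisfying
\[
  \ddprb{\phi\otimes h,\sigma^{(n)}}=\int_Q\phi(y)\,\dprb{h(s_n\frarg),\nu_{x_0+r_ny}}\dd y+\int_Q\phi(y)\,\dprb{h^\infty,\nu_{x_0+r_ny}^\infty}\dd\bigl(d_n T^{(x_0,r_n)}_*\lambda_\nu\bigr)(y)
\]
for all $\phi\otimes h\in\Ebf(Q;\R^{m \times d})$. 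Applying this with $h=\abs{\frarg}\wedge K$ and letting $K\to\infty$ gives $\dprn{\abs{\frarg},\sigma^{(n)}_y}=s_n\dprn{\abs{\frarg},\nu_{x_0+r_ny}}$ a.e., whence by (a)
\[
  \int_Q\dprb{\abs{\frarg},\sigma^{(n)}_y}\dd y=d_n\!\!\int_{Q(x_0,r_n)}\!\!\dprb{\abs{\frarg},\nu_x}\dd x\longrightarrow 0,
\]
and applying it with $h=\abs{\frarg}$, $\phi\equiv 1$ gives $\lambda_{\sigma^{(n)}}(\cl{Q})=d_n\lambda_\nu(Q(x_0,r_n))\to 1$; in particular $\sup_n\ddpr{\ONE\otimes\abs{\frarg},\sigma^{(n)}}<\infty$. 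By Theorem~\ref{thm:YM_compactness}, after passing to a subsequence $\sigma^{(n)}\toweakstar\sigma$, and a standard diagonalisation over the generating sequences puts $\sigma\in\GYbf(Q;\R^{m \times d})$.

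To identify $\sigma$: since $\zeta\mapsto\int_Q\dprn{\abs{\frarg},\zeta_y}\dd y$ is sequentially weakly* lower semicontinuous on Young measures (test with $\ONE\otimes(\abs{\frarg}\wedge K)$, whose recession function vanishes, and let $K\to\infty$), the relation above forces $\sigma_y=\delta_0$ for a.e.\ $y$. For $\phi\in\Crm(\cl{Q})$ and positively $1$-homogeneous $g\in\Crm(\R^{m \times d})$, the displayed identity reads $\ddprb{\phi\otimes g,\sigma^{(n)}}=s_n\int_Q\phi\,\dprn{g,\nu_{x_0+r_ny}}\dd y+\int_Q\phi\,\dprn{g,\nu_{x_0+r_ny}^\infty}\dd(d_n T^{(x_0,r_n)}_*\lambda_\nu)$; the first term is $O\bigl(d_n\!\int_{Q(x_0,r_n)}\dprn{\abs{\frarg},\nu_x}\dd x\bigr)\to 0$, the $\lambda_\nu^a$-part of the second is $O(d_n\lambda_\nu^a(Q(x_0,r_n)))\to 0$, and the $\lambda_\nu^s$-part equals $\int_Q\phi\,\dprn{g,\nu_{x_0+r_ny}^\infty}\dd\kappa_n$, which converges to $\dprn{g,\nu_{x_0}^\infty}\int_Q\phi\dd\kappa$ because $x_0$ is a $\lambda_\nu^s$-Lebesgue point of $x\mapsto\dprn{g,\nu_x^\infty}$ (first for $g\in\{h_k\}$, then by density on $\partial\Bbb^{m \times d}$) and $\kappa_n\toweakstar\kappa$. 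Letting $n\to\infty$ on the left (using $\sigma^{(n)}\toweakstar\sigma$) and inserting $\sigma_y=\delta_0$ gives $\int_{\cl{Q}}\phi\,\dprn{g,\sigma_y^\infty}\dd\lambda_\sigma=\dprn{g,\nu_{x_0}^\infty}\int_Q\phi\dd\kappa$ for all such $\phi,g$; taking $g=\abs{\frarg}$ identifies $\lambda_\sigma\restrict Q$ with $\kappa$, and then $\lambda_\sigma(\cl{Q})=\lim_n\lambda_{\sigma^{(n)}}(\cl{Q})=1=\kappa(Q)$ yields $\lambda_\sigma=\kappa\in\Tan_Q(\lambda_\nu^s,x_0)$ with $\lambda_\sigma(Q)=1$, while varying $g$ over $\{h_k\}$ gives $\sigma_y^\infty=\nu_{x_0}^\infty$ for $\lambda_\sigma$-a.e.\ $y$. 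Hence $[\sigma]=\dprn{\id,\nu_{x_0}^\infty}\lambda_\sigma\in\dprn{\id,\nu_{x_0}^\infty}\cdot\Tan_Q(\lambda_\nu^s,x_0)$, which equals $\abs{\dprn{\id,\nu_{x_0}^\infty}}\cdot\Tan_Q([\nu],x_0)$ by a short measure-theoretic computation (using~\eqref{eq:Tan_density}, the existence of $\frac{\di[\nu]}{\di\abs{[\nu]}}(x_0)$ and approximate continuity of $x\mapsto\abs{\dprn{\id,\nu_x^\infty}}$ at $x_0$ when $\dprn{\id,\nu_{x_0}^\infty}\neq 0$; both sides are $\{0\}$ otherwise) --- equivalently, by checking directly that $\abs{[\nu]}(Q(x_0,r_n))^{-1}\,T^{(x_0,r_n)}_*[\nu]\toweakstar\abs{\dprn{\id,\nu_{x_0}^\infty}}^{-1}[\sigma]$. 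This establishes \eqref{eq:loc_sing_1}--\eqref{eq:loc_sing_2}; finally \eqref{eq:loc_sing_ddpr} follows by substituting $\sigma_y=\delta_0$ and $\sigma_y^\infty=\nu_{x_0}^\infty$ into $\ddpr{\ONE_U\otimes g,\sigma}$ and using $g(0)=0$ together with $(\Lcal^d+\lambda_\sigma)(\partial U)=0$.

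I expect the main obstacle to be the bookkeeping around the blow-up scale. One must recognise that rescaling by $\lambda_\nu^s$ (and not by $\abs{[\nu]}$ or by $\abs{Du_j}$) is precisely what makes the absolutely continuous part of the rescaled gradients vanish in the limit --- the naive bound $\abs{\nabla v_j^{(n)}}\wedge K\le\abs{\nabla v_j^{(n)}}$ is too lossy and does \emph{not} give $\sigma_y=\delta_0$, so the explicit form of $\sigma^{(n)}$ furnished by the displayed identity is essential --- and one must then keep the single sequence $(r_n)$ simultaneously compatible with the Strict Blow-up Lemma, with the continuity-set hypothesis of Proposition~\ref{prop:ext_repr}(ii), and with the $\lambda_\nu^s$-Lebesgue-point property of $x\mapsto\nu_x^\infty$. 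The remaining delicate points are the identification of $\Tan_Q([\nu],x_0)$ in \eqref{eq:loc_sing_1} and the degenerate case $\dprn{\id,\nu_{x_0}^\infty}=0$.
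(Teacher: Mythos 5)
Your proof is correct and follows essentially the same route as the paper's: blow up at the scale of $\lambda_\nu^s$ with radii supplied by the Strict Blow-up Lemma~\ref{lem:strict_blowup}, compute $\ddprn{\phi\otimes h,\sigma^{(n)}}$ by a change of variables and Proposition~\ref{prop:ext_repr}(ii), kill the regular part via the vanishing $\lambda_\nu^s$-density of $\Lcal^d$-absolutely continuous quantities, and identify $\lambda_\sigma$, $\sigma_y^\infty$ through the $\lambda_\nu^s$-Lebesgue-point properties and a countable dense family of test integrands. The only deviations are cosmetic: you normalize by $\lambda_\nu^s(Q(x_0,r_n))^{-1}$ rather than $\ddprn{\ONE_{Q(x_0,r_n)}\otimes\abs{\frarg},\nu}^{-1}$ (asymptotically equivalent under your condition (a)), and you obtain $\sigma_y=\delta_0$ via the truncations $\abs{\frarg}\wedge K$ and lower semicontinuity where the paper uses compactly supported cut-offs $\abs{\frarg}\chi(\frarg)$ --- both arguments are sound.
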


\begin{proof}
Denote by $Q(x_0,r)$ the cube $x_0 + rQ$ and let $S \subset \supp \lambda_\nu^s \subset \Omega$ be the set of all points $x_0 \in \Omega$ satisfying
\begin{equation} \label{eq:sing_point}
  \lim_{r \todown 0} \frac{1}{\lambda_\nu^s(Q(x_0,r))} \int_{Q(x_0,r)} 1 +
  \dprb{\abs{\frarg},\nu_x} + \frac{\di \lambda_\nu}{\di \Lcal^d}(x) \dd x = 0
\end{equation}
and such that $x_0$ is a $\lambda_\nu^s$-Lebesgue point of the functions
\[
  x \mapsto \dprb{\id,\nu_x^\infty}  \qquad\text{and}\qquad
  x \mapsto \dprb{g_k,\nu_x^\infty}, \quad k \in \N,
\]
where $\setn{g_k}{ k \in \N }$ is a countable dense family of functions in $\Crm(\partial \Bbb^{m \times d})$.
From standard results in measure theory, in particular the strong form of Besicovitch's Derivation Theorem (Theorem~1.153 in~\cite{FonLeo07MMCV}), both conditions hold for $\lambda_\nu^s$-almost every $x_0 \in \Omega$, hence $\lambda_\nu^s(\Omega \setminus S) = 0$, and $S$ can be chosen independently of the cube $Q$.

Moreover, possibly discarding another $\lambda_\nu^s$-negligible subset of $S$ (independent of the cube $Q$), at every fixed $x_0 \in S$ the Strict Blow-up Lemma~\ref{lem:strict_blowup} gives a sequence $r_n \todown 0$ such that
\begin{equation} \label{eq:lambda_blowup}
  \frac{T_*^{(x_0,r_n)} \lambda_\nu^s}{\lambda_\nu^s(Q(x_0,r_n))}
  \quad\to\quad \lambda \in \Tan_Q(\lambda_\nu^s,x_0)  \qquad\text{strictly}
\end{equation}
as well as
\[
  \lambda(Q) = 1  \qquad\text{and}\qquad
  \lambda_\nu(\partial Q(x_0,r_n)) = 0  \quad\text{for all $n \in \N$.}
\]
Let $\nabla u_j \toY \nu$, where $(u_j) \subset \Wrm^{1,1}(\Omega;\R^m)$. For all $n \in \N$ large enough so that $Q(x_0,r_n) \subset \Omega$, let the functions $v_j^{(r_n)} \in \BV(Q(x_0,r_n);\R^m)$ be defined by
\[
  v_j^{(r_n)}(y) := r_n^{d-1}c_n \bigl( u_j(x_0 + r_n y) - \bar{u}_j^{(r_n)} \bigr),  \qquad y \in Q,
\]
where $\bar{u}_j^{(r_n)} = \dashint_{Q(x_0,r_n)} u_j \dd x$ and
\[
  c_n := \frac{1}{\ddprn{\ONE_{Q(x_0,r_n)} \otimes \abs{\frarg},\nu}}.
\]
Notice that $c_n$ is well-defined, because $x_0 \in \supp \lambda_\nu$.

We have
\begin{align*}
  \nabla v_j^{(r_n)}(y) &= r_n^d c_n \nabla u_j(x_0 + r_n y), \qquad y \in Q, \\
  Dv_j^{(r_n)} &= c_n T_*^{(x_0,r_n)} Du_j.
\end{align*}
Moreover,
\[
  \lim_{j\to\infty} c_n \abs{Du_j}(Q(x_0,r_n)) = \lim_{j\to\infty} \frac{\abs{Du_j}(Q(x_0,r_n))}{\ddprn{\ONE_{Q(x_0,r_n)} \otimes \abs{\frarg},\nu}} = 1,
\]
because $\lambda_\nu(\partial Q(x_0,r_n)) = 0$ in conjunction with part~(ii) of Proposition~\ref{prop:ext_repr}. Hence, also using the Poincar\'{e} inequality in BV, the sequence $(v_j^{(r_n)})_j$ is uniformly bounded in the space $\BV(Q;\R^m)$ for every fixed $r_n$ as above. Select a subsequence of the $j$s (depending on $n$, not relabeled) with $Dv_j^{(r_n)} \toY \sigma^{(r_n)} \in \Ybf(Q;\R^{m \times d})$.

By construction, the barycenters $[\sigma^{(r_n)}]$ satisfy
\begin{equation} \label{eq:sigma_rn_barycenter}
  [\sigma^{(r_n)}] = \wslim_{j\to\infty} c_n T_*^{(x_0,r_n)} Du_j = c_n T_*^{(x_0,r_n)} [\nu],
\end{equation}
where the weak* limit is to be understood in $\Mbf(\cl{Q};\R^{m \times d})$.

For every positively $1$-homogeneous $g \in \Crm(\R^{m \times d})$, perform a change of variables to observe that for all $\phi \in \Crm(\cl{Q})$ and $n \in \N$,
\begin{align*}
  \ddprb{\phi \otimes g, \sigma^{(r_n)}} &= \lim_{j \to \infty} \int_Q \phi(y) g \bigl( \nabla v_j^{(r_n)}(y) \bigr) \dd y \\
  &= \lim_{j \to \infty} r_n^d c_n \int_Q \phi(y) g \bigl( \nabla u_j(x_0 + r_n y) \bigr) \dd y \\
  &= \lim_{j \to \infty} c_n \int_{Q(x_0,r_n)} \phi \Big( \frac{x-x_0}{r_n} \Big) g \bigl( \nabla u_j(x) \bigr) \dd x \\
  &= c_n \ddprB{\phi\Bigl(\frac{\frarg-x_0}{r_n}\Bigr) \otimes g,\nu},
\end{align*}
the last equality here follows since $\lambda_\nu(\partial Q(x_0,r_n)) = 0$ as above by part~(ii) of Proposition~\ref{prop:ext_repr}.

In particular,
\[
  \ddprb{\ONE_{\cl{Q}} \otimes \abs{\frarg}, \sigma^{(r_n)}} = 1  \qquad\text{for all $n \in \N$.}
\]
Therefore, up to a subsequence of $n$ (not relabeled), we can assume $\sigma^{(r_n)} \toweakstar \sigma \in \Ybf(Q;\R^{m \times d})$ and a diagonal argument yields $\sigma \in \GYbf(Q;\R^{m \times d})$.

Setting $M := \sup_{A \in \partial \Bbb^{m \times d}} \, \abs{g(A)}$, we can estimate the regular part of $\ddprn{\phi \otimes g, \sigma^{(r_n)}}$ as follows:
\begin{align}
  &\absBBB{ \frac{1}{\ddprn{\ONE_{Q(x_0,r_n)} \otimes \abs{\frarg},\nu}}
    \int_{Q(x_0,r_n)}\phi \Bigl( \frac{x-x_0}{r_n} \Bigr)
    \biggl[ \dprb{g,\nu_x} + \dprb{g,\nu_x^\infty}
    \frac{\di \lambda_\nu}{\di \Lcal^d}(x) \biggr] \dd x } \notag \\
  &\qquad \leq \frac{\norm{\phi}_\infty M}{\lambda_\nu^s(Q(x_0,r_n))}
    \int_{Q(x_0,r)} \dprb{\abs{\frarg},\nu_x} + \frac{\di \lambda_\nu}
    {\di \Lcal^d}(x) \dd x, \label{eq:reg_part_vanish}
\end{align}
and this goes to zero as $n \to \infty$ by means of~\eqref{eq:sing_point}. Thus, also noticing
\[
  \lim_{r \todown 0} c_n \lambda_\nu^s(Q(x_0,r)) = \lim_{r \todown 0} \frac{\lambda_\nu^s(Q(x_0,r))}{\ddprn{\ONE_{Q(x_0,r)}
  \otimes \abs{\frarg},\nu}} = 1
\]
by the assumptions on $x_0 \in S$, it follows that
\begin{equation} \label{eq:sigma_limit}
\begin{aligned}
  &\ddprb{\phi \otimes g, \sigma} = \lim_{n\to\infty}
    \ddprb{\phi \otimes g, \sigma^{(r_n)}} \\
  &\qquad = \lim_{n\to\infty} \frac{1}{\lambda_\nu^s(Q(x_0,r_n))}
    \int_{Q(x_0,r_n)} \phi \Big( \frac{x-x_0}{r_n} \Big)
    \dprb{g,\nu_x^\infty} \dd \lambda_\nu^s(x).
\end{aligned}
\end{equation}

In~\eqref{eq:loc_sing_1}, the first assertion follows from~\eqref{eq:sigma_rn_barycenter} together with
\[
  \lim_{n\to\infty} c_n \abs{[\nu]}(Q(x_0,r_n)) = \absb{\dprb{\id,\nu_{x_0}^\infty}}
\]
by the assumed properties of $x_0$. 

For the second assertion in~\eqref{eq:loc_sing_1}, consider cut-off functions $\phi \in \Crm_c(Q;[0,1])$, $\chi \in \Crm_c(\R^{m \times d};[0,1])$, and derive similarly to above
\[
  \ddprb{\phi \otimes \abs{\frarg}\chi(\frarg), \sigma^{(n)}} = c_n \ddprB{\phi\Bigl(\frac{\frarg-x_0}{r_n}\Bigr) \otimes \abs{\frarg}\chi(r_n^d c_n \frarg),\nu}.
\]
Since $\chi$ has compact support, the singular part of the last expression is zero, whereas for the regular part we can use a reasoning analogous to~\eqref{eq:reg_part_vanish} to conclude
\[
  \ddprb{\phi \otimes \abs{\frarg}\chi(\frarg), \sigma} = 0,
\]
and we infer $\sigma_x = \delta_0$ almost everywhere (with respect to $\Lcal^d$).

If we use $g = \abs{\frarg}$ in~\eqref{eq:sigma_limit}, we arrive at
\begin{align*}
  \int_Q \phi \dd \lambda_\sigma &= \lim_{n\to\infty} \frac{1}{\lambda_\nu^s(Q(x_0,r_n))}
    \int_{Q(x_0,r_n)} \phi \Big( \frac{x-x_0}{r_n} \Big) \dd \lambda_\nu^s(x) \\
  &= \lim_{n\to\infty} \int_Q \phi \dd \biggl( \frac{T_*^{(x_0,r_n)}\lambda_\nu^s}
    {\lambda_\nu^s(Q(x_0,r_n))} \biggr).
\end{align*}
In conjunction with~\eqref{eq:lambda_blowup}, this shows $\lambda_\sigma = \lambda \in \Tan_Q(\lambda_\nu^s,x_0)$, i.e.\ the first assertion in~\eqref{eq:loc_sing_2}, and the second statement follows immediately from the assumed properties of $\lambda$.

To show~\eqref{eq:loc_sing_ddpr}, let $U \subset Q$ be a bounded open set with $(\Lcal^d + \lambda_\sigma)(\partial U) = 0$ and take a positively $1$-homogeneous $g_k \in \Crm(\R^{m \times d})$ from the collection exhibited at the beginning of the proof. Then, using $\phi = \ONE_U$ in~\eqref{eq:sigma_limit} (once again using the extended representation result from Proposition~\ref{prop:ext_repr}), we get
\begin{align*}
  \int_U \dprb{g_k,\sigma_y^\infty} \dd \lambda_\sigma(y) &= \ddprb{\ONE_U \otimes g_k, \sigma} \\
  &= \lim_{n\to\infty} \frac{1}{\lambda_\nu^s(Q(x_0,r_n))} \int_{U(x_0,r_n)} \dprb{g_k,\nu_x^\infty} \dd \lambda_\nu^s(x) \\
  &= \lambda_\sigma(U) \lim_{n\to\infty} \dashint_{U(x_0,r_n)} \dprb{g_k,\nu_x^\infty} \dd \lambda_\nu^s(x) \\
  &= \lambda_\sigma(U) \dprb{g_k,\nu_{x_0}^\infty},
\end{align*}
where the third equality follows from the fact that
\[
  \lim_{n\to\infty} \frac{\lambda_\nu^s(U(x_0,r_n))}{\lambda_\nu^s(Q(x_0,r_n))} = \frac{\lambda_\sigma(U)}{\lambda_\sigma(Q)} = \lambda_\sigma(U),
\]
and the fourth equality is due to the Lebesgue point properties of $x_0$. Thus we have~\eqref{eq:loc_sing_ddpr} for $g = g_k$. By density, this assertion then also holds for all positively $1$-homogeneous $g \in \Crm(\R^{m \times d})$, and the third assertion in~\eqref{eq:loc_sing_2} follows immediately from that by varying $U$ and $g$. This finishes the proof.
\end{proof}

\section{Jensen-type inequalities and lower semicontinuity}   \label{sc:Jensen}

\subsection{Jensen-type inequalities}

This section establishes Jensen-type inequalities for gradient Young measures. We proceed separately for the regular and the singular part of the Young measure and employ in particular the localization principles of the previous section and the Rigidity Lemma~\ref{lem:BV_rigidity}.

The proof of the Jensen-type inequality at a regular point is rather straightforward:

\begin{proposition} \label{prop:Jensen_regular}
Let $\nu \in \GYbf(\Omega;\R^{m \times d})$ be a gradient Young measure. Then, for $\Lcal^d$-almost every $x_0 \in \Omega$ it holds that
\[
  h \biggl( \dprb{\id,\nu_{x_0}} + \dprb{\id,\nu_{x_0}^\infty} \frac{\di \lambda_\nu}{\di \Lcal^d}(x_0) \biggr)
  \leq \dprb{h,\nu_{x_0}} + \dprb{h^\#,\nu_{x_0}^\infty} \frac{\di \lambda_\nu}{\di \Lcal^d}(x_0)
\]
for all quasiconvex $h \in \Crm(\R^{m \times d})$ with linear growth at infinity.
\end{proposition}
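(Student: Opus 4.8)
The plan is to localize at the regular point $x_0$, reduce the inequality to a single application of quasiconvexity on the unit ball, and then absorb the possible non-existence of a recession function of $h$ by a monotone approximation. Fix $x_0\in\Omega$ in the full-measure set provided by Proposition~\ref{prop:localize_reg} and, in addition, such that $\frac{\di\lambda_\nu}{\di\Lcal^d}(x_0)<\infty$ and $\dprb{\abs{\frarg},\nu_{x_0}}<\infty$; all of these hold for $\Lcal^d$-a.e.\ $x_0$. Set $A_0:=\dprb{\id,\nu_{x_0}}+\dprb{\id,\nu_{x_0}^\infty}\frac{\di\lambda_\nu}{\di\Lcal^d}(x_0)$ and let $\sigma\in\GYbf(\Bbb^d;\R^{m\times d})$ be the regular tangent Young measure at $x_0$, so that $\sigma_y=\nu_{x_0}$, $\sigma_y^\infty=\nu_{x_0}^\infty$ a.e.\ and $\lambda_\sigma=\frac{\di\lambda_\nu}{\di\Lcal^d}(x_0)\,\Lcal^d\restrict\Bbb^d$. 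By~\eqref{eq:loc_reg_barycenter} its barycenter is $[\sigma]=A_0\,\Lcal^d\restrict\Bbb^d=Du$ with $u(x):=A_0x$, and $\lambda_\sigma(\partial\Bbb^d)=0$. Hence Lemma~\ref{lem:boundary_adjust} furnishes a generating sequence $(v_j)\subset(\Wrm^{1,1}\cap\Crm^\infty)(\Bbb^d;\R^m)$ with $Dv_j\toY\sigma$ and $v_j|_{\partial\Bbb^d}=u|_{\partial\Bbb^d}$.

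Since $v_j-u\in\Wrm_0^{1,1}(\Bbb^d;\R^m)$ and $\nabla v_j=A_0+\nabla(v_j-u)$, quasiconvexity of $h$ (in the $\Wrm_0^{1,1}$-formulation, valid under linear growth) gives $h(A_0)\le\dashint_{\Bbb^d}h(\nabla v_j)\dd x$ for every $j$. To pass to the limit, apply Lemma~\ref{lem:integrand_approx} to $h$ regarded as an upper semicontinuous function: there is a decreasing sequence $(\ONE\otimes h_k)\subset\Ebf(\Bbb^d;\R^{m\times d})$ with $h_k\to h$ and $h_k^\infty\to h^\#$ pointwise and linear growth constants bounded by that of $h$. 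From $h\le h_k$ and the above,
\[
  h(A_0)\le\dashint_{\Bbb^d}h_k(\nabla v_j)\dd x\qquad\text{for all }j,k,
\]
and letting $j\to\infty$ — using $Dv_j\toY\sigma$, that $\ONE\otimes h_k\in\Ebf(\Bbb^d;\R^{m\times d})$ and that the $Dv_j$ have no singular part, together with~\eqref{eq:loc_reg_ddpr} applied to $h_k$ — we obtain
\[
  h(A_0)\le\dprb{h_k,\nu_{x_0}}+\dprb{h_k^\infty,\nu_{x_0}^\infty}\,\frac{\di\lambda_\nu}{\di\Lcal^d}(x_0)\qquad\text{for all }k.
\]

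Finally, letting $k\to\infty$: since $\abs{h_k}\le M(1+\abs{\frarg})$ with $M$ the linear growth constant of $h$ and $\dprb{1+\abs{\frarg},\nu_{x_0}}<\infty$, and since $\abs{h_k^\infty}\le M$ on the sphere $\partial\Bbb^{m\times d}$ while $\nu_{x_0}^\infty$ is a probability measure, dominated convergence along the decreasing sequences $h_k\downarrow h$, $h_k^\infty\downarrow h^\#$ gives $\dprb{h_k,\nu_{x_0}}\to\dprb{h,\nu_{x_0}}$ and $\dprb{h_k^\infty,\nu_{x_0}^\infty}\to\dprb{h^\#,\nu_{x_0}^\infty}$. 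This yields the claimed inequality, recalling $A_0=\dprb{\id,\nu_{x_0}}+\dprb{\id,\nu_{x_0}^\infty}\frac{\di\lambda_\nu}{\di\Lcal^d}(x_0)$.

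The only genuinely delicate point is the passage $j\to\infty$: because $h$ need not admit a recession function, $\sigma$ cannot be tested directly against $\ONE\otimes h$, which is why the monotone approximation by the $h_k\in\Ebf$ is inserted — crucially, the quasiconvexity inequality is only an \emph{upper} bound on $h(A_0)$, so it survives the substitution $h\le h_k$. The remaining ingredients are direct invocations of the Localization Principle at regular points, the boundary-adjustment lemma, and the definition of quasiconvexity, so they require no further work.
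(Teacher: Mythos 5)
Your proof is correct and follows essentially the same route as the paper: localize at a regular point via Proposition~\ref{prop:localize_reg}, adjust boundary values with Lemma~\ref{lem:boundary_adjust} so that quasiconvexity applies with the affine comparison $A_0x$, and then pass to the limit by the monotone approximation $h_k\downarrow h$, $h_k^\infty\downarrow h^\#$ from Lemma~\ref{lem:integrand_approx} combined with~\eqref{eq:loc_reg_ddpr}. The only cosmetic difference is that you interchange the order of the bounds ($h\le h_k$ before letting $j\to\infty$, rather than after taking the $\limsup$ in $j$), which changes nothing.
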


\begin{proof}
Let $\sigma \in \GYbf(\Bbb^d;\R^{m \times d})$ be the regular tangent Young measure to $\nu$ at a suitable $x_0 \in \Omega$ as in Proposition~\ref{prop:localize_reg}. In particular, $[\sigma] = A_0 \Lcal^d \restrict \Bbb^d$, where
\[
  A_0 = \frac{[\sigma](\Bbb^d)}{\abs{\Bbb^d}} = \dprb{\id,\nu_{x_0}}
  + \dprb{\id,\nu_{x_0}^\infty} \frac{\di \lambda_\nu}{\di \Lcal^d}(x_0)
  \quad \in \R^{m \times d}.
\]
Use Lemma~\ref{lem:boundary_adjust} to get a sequence $(v_n) \subset \Wrm^{1,1}(\Bbb^d;\R^m)$ with $Dv_n \toY \sigma$ and $v_n(x) = A_0 x$ on $\partial \Bbb^d$. Hence, for all quasiconvex $h \in \Crm(\R^{m \times d})$ with $\ONE \otimes h \in \Ebf(\Bbb^d;\R^{m \times d})$, we have
\[
  h(A_0) \leq \dashint_{\Bbb^d} h(\nabla v_n) \dd x  \qquad\text{for all $n \in \N$.}
\]
Now, use Lemma~\ref{lem:integrand_approx} to get a collection $\{\ONE \otimes h_k\} \subset \Ebf(\Omega;\R^{m \times d})$ such that $h_k \todown h$, $h_k^\infty \todown h^\#$ pointwise, and all $h_k$ have uniformly bounded linear growth constants. Then, by~\eqref{eq:loc_reg_ddpr}, for all $k \in \N$
\begin{align*}
  h(A_0) &\leq \limsup_{n\to\infty} \dashint_{\Bbb^d} h(\nabla v_n) \dd x
    \leq \lim_{n\to\infty} \dashint_{\Bbb^d} h_k(\nabla v_n) \dd x \\
  &= \frac{\ddprb{\ONE_{\cl{Q}} \otimes h_k,\sigma}}{\abs{\Bbb^d}}
    = \dprb{h_k,\nu_{x_0}} + \dprb{h_k^\infty,\nu_{x_0}^\infty} \frac{\di \lambda_\nu}{\di \Lcal^d}(x_0).
\end{align*}
Letting $k\to\infty$ together with the monotone convergence theorem proves the claim of the proposition.
\end{proof}

Establishing a Jensen-type inequality for the singular points is more involved, but the basic principle of blowing up around a point $x_0$ and then using quasiconvexity on the blow-up limit remains the same. Additionally, however, we need to include an averaging procedure since tangent Young measures at singular points usually do not have an affine function as underlying deformation. For this, we need the information provided by the Rigidity Lemma~\ref{lem:BV_rigidity}.

\begin{proposition} \label{prop:Jensen_singular}
Let $\nu \in \GYbf(\Omega;\R^{m \times d})$ be a gradient Young measure. Then, for $\lambda_\nu^s$-almost every $x_0 \in \Omega$ it holds that
\[
   g \bigl( \dprb{\id,\nu_{x_0}^\infty} \bigr) \leq \dprb{g,\nu_{x_0}^\infty}
\]
for all quasiconvex and positively $1$-homogeneous functions $g \in \Crm(\R^{m \times d})$.
\end{proposition}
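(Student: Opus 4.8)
The plan is to mimic the structure of the proof of Proposition~\ref{prop:Jensen_regular}, but using the Localization Principle at singular points (Proposition~\ref{prop:localize_sing}) together with the Rigidity Lemma~\ref{lem:BV_rigidity} to handle the fact that the blow-up limit need not be affine. Fix a point $x_0 \in S$ at which the conclusions of Proposition~\ref{prop:localize_sing} hold and for which, additionally, $P_0 := \frac{\di [\nu]}{\di \abs{[\nu]}}(x_0) = \frac{\di D^s u}{\di\abs{D^s u}}(x_0)$ exists as a genuine limit (using $[\nu]=Du$ for a generating sequence $u_j\toweakstar u$; since $\sigma_y=\delta_0$ a.e., the singular part of $[\nu]$ is what matters, and $\lambda_\nu^s$-a.e.\ point is a Lebesgue point). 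Pick an open cube $Q$ centred at $0$ with $\abs{Q}=1$ and let $\sigma \in \GYbf(Q;\R^{m\times d})$ be the singular tangent Young measure, so that $\sigma_y=\delta_0$ a.e., $\lambda_\sigma(Q)=1$, $\sigma_y^\infty=\nu_{x_0}^\infty$ for $\lambda_\sigma$-a.e.\ $y$, and $[\sigma]=Dv$ for some $v\in\BV(Q;\R^m)$ with $Dv = \dprn{\id,\nu_{x_0}^\infty}\lambda_\sigma$ (by~\eqref{eq:loc_sing_1}, noting that $\lambda_\sigma \in \Tan_Q(\lambda_\nu^s,x_0)$ has polar $P_0$ by~\eqref{eq:Tan_density}). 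Write $A_0 := \dprn{\id,\nu_{x_0}^\infty} \in \R^{m\times d}$, so that $Dv = A_0 \lambda_\sigma$ (in particular $Dv = (A_0/\abs{A_0}) \abs{Dv}$ when $A_0\neq 0$).

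Next I would apply the Rigidity Lemma to $v$. The polar of $Dv$ is the constant matrix $A_0/\abs{A_0}$ (assume for now $A_0 \neq 0$; the case $A_0 = 0$ makes the asserted inequality $g(0)\le \dprn{g,\nu_{x_0}^\infty}$, which holds because $g(0)=0$ by $1$-homogeneity and $g = g^\# \ge 0$... actually one must be slightly careful, but positive $1$-homogeneity gives $g(0)=0$ and one concludes from $\dprn{g,\nu_{x_0}^\infty}\ge g^{qc}(0)$; I would instead note the statement is about quasiconvex $g$, and $0 = g(\langle\id,\nu_{x_0}^\infty\rangle)$ trivially when the barycenter is $0$). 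For $A_0\neq 0$, Lemma~\ref{lem:BV_rigidity} gives either $v(x)=v_0+\alpha P_0 x$ with $P_0 = A_0/\abs{A_0}$ when $\rk P_0 \ge 2$, or $v(x)=v_0+\psi(x\cdot\xi)a$ with $P_0 = a\otimes\xi$ when $\rk P_0 \le 1$. In the first (semi-regular) case $v$ is affine, so one can argue almost exactly as in Proposition~\ref{prop:Jensen_regular}: use Lemma~\ref{lem:boundary_adjust} to find a generating sequence $(v_n)\subset \Wrm^{1,1}(Q;\R^m)$ for $\sigma$ with $v_n = v$ on $\partial Q$, apply quasiconvexity of $h_k$ (from Lemma~\ref{lem:integrand_approx} applied to the upper semicontinuous $g = g^\#$) to get $g(A_0)=h_k(\dashint \nabla v_n)\le \dashint h_k(\nabla v_n)\,\di x$, pass to the limit in $n$ using~\eqref{eq:loc_sing_ddpr} with $U=Q$ (together with the vanishing of the regular part of $\sigma$), and then let $k\to\infty$; note that since $\rk P_0\ge 2$ occurs only on a $\lambda_\nu^s$-null set by Alberti, but we are not allowed to invoke that, so we genuinely must treat this case.

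The second (fully singular) case $P_0 = a\otimes\xi$ is the main obstacle, and it requires the averaging step familiar from the classical proof. Here $v(x) = v_0 + \psi(x\cdot\xi)a$ is one-directional, so $\nabla v_n$ (for a generating sequence $v_n$ with $v_n=v$ on $\partial Q$, via Lemma~\ref{lem:boundary_adjust}) oscillates only in the $\xi$-direction and is not close to the constant $A_0$. The fix is to average: one partitions $Q$ (or a slightly shrunk cube, adjusting for boundary effects) into thin slabs transverse to $\xi$ and replaces $v_n$ by a function that, on each slab, equals an affine interpolation, so that the resulting gradients have average $A_0$ over each small cube while the integral $\int h_k(\nabla \cdot)\,\di x$ is not increased (using convexity of $h_k$ along the rank-one direction $a\otimes\xi$, which follows from quasiconvexity—rank-one convexity—of $h_k$, and Jensen's inequality in one variable). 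Then quasiconvexity applied on each small cube gives $g(A_0) = h_k(A_0)\le \dashint h_k(\nabla(\text{modified }v_n))\,\di x \le \dashint h_k(\nabla v_n)\,\di x + o(1)$, and one passes to the limit in $n$ using~\eqref{eq:loc_sing_ddpr} and then in $k$ as before to conclude $g(\langle\id,\nu_{x_0}^\infty\rangle)=g(A_0)\le \langle g,\nu_{x_0}^\infty\rangle$. The delicate bookkeeping is in the slab construction—controlling the error terms coming from the slabs near $\partial Q$ and from the interpolation, and ensuring the modified sequence still (asymptotically) generates enough of $\sigma$ to apply~\eqref{eq:loc_sing_ddpr}; this is exactly the step where the one-directional structure from the Rigidity Lemma does the work that Alberti's Theorem does in the classical argument.
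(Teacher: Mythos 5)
Your overall architecture is exactly the paper's: localize at a singular point via Proposition~\ref{prop:localize_sing}, identify $A_0=\dprb{\id,\nu_{x_0}^\infty}$ as the polar direction of $Dv=A_0\lambda_\sigma$, split according to $\rk A_0$ via the Rigidity Lemma~\ref{lem:BV_rigidity}, and treat the semi-regular case $\rk A_0\geq 2$ as in Proposition~\ref{prop:Jensen_regular}. That part is sound (the detour through Lemma~\ref{lem:integrand_approx} is unnecessary, since $\ONE\otimes g\in\Ebf$ already for continuous positively $1$-homogeneous $g$; and your treatment of $A_0=0$ is circular as written --- \enquote{$\dprb{g,\nu_{x_0}^\infty}\ge g^{qc}(0)$} is precisely the Jensen inequality being proved --- but this case is handled by running the $\rk A_0\geq2$ argument verbatim, since then $v$ is constant and quasiconvexity still gives $g(0)\le\dashint_Q g(\nabla v_n)\dd x$).

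The genuine gap is the averaging mechanism in the fully singular case. You propose to replace $v_n$ by slab-wise affine interpolations and to control $\int h(\nabla(\text{modified }v_n))$ by $\int h(\nabla v_n)$ using convexity of $h$ along the rank-one line through $a\otimes\xi$ plus one-dimensional Jensen. But only the weak* \emph{limit} $v=\psi(x_1)a$ is one-directional; the gradients $\nabla v_n$ of the generating sequence are general matrix fields (necessarily so, since they carry the concentration measure $\nu_{x_0}^\infty$, which lives on all of $\partial\Bbb^{m\times d}$), and the deviation of $\nabla v_n$ from its slab average is not of the form $b(x)\otimes\xi$. Replacing a general gradient field by local averages and invoking Jensen requires full convexity, which $h$ does not have, so the claim \enquote{the integral is not increased} fails at exactly the point where the work must be done. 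The paper's averaging is different and sidesteps this: one tiles $Q$ with $n^d$ shrunken copies of $v_n$ glued by the staircase $a\floor{x_1+1/2}$ --- the traces match because $v_n|_{\partial Q}=v|_{\partial Q}$ and $v$ is one-directional with $\psi(1/2-0)-\psi(-1/2+0)=1$ --- producing $u_n\in\Wrm^{1,1}(Q;\R^m)$ with $\int_Q g(\nabla u_n)\dd x=\int_Q g(\nabla v_n)\dd x$ \emph{exactly} by change of variables and $u_n\to A_0x$ in $\Lrm^1$; after the boundary adjustment of Lemma~\ref{lem:boundary_adjust}, quasiconvexity is applied once, on the whole cube, at $A_0$. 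No rank-one convexity or interpolation estimate enters. Your slab idea could perhaps be repaired along Fonseca--M\"{u}ller lines (quasiconvexity on each subcube at the local average $t_iA_0$, then convexity of $t\mapsto g(tA_0)$, using $D\psi\ge0$), but that needs a gluing argument on each subcube which your sketch does not supply.
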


\begin{remark}
Notice that we did not say anything about the validity of a singular Jensen-type inequality at boundary points $x_0 \in \partial \Omega$. This is also not needed in the sequel.
\end{remark}

\begin{proof}
Let $S \subset \Omega$ be as in Proposition~\ref{prop:localize_sing} and fix $x_0 \in S$. Define (which is possible $\lambda_\nu^s$-almost everywhere)
\[
  A_0 := \dprb{\id,\nu_{x_0}^\infty}.
\]
We distinguish two cases:

\proofstep{Case 1: $\rk A_0 \geq 2$ or $A_0 = 0$ (semi-regular blow-up).}\\
Let $\sigma \in \GYbf(Q;\R^{m \times d})$ be a singular tangent Young measure to $\nu$ at $x_0$, whose existence is ascertained by Proposition~\ref{prop:localize_sing}, $Q = (-1/2,1/2)^d$ the unit cube. In this semi-regular case, we can proceed analogously to the regular blow-up in Proposition~\ref{prop:Jensen_regular}: From Lemma~\ref{lem:boundary_adjust} take $(v_n) \subset \Wrm^{1,1}(Q;\R^m)$ with $\nabla v_n \toY \sigma$, $v_n \toweakstar v \in \BV(Q;\R^m)$, and $v_n|_{\partial Q} = v|_{\partial Q}$. Also, $Dv = A_0 \lambda_\sigma$ and so, invoking the Rigidity Lemma~\ref{lem:BV_rigidity}~(i),
\[
  v(x) = A_0 x,  \quad x \in Q, \qquad\text{and}\qquad
  Dv = A_0 \Lcal^d \restrict Q,
\]
where, without loss of generality, we assumed that the constant part of $v$ is zero. The quasiconvexity of $g$ immediately yields
\[
  g(A_0) \leq \dashint_Q g(\nabla v_n) \dd x
\]
for all $n \in \N$. Then,
\[
  g(A_0) \leq \limsup_{n\to\infty} \dashint_Q g(\nabla v_n) \dd x
  = \ddprb{\ONE_{\cl{Q}} \otimes g,\sigma} = \dprb{g,\nu_{x_0}^\infty},
\]
where the last equality follows from~\eqref{eq:loc_sing_ddpr}.

\proofstep{Case 2: $A_0 = a \otimes \xi$ for $a \in \R^m \setminus \{0\}$, $\xi \in \Sbb^{d-1}$ (fully singular blow-up).}\\ To simplify notation we assume that $\xi = \mathrm{e}_1 = (1,0,\ldots,0)^T$; otherwise the unit cube $Q = (-1/2,1/2)^d$ in the following proof has to be replaced by a rotated unit cube with one face orthogonal to $\xi$.

Like in Case 1, take $\sigma \in \GYbf(Q;\R^{m \times d})$ to be a singular tangent Young measure to $\nu$ at $x_0$ as in Proposition~\ref{prop:localize_sing} and let $(v_n) \subset \Wrm^{1,1}(Q;\R^m)$ with $\nabla v_n \toY \sigma$, $v_n \toweakstar v \in \BV(Q;\R^m)$, and $v_n|_{\partial Q} = v|_{\partial Q}$. Using case (ii) of the Rigidity Lemma~\ref{lem:BV_rigidity}, and adding a constant function if necessary, $v$ can be written in the form
\[
  v(x) = \psi(x_1) a  \qquad\text{for some $\psi \in \BV(-1/2,1/2)$.}
\]
Observe that
\[
  \abs{A_0} = \abs{A_0} \lambda_\sigma(Q) = \abs{[\sigma]}(Q) = \abs{Dv}(Q) = \abs{D\psi}((-1/2,1/2)) \abs{A_0},
\]
and hence
\[
  \abs{D\psi}((-1/2,1/2)) = \psi(1/2-0)-\psi(-1/2+0) = 1.
\]

Set
\[
  \tilde{u}_n(x) := v_n \biggl( x - \floorBB{x + \frac{1}{2}} \biggr) + a \floorBB{x_1 + \frac{1}{2}}, \qquad x \in \R^d,
\]
where $\floor{s}$ is the largest integer smaller than or equal to $s \in \R$ and we have also set $\floor{x} := (\floor{x_1},\ldots,\floor{x_d})$ for $x \in \R^d$. Then define $(u_n) \subset \BV(Q;\R^m)$ by
\[
  u_n = \frac{\tilde{u}_n(nx)}{n},  \qquad x \in Q,
\]
and observe that
\[
  \nabla u_n(x) = \sum_{z \in \{ 0,\ldots,n-1 \}^d} \nabla v_n(nx-z)
    \ONE_{Q(z/n,1/n)}(x)
\]
Furthermore, $Du_n$ has zero singular part, because the gluing discontinuities over the hyperplanes $\{x_1 = \ell/n + 1/(2n)\}$, are compensated by the jumps of the staircase term in the definition of $\tilde{u}_n$.

Moreover, $u_n \to A_0 x$ in $\Lrm^1(Q;\R^m)$ by a change of variables and since $\psi$ is bounded. Therefore, using Lemma~\ref{lem:boundary_adjust} again, we can find another sequence $(w_n) \subset \Wrm^{1,1}(Q;\R^m)$ with $w_n(x) = A_0 x$ for all $x \in \partial Q$, and such that the sequences $(\nabla u_n)$ and $(\nabla w_n)$ generate the same (unnamed) Young measure, in particular
\[
  \lim_{n\to\infty} \int_Q g(\nabla w_n) \dd x
  = \lim_{n\to\infty} \int_Q g(\nabla u_n) \dd x
\]
for all $g$ as in the statement of the proposition. Thus, we get by quasiconvexity
\[
  g(A_0) \leq \dashint_{Q} g(\nabla w_n) \dd x
\]
for all $n \in \N$. Then use~\eqref{eq:loc_sing_ddpr} to deduce
\begin{align*}
  g(A_0) &\leq \limsup_{n\to\infty} \dashint_Q g(\nabla w_n) \dd x
    = \lim_{n\to\infty} \dashint_Q g(\nabla u_n) \dd x \\
  &= \lim_{n\to\infty} \sum_{z \in \{ 0,\ldots,n-1 \}^d} \int_{Q(z/n,1/n)}
    g\bigl( \nabla v_n(nx-z) \bigr) \dd x \\
  &= \lim_{n\to\infty} \sum_{z \in \{ 0,\ldots,n-1 \}^d} \frac{1}{n^d}
    \int_Q g(\nabla v_n) \dd y \\
  &= \lim_{n\to\infty} \int_Q g(\nabla v_n) \dd y
    = \ddprb{\ONE_{\cl{Q}} \otimes g,\sigma}
    = \dprb{g,\nu_{x_0}^\infty}.
\end{align*}
This concludes the proof.
\end{proof}

\subsection{Necessary conditions for gradient Young measures and lower semicontinuity}

The following theorem exhibits necessary conditions for a Young measure to be a gradient Young measure.

\begin{theorem} \label{thm:GYM_necessary}
Let $\nu \in \GYbf(\Omega;\R^{m \times d})$ be a gradient Young measure. Then, for all quasiconvex $h \in \Crm(\R^{m \times d})$ with linear growth at infinity, it holds that
\[
  h \biggl( \dprb{\id,\nu_x} + \dprb{\id,\nu_x^\infty} \frac{\di \lambda_\nu}{\di \Lcal^d}(x) \biggr)
    \leq \dprb{h,\nu_x} + \dprb{h^\#,\nu_x^\infty} \frac{\di \lambda_\nu}{\di \Lcal^d}(x)
\]
for $\Lcal^d$-almost every $x \in \Omega$, and
\[
  h^\# \bigl( \dprb{\id,\nu_x^\infty} \bigr) \leq \dprb{h^\#,\nu_x^\infty}
\]
for $\lambda_\nu^s$-almost every $x \in \Omega$.
\end{theorem}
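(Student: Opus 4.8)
The plan is to obtain the two displayed inequalities directly from the Jensen-type Propositions~\ref{prop:Jensen_regular} and~\ref{prop:Jensen_singular}. The inequality holding for $\Lcal^d$-a.e.\ $x$ is literally the assertion of Proposition~\ref{prop:Jensen_regular} applied to the given $\nu \in \GYbf(\Omega;\R^{m \times d})$, so nothing remains to be done there. The only genuine work concerns the singular inequality, and the obstacle is that $h^\#$ need not be continuous and therefore cannot be inserted into Proposition~\ref{prop:Jensen_singular} as it stands; the remedy I would use is to approximate $h^\#$ from above by \emph{continuous} quasiconvex positively $1$-homogeneous integrands.

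Concretely, fix a quasiconvex $h \in \Crm(\R^{m \times d})$ with linear growth constant $M$, and recall from Section~\ref{ssc:qc} that $h^\#$ is then quasiconvex, positively $1$-homogeneous and upper semicontinuous, with $\abs{h^\#(A)} \le M\abs{A}$. First apply Lemma~\ref{lem:integrand_approx} to $h$ (upper semicontinuous, linear growth $M$): this yields a decreasing sequence $(\ONE \otimes h_k) \subset \Ebf(\Omega;\R^{m \times d})$ with $h_k \downarrow h$ and $h_k^\infty \downarrow h^\#$ pointwise, all $h_k$ having linear growth constant at most $M$; in particular each $h_k^\infty$ is continuous, positively $1$-homogeneous, and bounded by $M$ on $\partial\Bbb^{m \times d}$, and $(h_k^\infty)$ is decreasing. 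Next let $g_k \colon \R^{m \times d} \to \R$ be the quasiconvex envelope of $h_k^\infty$. Since $h_k^\infty \ge h^\#$ and $h^\#$ is quasiconvex, monotonicity of the envelope gives $g_k \ge h^\#$, so $g_k$ is real-valued; being quasiconvex with linear growth (it is squeezed between $h^\#$ and $h_k^\infty$, both of linear growth $\le M$), it is Lipschitz continuous by Morrey's theorem (cf.~\cite{Morr66MICV}); and it is positively $1$-homogeneous because $h_k^\infty$ is. Thus $g_k$ is a continuous, quasiconvex, positively $1$-homogeneous integrand with $h^\# \le g_k \le h_k^\infty$.

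Now I would apply Proposition~\ref{prop:Jensen_singular} to each $g_k$. Since its exceptional $\lambda_\nu^s$-null set (namely $\Omega \setminus S$, inherited from Proposition~\ref{prop:localize_sing}) does not depend on the particular quasiconvex positively $1$-homogeneous integrand, for $\lambda_\nu^s$-a.e.\ $x_0 \in \Omega$ and every $k \in \N$ one gets
\[
  h^\#\bigl(\dprb{\id,\nu_{x_0}^\infty}\bigr) \le g_k\bigl(\dprb{\id,\nu_{x_0}^\infty}\bigr)
  \le \dprb{g_k,\nu_{x_0}^\infty} \le \dprb{h_k^\infty,\nu_{x_0}^\infty}.
\]
Letting $k \to \infty$ and using that $h_k^\infty \downarrow h^\#$ on $\partial\Bbb^{m \times d}$ with the uniform bound $M$, while $\nu_{x_0}^\infty$ is a probability measure there, monotone convergence gives $\dprb{h_k^\infty,\nu_{x_0}^\infty} \to \dprb{h^\#,\nu_{x_0}^\infty}$, and the singular inequality $h^\#(\dprb{\id,\nu_{x_0}^\infty}) \le \dprb{h^\#,\nu_{x_0}^\infty}$ follows.

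The heart of the argument — and the only point that is not bookkeeping — is the failure of continuity of $h^\#$; the quasiconvex-envelope construction is precisely what reconciles approximation from above with quasiconvexity, and the one step that requires care is to check that $g_k$ stays finite, hence Lipschitz, which is exactly where the quasiconvexity (and finiteness) of $h^\#$ itself is used.
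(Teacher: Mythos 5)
Your argument is correct, but for the singular inequality you take a long detour that the paper's own proof shows to be unnecessary. The premise of your construction --- that $h^\#$ ``need not be continuous and therefore cannot be inserted into Proposition~\ref{prop:Jensen_singular}'' --- is false in this setting: as recorded in Section~\ref{ssc:qc}, $h^\#$ is quasiconvex whenever $h$ is (Fatou), it inherits the linear growth bound $\abs{h^\#(A)} \leq M\abs{A}$, and quasiconvex functions with linear growth are Lipschitz continuous by Morrey's regularity result (\cite{Morr66MICV}, Lemma~2.2 of~\cite{BaKiKr00RQE}). Hence $h^\#$ is itself a continuous, quasiconvex, positively $1$-homogeneous integrand and Proposition~\ref{prop:Jensen_singular} applies to it \emph{directly} with $g = h^\#$; this one-line observation, together with Proposition~\ref{prop:Jensen_regular} for the regular part, is the entire proof in the paper. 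You in fact invoke exactly this Morrey argument to show that your envelopes $g_k$ are Lipschitz, and you use the quasiconvexity of $h^\#$ to keep them finite --- the same two facts, applied to $h^\#$ itself, would have ended the proof immediately. That said, your approximation scheme (monotone upper approximation via Lemma~\ref{lem:integrand_approx} followed by quasiconvexification, squeezing $h^\# \leq g_k \leq h_k^\infty$ and passing to the limit by monotone convergence) is sound as written: the exceptional set in Proposition~\ref{prop:Jensen_singular} is indeed uniform in $g$ (it is already quantified that way in the statement), the quasiconvex envelope of the positively $1$-homogeneous $h_k^\infty$ is again positively $1$-homogeneous and, being pinched between two functions of linear growth, is finite and Lipschitz. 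What your route buys is robustness --- it would survive in a situation where the continuity of the relevant recession function genuinely fails --- but here it replaces a citation with a page of work.
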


The proof is contained in Propositions~\ref{prop:Jensen_regular} and~\ref{prop:Jensen_singular} once we notice that if $h$ is quasiconvex, then its generalized recession function $h^\#$ is quasiconvex as well (by Fatou's lemma), and hence continuous (see Section~\ref{ssc:qc}).

In the situation of the theorem there exists $u \in \BV(\Omega;\R^m)$ such that $Du = [\nu] \restrict \Omega$ and the above conditions become
\[
  h(\nabla u(x)) \leq \dprb{h, \nu_x} + \dprb{h^\#, \nu_x^\infty} \,
  \frac{\di \lambda_\nu}{\di \Lcal^d}(x)
  \qquad\text{for $\Lcal^d$-almost every $x \in \Omega$,}
\]
and
\[
  h^\# \Big( \frac{D^s u}{\abs{D^s u}} \Big) \, \abs{D^s u}
  \leq \dprb{ h^\#, \nu_x^\infty} \, \lambda_\nu^s \restrict \Omega
  \qquad\text{as measures,}
\]
for all upper semicontinuous and quasiconvex $h \colon \R^{m \times d} \to \R$ with linear growth at infinity.

We can now prove the main lower semicontinuity result:

\begin{theorem}[Lower semicontinuity in $\BV$] \label{thm:BV_lsc}
Let $\Omega \subset \R^d$ be a bounded Lipschitz domain with boundary unit inner normal $n_{\Omega} \colon \partial \Omega \to \Sbb^{d-1}$ and let $f \colon \cl{\Omega} \times \R^{m \times d} \to \R$ be a Carath\'{e}odory integrand with linear growth at infinity that is quasiconvex in its second argument and for which the recession function $f^\infty$ exists in the sense of~\eqref{eq:f_infty} and is (jointly) continuous. Then, the functional
\begin{align*}
  \Fcal(u) := &\int_\Omega f( x, \nabla u) \dd x
    + \int_\Omega f^\infty \Bigl( x, \frac{\di D^s u}{\di \abs{D^s u}} \Bigr)
    \dd \abs{D^s u} \\
  &\qquad + \int_{\partial\Omega} f^\infty \bigl( x, u|_{\partial \Omega}
    \otimes n_{\Omega} \bigr) \dd \Hcal^{d-1},
  \qquad u \in \BV(\Omega;\R^m),
\end{align*}
is sequentially lower semicontinuous with respect to weak* convergence in the space $\BV(\Omega;\R^m)$.
\end{theorem}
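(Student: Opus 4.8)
The plan is to deduce Theorem~\ref{thm:BV_lsc} from the necessary conditions for gradient Young measures (Theorem~\ref{thm:GYM_necessary}) via a by-now-standard Young measure argument. First I would take a sequence $u_j \toweakstar u$ in $\BV(\Omega;\R^m)$; since $(u_j)$ is norm-bounded, the derivatives $Du_j$ generate (up to a subsequence, which it suffices to treat) a gradient Young measure $\nu \in \GYbf(\Omega;\R^{m\times d})$. The subtlety here is that $\Omega$ is merely a bounded Lipschitz domain, so mass of $|Du_j|$ can concentrate on $\partial\Omega$ in the limit; this is exactly what the boundary term in $\Fcal$ is designed to capture. The standard device is to view the $u_j$ as elements of $\BV$ on a slightly larger domain $\widetilde\Omega \supset\supset \Omega$ by extending each $u_j$ (and $u$) by a fixed $\BV$-function agreeing with the trace, or more simply to exploit that $\lambda_\nu$ is a measure on $\cl\Omega$ and record its part on $\partial\Omega$ separately. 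The key identity I would want is that, with $u_j \toweakstar u$ and $Du_j \toY \nu$, the barycenter satisfies $[\nu] = Du$ on $\Omega$ while $\lambda_\nu\restrict\partial\Omega$ together with $\nu^\infty$ on $\partial\Omega$ encodes the trace jump $u|_{\partial\Omega}\otimes n_\Omega$; this is where one invokes the structure of $\BV$-traces on Lipschitz domains.

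Next, I would lower-bound $\Fcal(u_j)$ by $\ddpr{f,\epsilon_{Du_j}}$ and pass to the liminf. Using Proposition~\ref{prop:ext_repr}(i) (applicable since $f$ is Carath\'eodory with jointly continuous recession function $f^\infty$), the functional $v \mapsto \ddpr{f,\epsilon_{Dv}}$ is continuous along the generating sequence, so
\[
  \liminf_{j\to\infty} \Fcal(u_j) \;\geq\; \ddprb{f,\nu} \;=\; \int_\Omega \dprb{f(x,\frarg),\nu_x}\dd x + \int_{\cl\Omega}\dprb{f^\infty(x,\frarg),\nu_x^\infty}\dd\lambda_\nu(x),
\]
once the boundary bookkeeping above is set up so that the third integral in $\Fcal$ reappears as the $\partial\Omega$-part of the second integral here. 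Then I would apply Theorem~\ref{thm:GYM_necessary}: with $h = f(x,\frarg)$ for a.e.\ fixed $x$ (quasiconvex with linear growth, so $h^\# = f^\infty(x,\frarg)$ since $f^\infty$ exists as a genuine limit — this is precisely the point of Remark~\ref{rem:rec_funct} about needing the strong recession function, not just the limsup), the regular Jensen inequality gives $f(x,\nabla u(x)) \leq \dpr{f(x,\frarg),\nu_x} + \dpr{f^\infty(x,\frarg),\nu_x^\infty}\frac{\di\lambda_\nu}{\di\Lcal^d}(x)$ a.e., and the singular Jensen inequality gives $f^\infty(x,\dpr{\id,\nu_x^\infty}) \leq \dpr{f^\infty(x,\frarg),\nu_x^\infty}$ for $\lambda_\nu^s$-a.e.\ $x$. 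Integrating the first over $\Omega$ and the second against $\lambda_\nu^s$ (which dominates both $|D^s u|$ on $\Omega$ via the barycenter relation $D^s u = \dpr{\id,\nu_x^\infty}\lambda_\nu^s\restrict\Omega$ and the boundary measure $\Hcal^{d-1}\restrict\partial\Omega$), and using positive $1$-homogeneity of $f^\infty$ in the second argument to rescale the polar, yields $\Fcal(u) \leq \ddpr{f,\nu} \leq \liminf_j \Fcal(u_j)$.

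The main obstacle I anticipate is the boundary term: one must verify that when the generating sequence loses mass to $\partial\Omega$, the limiting Young measure's restriction $\lambda_\nu\restrict\partial\Omega$, paired with $\nu_x^\infty$, reproduces exactly $\int_{\partial\Omega} f^\infty(x,u|_{\partial\Omega}\otimes n_\Omega)\dd\Hcal^{d-1}$ in the liminf — or at least bounds it from above after applying the singular Jensen inequality at boundary points. The clean way around this is to extend the problem: fix a ball $\widetilde\Omega \supset\supset\Omega$, extend $u$ to $\bar u \in \BV(\widetilde\Omega;\R^m)$ with $\bar u = 0$ outside a neighborhood, and correspondingly extend each $u_j$ by the \emph{same} $\bar u$ on $\widetilde\Omega\setminus\Omega$; then $u_j \toweakstar \bar u$ in $\BV(\widetilde\Omega;\R^m)$, the jump of $\bar u$ across $\partial\Omega$ contributes exactly $(u|_{\partial\Omega} - \bar u|_{\partial\Omega}^{\text{out}})\otimes n_\Omega \,\Hcal^{d-1}\restrict\partial\Omega$ to $D\bar u$, and applying the interior analysis on $\widetilde\Omega$ and subtracting the (fixed) contribution from $\widetilde\Omega\setminus\cl\Omega$ isolates the desired boundary term. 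The remaining work — checking that $f$ Carath\'eodory rather than continuous still permits the Jensen inequalities (it does, by the measurable selection implicit in Theorem~\ref{thm:GYM_necessary}) and that the approximation of $f^\infty$ by the strong recession limit is what makes the singular term match — is then routine given the machinery already assembled.
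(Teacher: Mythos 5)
Your proposal follows essentially the same route as the paper's proof: extend everything to a larger domain $\Omega' \supset\supset \Omega$ so that $\partial\Omega$ becomes interior and the trace term is absorbed into the singular part of the extended derivative, generate $\nu \in \GYbf(\Omega';\R^{m \times d})$, identify $\liminf_j \Fcal(u_j)$ with $\ddprb{f,\nu}$ via Proposition~\ref{prop:ext_repr}, and conclude with the pointwise Jensen inequalities of Theorem~\ref{thm:GYM_necessary}. The only detail to pin down is that the extension of the $u_j$ should be by zero --- so the jump across $\partial\Omega$ is exactly $u_j|_{\partial\Omega} \otimes n_\Omega$ and no \enquote{subtraction} of a nonlinearly-entering $f^\infty$-contribution is needed --- together with the positively $1$-homogeneous extension of $f$ off $\cl{\Omega}$, which makes the enlarged region contribute nothing.
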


\begin{proof}
Let $u_j \toweakstar u$ in $\BV(\Omega;\R^m)$. Take a larger Lipschitz domain $\Omega' \supset\supset \Omega$ and consider all $u_j, u$ to be extended to $\Omega'$ by zero. Assume also that $Du_j \toY \nu \in \GYbf(\Omega';\R^{m \times d})$, for which it follows that
\[
  [\nu] = Du \restrict \Omega + (u|_{\partial \Omega} \otimes n_{\Omega}) \, \Hcal^{d-1} \restrict \partial \Omega.
\]
This entails taking a subsequence if necessary, but since we will show an inequality for all such subsequences, it also holds for the original sequence. Observe that if $\lambda_\nu^*$ is the singular part of $\lambda_\nu$ with respect to $\abs{D^s u} + \Hcal^{d-1} \restrict \partial\Omega$, i.e.\ $\lambda_\nu^*$ is concentrated in an $(\abs{D^s u} + \Hcal^{d-1} \restrict \partial\Omega)$-negligible set, then
\begin{align*}
  &\dprb{\id,\nu_x} + \dprb{\id,\nu_x^\infty} \frac{\di \lambda_\nu}{\di \Lcal^d}(x)
    = \frac{\di [\nu]}{\di \Lcal^d}(x) = \begin{cases}
      \nabla u(x)  &\text{for $\Lcal^d$-a.e.\ $x \in \Omega$,} \\
      0            &\text{for $\Lcal^d$-a.e.\ $x \in \Omega' \setminus \Omega$,}
    \end{cases} \\
  &\frac{\dpr{\id,\nu_x^\infty}}{\abs{\dpr{\id,\nu_x^\infty}}}
    = \frac{\di [\nu]^s}{\di \abs{[\nu]^s}}(x)
    = \begin{cases}
      \displaystyle\frac{\di D^s u}{\di \abs{D^s u}}(x)
         &\text{for $\abs{D^su}$-a.e.\ $x \in \Omega$,} \\
      \displaystyle\frac{u|_{\partial \Omega}(x)}{\abs{u|_{\partial \Omega}(x)}} \otimes n_{\Omega}(x)
         &\text{for $\abs{u} \Hcal^{d-1}$-a.e.\ $x \in \partial \Omega$,}
    \end{cases} \\
  &\dprb{\id,\nu_x^\infty} = 0  \qquad\text{for $\lambda_\nu^*$-a.e.\
    $x \in \cl{\Omega'}$,} \\
  &\abs{\dpr{\id,\nu_x^\infty}} \lambda_\nu^s = \abs{D^s u}
    + \abs{u} \Hcal^{d-1} \restrict \partial \Omega, \\
  &\dprb{\id,\nu_x} = 0 \qquad\text{for all $x \in \Omega' \setminus \cl{\Omega}$,}\\
  &\lambda_\nu \restrict (\cl{\Omega'} \setminus \cl{\Omega}) = 0.
\end{align*}
Extend $f$ to $\Omega' \times \R^{m \times d}$ as follows: first extend $f^\infty$ restricted to $\cl{\Omega} \times \partial \Bbb^{m \times d}$ continuously to $\Omega' \times \partial \Bbb^{m \times d}$ and then set $f(x,A) := \abs{A}f^\infty(x,A/\abs{A})$ for $x \in \Omega' \setminus \cl{\Omega}$ and $A \in \R^{m \times d}$. This extended $f$ is still a Carath\'{e}odory function, $f^\infty$ is jointly continuous and $f(x,0) = 0$ for all $x \in \Omega' \setminus \cl{\Omega}$.

Then, from Theorem~\ref{thm:GYM_necessary} (in $\Omega'$) together with the extended representation result for generalized Young measures, Proposition~\ref{prop:ext_repr} in Section~\ref{ssc:YM}, we get
\begin{align*}
  \liminf_{j\to\infty} \Fcal(u_j) &= \int_{\Omega'} \dprb{f(x,\frarg),\nu_x}
    + \dprb{f^\infty(x,\frarg),\nu_x^\infty} \frac{\di \lambda_\nu}{\di \Lcal^d}(x)
    \dd x \\ 
  &\qquad + \int_{\cl{\Omega'}} \dprb{f^\infty(x,\frarg),\nu_x^\infty}
    \dd \lambda_\nu^s(x) \\
  &\geq \int_\Omega f \biggl( x, \dprb{\id,\nu_x} + \dprb{\id,\nu_x^\infty}
    \frac{\di \lambda_\nu}{\di \Lcal^d}(x) \biggr) \dd x \\
  &\qquad + \int_{\cl{\Omega}} f^\infty \bigl( x,
    \dprb{\id,\nu_x^\infty} \bigr) \dd \lambda_\nu^s(x) \\
  &= \Fcal(u).
\end{align*}
This proves the claim.
\end{proof}

\begin{remark}[Recession functions] \label{rem:rec_funct}
In comparison to previously known results, we have to assume that the \enquote{strong} recession function $f^\infty$ exists instead of merely using the upper generalized recession function $f^\#$. This is in fact an unavoidable phenomenon of our proof strategy without Alberti's Rank-One Theorem: It is well-known (see for instance Theorem~2.5~(iii) in~\cite{AliBou97NUIG}) that the natural recession function for lower semicontinuity is the \emph{lower} generalized recession function
\[
  f_\#(x,A) := \liminf_{t \to \infty} \frac{f(tA)}{t},
  \qquad \text{$x \in \cl{\Omega}$, $A \in \R^{m \times d}$.}
\]
Unfortunately, we cannot easily determine whether this function is quasiconvex, so the singular Jensen-type inequality from Prosposition~\ref{prop:Jensen_singular} is not applicable. The usual proof that $f^\#$ (and hence $f^\infty$) is quasiconvex whenever $f$ is, proceeds by virtue of Fatou's lemma, and this method fails for $f_\#$. One can show, however, that if $f_\#$ is known to be quasiconvex, then the lower semicontinuity theorem also holds for $f_\#$ in place of $f^\infty$. By Alberti's Rank-One Theorem we know that this is the same functional, since $f_\#(x,A) = f^\#(x,A)$ if $\rk A \leq 1$ (by the rank-one convexity of $f$). This should be contrasted with the fact that $f_\#$ and $f^\#$ may differ outside the rank-one cone, see~\cite{Mull92QFHD}.
\end{remark}

We also immediately get the following corollary on the functional without the boundary term:

\begin{corollary}
For every quasiconvex $f \in \Ebf(\Omega;\R^{m \times d})$, the functional
\[
  \Fcal(u) := \int_\Omega f( x, \nabla u) \dd x
  + \int_\Omega f^\infty \Bigl( x, \frac{\di D^s u}{\di \abs{D^s u}} \Bigr) \dd \abs{D^s u},
  \qquad u \in \BV(\Omega;\R^m),
\]
is sequentially lower semicontinuous with respect to all weakly*-converging sequences $u_j \toweakstar u$ in $\BV(\Omega;\R^m)$ if at least one of the following conditions is satisfied:
\[
  f \geq 0  \qquad\text{or}\qquad  \text{$\forall j$: $u_j|_{\partial \Omega} = u|_{\partial \Omega}$.}
\]
\end{corollary}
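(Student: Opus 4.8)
The plan is to reduce both statements to the $\BV$-Lower Semicontinuity Theorem~\ref{thm:BV_lsc}, which is applicable because $f\in\Ebf(\Omega;\R^{m\times d})$ forces the recession function $f^\infty$ to exist and be jointly continuous. Write $\Gcal$ for the functional appearing in the corollary and $\Fcal=\Gcal+B$ for the functional of Theorem~\ref{thm:BV_lsc}, where $B(v):=\int_{\partial\Omega}f^\infty(x,v|_{\partial\Omega}\otimes n_\Omega)\dd\Hcal^{d-1}$; note $B(v)$ is finite for every $v\in\BV(\Omega;\R^m)$ since $\abs{f^\infty(x,A)}\le M\abs{A}$ and $v|_{\partial\Omega}\in\Lrm^1(\partial\Omega,\Hcal^{d-1};\R^m)$. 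If $u_j|_{\partial\Omega}=u|_{\partial\Omega}$ for all $j$, then $B(u_j)=B(u)$ is a fixed finite constant, so Theorem~\ref{thm:BV_lsc} gives at once
\[
  \liminf_{j\to\infty}\Gcal(u_j)=\liminf_{j\to\infty}\Fcal(u_j)-B(u)\ge\Fcal(u)-B(u)=\Gcal(u);
\]
this settles the second case. (The trace hypothesis is genuinely used here, not merely weak* convergence of $u_j$, because the trace operator is not weakly* continuous.)

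For the case $f\ge0$ the naive comparison is useless, since $\Gcal\le\Fcal$ goes the wrong way; instead one reruns the Young measure argument from the proof of Theorem~\ref{thm:BV_lsc}, but localised on $\Omega$ itself and discarding the boundary contribution of $\lambda_\nu$, which is now nonnegative. Given $u_j\toweakstar u$ in $\BV(\Omega;\R^m)$, pass to a subsequence realising $\liminf_j\Gcal(u_j)$ and then, by the generation theorem, to a further subsequence with $Du_j\toY\nu\in\GYbf(\Omega;\R^{m\times d})$; one checks $[\nu]\restrict\Omega=Du$. Since $f$ is a Carath\'{e}odory integrand with jointly continuous recession function, Proposition~\ref{prop:ext_repr}(i) yields $\Gcal(u_j)=\ddprb{f,\epsilon_{Du_j}}\to\ddprb{f,\nu}$. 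Letting $\lambda_\nu^s$ be the singular part of $\lambda_\nu$ with respect to $\Lcal^d$ and using $f\ge0\Rightarrow f^\infty\ge0$ to drop the integral of $\dprb{f^\infty(x,\frarg),\nu_x^\infty}$ over $\partial\Omega$, one gets
\[
  \lim_{j\to\infty}\Gcal(u_j)=\ddprb{f,\nu}\ge\int_\Omega\Bigl[\dprb{f(x,\frarg),\nu_x}+\dprb{f^\infty(x,\frarg),\nu_x^\infty}\tfrac{\di\lambda_\nu}{\di\Lcal^d}(x)\Bigr]\dd x+\int_\Omega\dprb{f^\infty(x,\frarg),\nu_x^\infty}\dd\lambda_\nu^s(x).
\]

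To finish, invoke the Jensen-type inequalities exactly as in the proof of Theorem~\ref{thm:BV_lsc}, i.e.\ Theorem~\ref{thm:GYM_necessary} together with Proposition~\ref{prop:ext_repr} to pass to the $x$-dependent integrand, and the identities $\dprb{\id,\nu_x}+\dprb{\id,\nu_x^\infty}\tfrac{\di\lambda_\nu}{\di\Lcal^d}(x)=\nabla u(x)$ for $\Lcal^d$-a.e.\ $x\in\Omega$ and $\dprb{\id,\nu_x^\infty}\,\lambda_\nu^s\restrict\Omega=D^su$; together with the positive $1$-homogeneity of $f^\infty$ in its second argument these give $\int_\Omega\bigl[\dprb{f(x,\frarg),\nu_x}+\dprb{f^\infty(x,\frarg),\nu_x^\infty}\tfrac{\di\lambda_\nu}{\di\Lcal^d}(x)\bigr]\dd x\ge\int_\Omega f(x,\nabla u)\dd x$ and $\int_\Omega\dprb{f^\infty(x,\frarg),\nu_x^\infty}\dd\lambda_\nu^s\ge\int_\Omega f^\infty\bigl(x,\tfrac{\di D^su}{\di\abs{D^su}}\bigr)\dd\abs{D^su}$, hence $\liminf_j\Gcal(u_j)\ge\Gcal(u)$. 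The only delicate point in the whole argument is the one flagged above: in the $f\ge0$ case one must not attempt to compare with $\Fcal$ but instead work with the Young measure generated directly on $\Omega$, exploiting that the mass of $\lambda_\nu$ on $\partial\Omega$ enters with a favourable sign and may simply be thrown away; the remaining manipulations are the routine bookkeeping with the Lebesgue--Radon--Nikod\'{y}m decomposition already carried out in the proof of Theorem~\ref{thm:BV_lsc}.
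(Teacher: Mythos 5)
Your proof is correct and follows the paper's (very terse) argument: reduce to Theorem~\ref{thm:BV_lsc} by observing that the boundary term can be neglected, either because it is a fixed finite constant (fixed traces) or because, re-entering the Young-measure proof, the contribution of $\lambda_\nu^s$ on $\partial\Omega$ is nonnegative and may be dropped before applying the Jensen-type inequalities on $\Omega$. You correctly flag that in the $f\ge 0$ case the naive comparison $\Gcal\le\Fcal$ is useless and one must go back into the proof rather than quote the theorem as a black box, which is exactly the content hidden behind the paper's one-line justification.
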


This follows from Theorem~\ref{thm:BV_lsc} since in all of the above cases the boundary term can be neglected. Note that for signed integrands, the above corollary might be \emph{false}, as can be seen from easy counterexamples.

\begin{remark} \label{rem:YM_free_lsc}
We note that it is also possible to show lower semicontinuity of integral functionals in $\BV(\Omega;\R^m)$ (or relaxation theorems) without the use of Young measures. For example, for the functional
\[
  \Fcal(u) := \int_\Omega f( \nabla u) \dd x + \int_\Omega f^\infty \Bigl(\frac{\di D^s u}{\di \abs{D^s u}} \Bigr) \dd \abs{D^s u},
  \qquad u \in \BV(\Omega;\R^m),
\]
where $f \in \Ebf(\Omega;\R^{m \times d})$ takes only non-negative values, does not depend on $x$, and is quasiconvex, one can follow the proof of lower semicontinuity in~\cite{AmbDal92RBVQ} (reproduced in Section~5.5 of~\cite{AmFuPa00FBVF}) almost completely line-by-line, but replacing the use of Alberti's Rank-One Theorem with the Rigidity Lemma~\ref{lem:BV_rigidity} (or the Local Structure Lemma~\ref{lem:BV_local_structure}). Indeed, in the estimate of the singular part from below, Alberti's Theorem is only used to show that the blow-up limit is one-directional, and we can reach that conclusion also by the aforementioned Rigidity Lemma. The other occurence of Alberti's Rank-One Theorem in that proof concerns the fact that we can use the generalized recession function $f^\#$ instead of requiring the existence of the (strong) recession function $f^\infty$. This, however, cannot be avoided, also cf.\ Remark~\ref{rem:rec_funct}. However, while this alternative proof circumvents the framework of Young measures, it uses other technical results instead (like the De Giorgi--Letta Theorem).
\end{remark}


\providecommand{\bysame}{\leavevmode\hbox to3em{\hrulefill}\thinspace}
\providecommand{\MR}{\relax\ifhmode\unskip\space\fi MR }
\providecommand{\MRhref}[2]{%
  \href{http://www.ams.org/mathscinet-getitem?mr=#1}{#2}
}
\providecommand{\href}[2]{#2}

\end{document}